\newtheorem{theorem}{Theorem}
\newtheorem{lemma}[theorem]{Lemma}
\newtheorem{proposition}[theorem]{Proposition}%
\newtheorem{corollary}[theorem]{Corollary}%
\newtheorem{problem}{Problem}%
\theoremstyle{definition}
\newtheorem{definition}[theorem]{Definition}%
\newtheorem{remark}[theorem]{Remark}
\newtheorem*{remark*}{Remark}
\newcommand\hide[1]{\empty}
\newcommand\todo[1]{ [~ {\color{red} #1 }]}
\renewcommand\todo[1]{\empty}
\newcommand\obsolete[1]{{\color{lightgray}\noindent{\bf Obsolete:} {#1}}}
\renewcommand\obsolete[1]{\empty}
\def\PSpace{\mathrm{PSpace}}
\newcommand\framets[1]{#1}
\def\frF{\framets{F}}
\def\mM{\framets{M}}
\def\tiff{\text{ iff }}
\def\clF{\mathcal{F}}
\def\Log{\myoper{Log}}
\def\Logt{\myoper{Log}\nolimits_\mathtt{t}}
\def\Lt{L_\mathtt{t}}
\newcommand\myoper[1]{\mathop{\myopts{#1}}}
\newcommand\myopts[1]{\mathrm{#1}}
\def\Di{\lozenge}
\def\imp{\rightarrow}
\newcommand\LogicNamets[1]{\logicts{#1}}
\newcommand\logicts[1]{{\textsc{#1}}}
\newcommand\LS[1]{\LogicNamets{S#1}}
\def\vL{L}
\def\Alg{\myoper{Alg}}
\def\clU{\mathcal{U}}
\def\clV{\mathcal{V}}
\def\clC{\mathcal{C}}
\def\clV{\mathcal{V}}
\def\clP{\mathcal{P}}
\def\EE{\exists}
\def\AA{\forall}
\def\restr{{\upharpoonright}}
\def\v{\theta}
\def\vext{\bar{\v}}
\def\vf{\varphi}
\def\mo{\vDash}
\def\emp{\varnothing}
\def\se{\subseteq}
\newcommand\languagets[1]{\logicts{#1}}
\def\PV{\languagets{PV}}
\def\wK4{\logicts{wK4}}
\newcommand{\ff}[1]{\widehat{#1}}
\newcommand{\fm}[1]{\hat{#1}}
\newcommand{\LK}[1]{\logicts{K#1}}
\newcommand\lng[1]{{\ell(#1)}}
\def\Lm{\logicts{L}_m}
\def\Km{\logicts{K}_m}
\def\val{\theta}
\def\valext{\bar{\val}}
\def\at{\mathtt{at}}
\newcommand\ISh[1]{{~\bf IS}: {\color{teal} #1}}
\renewcommand\ISh[1]{\empty}
\newcommand\IShLater[1]{{~\bf IS, later}: {\color{teal} #1}}
\renewcommand\IShLater[1]{\empty}
\newcommand\IS[1]{\ISh{#1}}
\newcommand\AK[1]{{~\bf AK}: {\color{orange}#1}}
\renewcommand\AK[1]{\empty}
\newcommand{\set}[1]{\left\{#1\right\}}
\def\Sub{\myoper{Sub}}
\renewcommand\mod{\,\mathrm{mod}\,}
\newcommand\rem[1]{\langle{#1}\rangle}
\newcommand\lngth[1]{\lng{\vf}}
\begin{document}

\title{Two types of filtrations for $\wK4$ and its relatives}

\affil[1]{\orgname{Higher School of Modern Mathematics MIPT},  \city{Moscow}}
\affil[2]{\orgname{HSE University}, \city{Moscow}}
\affil[3]{\orgname{New Mexico State University}}

\author[1,2]{
\fnm{Andrey}  \sur{Kudinov}
  \email{akudinov@hse.ru}}
\author[3]{
  \fnm{Ilya}   \sur{Shapirovsky}\footnote{The work of this author was supported by NSF Grant DMS - 2231414},
  \email{ilshapir@nmsu.edu}
}

%\date{}
%
%other pretransitive logics

%
%\titlerunning{Abbreviated paper title}
% If the paper title is too long for the running head, you can set
% an abbreviated paper title here
%
%\author{Ilya Shapirovsky
%\inst{1}\orcidID{0000-0001-7434-5894}
%}

%\author*[1,2]{\fnm{First} \sur{Author}}\email{iauthor@gmail.com}

%\author[2,3]{\fnm{Second} \sur{Author}}\email{iiauthor@gmail.com}
%\equalcont{These authors contributed equally to this work.}

%\date{}
% First names are abbreviated in the running head.
% If there are more than two authors, 'et al.' is used.
%
%
\abstract{
We study the finite model property of subframe logics with expressible transitive reflexive\IShLater{why?}
closure modality.
For $m>0$, let $\Lm$ be the logic 
defined
%given  \AK{I don't like word "given" here. "defined" better or "K + ..."} 
by axiom $\Di^{m+1} p\imp \Di p\vee p$.
We construct quotient filtrations for the logics $\Lm$, which implies that these logics and their tense counterparts
have the finite model property. 
%Then we construct selective filtrations of the canonical models of $\Lm$, which implies that all canonical subframe logics that contain $\Lm$ have the finite model property.
%AK replaced:
Then, we construct selective filtrations of the canonical models of $\Lm$, which implies
that all canonical subframe logics containing $\Lm$ have the finite model property.
\IShLater{Discuss the wording}
}

\keywords{
modal logic,  pretransitive logic, subframe logic,
finite model property,
filtration, local tabularity}

\maketitle    %
\section{Introduction}
There are two widely used methods for proving the finite model property of a modal logic $L$. 
One method is based on transforming a given model $M$ of $L$ into its finite quotient, known as a {\em filtration of $M$} \cite[Section 5.3]{CZ}. The other method results in a finite submodel of $M$, which is called a {\em selective filtration of $M$} \cite[Section 5.5]{CZ} or {\em selection} \cite[Section 2.3]{BDV}. 
Moreover, in  many cases  these constructions imply the finite model property for 
related logics, 
in particular for counterparts of $L$ in  extended languages
%\cite{GorankoPassy1992},
%\cite{KikotShapZolAiml2014},
%\cite{BezhanishviliTenKate},
%\cite{KikotShapZolAiml2020}
or for   extensions of $L$ in the same language. %\cite{Fine85}.

We are interested in modal logics with expressible transitive reflexive closure modality. Such logics are said to be {\em pretransitive}. %AK "are said to be" -> "are called"?
In terms of frames, this means that for some fixed finite $m$, in frames of the logic we have
\begin{equation*} \textstyle
%R^{m+1}\subseteq \bigcup\limits_{i\leq m} R^i,
R^{m+1}\subseteq \bigcup_{i\leq m} R^i,
\end{equation*}
\AK{I added "\textbackslash limits" I think it looks better, but I do not insist.}
\IS{OK, but then we need to change the style throughout all the paper. Please do it in the next version, if you like it more, but make sure that the notation is consistent. It is about an hour of work, I think....}
where $R^0$ is the diagonal relation,  $R^{i+1}=R\circ R^i$, and $\circ$ is the composition. We address this property as  {\em $m$-transitivity}.

The simplest non-transitive example of a pretransitive logic is the smallest 1-transitive logic $\wK4$, which is the logic of
{\em weakly transitive relations} given by the condition
$R^2\subseteq R\cup R^0$, or equivalently,  $x R y R z$ implies $xR z$ or $x=z$.
In  \cite{esakia2001weak}, this logic was studied in the context of topological semantics of modal logic; it was shown
that $\wK4$ is the logic of the class of all topological spaces, where $\Di$ is interpreted as the Cantor's derivative operator. %AK added "operator"
%What is it? 
In spite of its similarity to the logic $\LK{4}$  of transitive relations,
$\wK4$ is more difficult to study.
The finite model property of $\wK4$ is 
%given IS. Nov 17
shown
in \cite{GuramEsakiaDavid2009}.
In \cite{MaChen2023}, it was shown %AK double "shown"
that its expansion with the inverse modality (tense expansion) has the finite model property.
%In Theorem \ref{thm:wK4AF}, we give another proof via filtrations.\IShLater{ in the style of Lemmon  and Scott \cite{Lemmon1977-LEMILP}.}

\ISh{Something like this:
...
The case $m>1$ is much more intriguing even for the subframe case. The main results of the paper pertains to the case $m>1$....}
\AK{I agree but I'd better replace "pertains" with "dedicated".}
\IS{That was a draft. Let us discuss it later. }
For $m>1$, the finite model property of the logic of all $m$-transitive relations is an open problem   \cite[Problem 11.2]{CZ}.  \IShLater{formally, different problem}
In \cite{PartitionaingKripkrFrames}, filtrations were constructed for $m$-transitive relations with bounded height of their skeletons.
In \cite{Gabbay:1972:JPL}, filtrations were constructed for  relations satisfying
\begin{equation}\label{eq:prop:gabb}
R^{m+1}\subseteq R;
\end{equation}
these logics can be thought as $m$-transitive generalizations of $\LK{4}$.
As well as weak transitivity,  and unlike $m$-transitivity for $m>1$,
the property \eqref{eq:prop:gabb} is preserved
under taking substructures.  Logics of such classes are called {\em subframe}.
\IShLater{Inaccurate. This is a theorem, see Wolter, page 25, final paragraph.}
\AK{(?)Logics whose classes are closed under taking subframes are called \emph{subframe logics}.}
\IS{Still inaccurate: there are incomplete subframe logics, I believe. Let us double check it later.}

\medskip
\hide{
In this paper, 
We consider $m$-transitive analogs $\Lm$ of the logic $\wK4$:  $\Lm$ is the logic of relations such that $$R^{m+1}\subseteq R\cup R^0.$$
}

For $m>0$, we consider the following $m$-transitive analog 
of weak transitivity: 
$$R^{m+1}\subseteq R\cup R^0;$$ 
$\Lm$ denotes the logic of such relations.  
%$$R^{m+1}\subseteq R\cup R^0.$$ 
%So $\Lm$ are $m$-transitive analog of the logic $\wK4$. 
We give a proof of the finite model property for all logics $\Lm$, 
for their tense expansions, and for  the canonical subframe extensions of $\Lm$.

\smallskip
In Theorem \ref{thm:ADF-Lm}, we describe filtrations for $\Lm$. Hence, these logics have the finite model property and are decidable.
Moreover, their tense expansions have the finite model property:
according to \cite{KSZ:AiML:2014}, if a class of frames admits filtration,  then the tense expansion of its logic has the finite model property.
Our construction is essentially based on the fact that the logic of the class of clusters occurring in $\Lm$-frames is locally tabular  \cite{LocalTab16AiML}.
\IShLater{Tense expansion? BAAD}

Then, we use another approach to study subframe extensions of $\Lm$.
The case $m=1$ is well studied: 
it is known that all subframe extensions of $\LK{4}$ have the finite model property \cite{Fine85}; this result was
generalized for subframe extensions of $\wK4$ in \cite{wk4Subfrae}.
For $m>1$, it is unknown whether all $m$-transitive  subframe logics have the finite model property; \IShLater{Check with Frank}
an axiomatic characterization of $m$-transitive subframe logics (given for $m=1$ in \cite{Fine85} and   \cite{wk4Subfrae}) is also unknown
for  $m>1$.    
We 
construct selective filtrations of the canonical models of $\Lm$ and 
prove that every canonical subframe extension of $\Lm$ has the finite model property. Our construction is essentially based on the maximality property of pretransitive canonical frames:
%An important property of 
it was shown in \cite{Fine85} that in canonical frame of $\LK{4}$, 
every non-empty definable subset has a maximal element \cite{Fine85};
this property generalizes for the pretransitive case   \cite{gliv}. 

\IShLater{DoubleCheck: Gliv is first}
\IShLater{DoubleCheck: no pretransitive counterexampels}

\hide{
Modal logic $\wK4 = K + \Di\Di p \to \Di p \lor p$ is a weaker version of the well known $K4 = K + \Di\Di p \to \Di p$ logic. Both of these logics are closely related to the derivational semantics (d-semantics) which is based on topological spaces where modality $\Di$ is interpreted as the derived set operator. This interpretation was introduced in \cite{McKinseyTarski44} by McKinsey and Tarski. And even earlier in \cite{Kuratowski1922} Kuratowski suggested formulas with the derived operator that should be valid in topological spaces. The Kuratowski paper was purely topological without logic. Unfortunately, Kuratowski and later McKinsey and Tarski were interested in well behaving spaces so they only noticed that $K4$ is valid in $T_1$ spaces and did not look for completeness with respect to the class of all topological spaces. It was Esakia \cite{esakia2001weak} who defined logic $wK4$ and proved completeness w.r.t. all topological spaces. Unfortunately, the proof of finite model property (fmp) for $wK4$ in \cite{esakia2001weak} had a mistake, which was corrected in \cite{GuramEsakiaDavid2009}.

D-semantics is widely used in the studies of topological modal logics (see \cite{}) and in provability logic (see \cite{}) \AK{I am not sure if it is needed, I can look for cites if it does.}.
By adding the symmetry axiom to $wK4$ we get logic $DL$ which is the logic of inequality (or the difference logic). The difference modality is also very useful and is used in the topological semantics to increase expressivity (see \cite{Kudinov2014}).

One way to generalize a transitive modal logic is to consider logics of the form $K + \Di^{m+1} p \to \Di p$ and it was done in \cite{Gabbay:1972:JPL}. Gabbay proved that such logics have fmp and decidable. We will call such logics pretransitive.

In this paper we consider weak pretransitive logics $L_m = K + \Di^{m+1} p \to \Di p \lor p$ and proof that they have fmp and decidable.

The temporal version of $wK4$ was studied in \cite{MaChen2023}. It was proved that the temporal $wK4$ has fmp and decidable. In this paper we  generalize this result and using \cite{Ai} prove that temporal $L_m$ has fmp and decidable..
}

\section{Preliminaries}

We are following standard definitions of the theory of modal logics, see, e.g., \cite{CZ} or \cite{BDV}.
\subsection{Basic notions}

\paragraph*{Language}

The set of {\em modal formulas} is built from
a countable set of {\em variables} $\PV=\{p_0,p_1,\ldots\}$ using Boolean connectives $\bot,\imp$ and a unary connective $\Di$.
Other logical connectives
are defined as abbreviations in the standard way, in particular $\Box\vf$
denotes $\neg \Di \neg \vf$.

\paragraph*{Frames and algebras}

A {\em  (Kripke) frame} is a pair  $\frF=(X,R)$,
where $X$ is a set and $R\se X{\times}X$.
For $a\in X$, $Y\subseteq X$, we put $R(a)=\{b\mid aR b\}$,
$R[Y]=\bigcup_{a\in Y} R(a)$.

A {\em valuation in a frame $F$} is a map $\PV \to \clP(X)$,
where $\clP(X)$ is the set of subsets of $X$.
A {\em (Kripke) model on} $\frF$ is a pair $(\frF,\theta)$, where $\theta$ is a valuation.
The \emph{truth} and \emph{validity} are defined in the usual way, in particular
${(\frF,\theta),a\mo\Di \vf}$ means that  ${(\frF,\theta),b\mo\vf}$  for some $b$ in $R(a)$.
We put
$$\vext(\vf)=\{a\mid (F,\v),a\mo\vf\}.$$
A formula $\vf$ is {\em true in a model $\mM$}, if $\mM,a\mo\vf$ for all $a$ in $\mM$;
it is  {\em valid in a frame $\frF$}, in symbols $\frF\mo\vf$,
if $\vf$ is true in every model on $\frF$.
A formula is {\em valid in a class} $\clF$ of frames, if it is  valid in every frame $\frF\in\clF$.
Validity of a set of formulas means validity of every formula in this set.

A {\em modal algebra} is a Boolean algebra endowed with a unary operation $\Di$  that distributes with respect to finite joins: $\Di (a\vee b)=\Di a\vee \Di b$,  $\Di \bot =\bot$. 
The {\em (modal) algebra $\Alg(F)$ of a frame}  $\frF=(X,R)$ is the
powerset Boolean algebra of $X$ endowed with the unary operation $\Di_R$: for $Y\subseteq X,$
$\Di_R(Y)=R^{-1}[Y].$

\paragraph*{Logics}
A ({\em propositional normal modal}) {\em logic} is a set $\vL$ of formulas
that contains all classical tautologies, the axioms $\neg\Di \bot$  and
$\Di  (p_0\vee p_1) \imp \Di  p_0\vee \Di  p_1$, and is closed under the rules of modus ponens,
substitution and {\em monotonicity}: $\vf\imp\psi\in \vL$ implies $\Di \vf\imp\Di \psi\in \vL$.
For a modal formula $\vf$,  $\vL+\vf$ is the smallest logic that contains $\vL\cup\{\vf\}$. The smallest logic is denoted by $\LK{}$.

An {\em $L$-frame} is a frame where $L$ is valid.

The set $\Log{\clF}$ of all formulas that are valid in a class $\clF$ of frames is called the {\em logic of} $\clF$. It is straightforward that $\Log{\clF}$ is a  normal modal logic.
A logic $L$ is {\em Kripke complete}, if $L$ is the logic of a class of frames.
A logic has the {\em finite model property},  if it is the logic of a class of finite frames.

%A logic $L$ is {\em Kripke complete}, if %$L$ is the logic of a class of frames.
%A logic has the {\em finite model %property},
%if it is the logic of a class of finite %frames.
%\IShLater{Repetition}
%In this paper, we will be interested in the case when a Kripke complete logic  whose class frames  whose class frames are 
A Kripke complete logic is said to be {\em subframe}, if for every $L$-frame $F$ and every non-empty subset $Y$ of $F$, the restriction $F\restr Y=(Y, R \cap (Y\times Y))$ is also an $L$-frame
(in this paper, we do not consider incomplete subframes logics; the general definition of a subframe logic is given in \cite[Section 2.2]{WolterLat1993}).
\IShLater{Check: non-empty subset $Y$}

%ISH: Moved; renamed
\paragraph*{Pretransitivity, skeleton, and clusters}

For a binary relation $R$ on a set $X$,
let $R^*$ denote its transitive reflexive closure $\bigcup_{i <\omega} R^i$, where
where $R^0$ is the diagonal $Id_X=\{(a,a)\mid a\in X\}$ on $X$, $R^{i+1}=R\circ R^i$, and $\circ$ is the composition.

A relation is said to be {\em $m$-transitive}, if
$R^{m+1}\subseteq \bigcup_{i\leq m} R^i$; from this, it follows that $R^*=\bigcup_{i\leq m} R^i$.
The $m$-transitivity  is expressed by the modal formula
$\Di^{m+1} p \imp \bigvee_{i\leq m} \Di^i p$.  A logic $\vL$ is said to be {\em $m$-transitive}, if it contains this formula.
A logic is {\em pretransitive}, if it is $m$-transitive for some $m\geq 0$.

A {\em cluster} in a frame $\frF=(X,R)$ is an equivalence class modulo $\sim_R\; =\{(a,b)\mid a R^* b \text{ and } b R^* a\}$. %\;R^*_\frF\cap {R^*_\frF}^{-1}$.
For clusters $C, D$, put $C\leq_R D$, if $a R^* b$ for some $a\in C, b\in D$.
The poset $(X{/}{\sim_R},\leq_R)$ is called the {\em skeleton of} $\frF$.
For $a\in X$, $[a]_R$ denotes the cluster of $a$.

%The {\em restriction of $\frF$ to its subset $Y$}, $\frF\restr Y$ in symbols, is the structure $(Y, R \cap (Y\times Y))$.
A frame $(C,R)$ is called a {\em cluster frame}, if $R^*\,=\,C\times C$.
Clearly, if $C$ is a cluster in a frame $F=(X,R)$, then the restriction 
$F\restr C$
%$F\restr C=(C, R \cap (C\times C))$ 
is a cluster frame.

\subsection{Filtrations}\label{sec:Filtr}

Let $\sim$ be an equivalence on a set $X$. The equivalence class of $x\in X$ modulo $\sim$ is denoted by $[x]_\sim$.
\hide{
The {\em index of $\sim$ } is the cardinality of the quotient $X/{\sim}$.}
For  equivalences $\sim_1,\sim_2$ on $X$, we say that {\em $\sim_1$ refines $\sim_2$}, if $\sim_1\;\subseteq \;\sim_2$.

For a set $\Gamma$  of formulas,
let $\Sub(\Gamma)$ denote the set of all subformulas of formulas from~$\Gamma$.
We say that $\Gamma$ is \emph{$\Sub$-closed}, if ${\Sub(\Gamma)\se\Gamma}$.

For a model $M = (X, R, \val)$ and a set  $\Gamma$ of formulas, put
$$ 
\text{$a\sim_{\Gamma} b$ \quad iff \quad$\forall \psi\in\Gamma\; (M,a\models \psi \tiff M,b\models \psi)$.}
$$
The equivalence $\sim_\Gamma$ is said to be {\em induced by $\Gamma$ in $M$}.

\smallskip

\begin{definition}\label{def:epi}
Let $\Gamma$ be a $\Sub$-closed set of formulas,
\IShLater{Do we need it? Yes, even in the definition: otherwise maximal is not well-defined}
$M=(X,R,\val)$ a model.

A {\em filtration of $M$ through $\Gamma$}
is a model $\ff{M}=(\ff{X},\ff{R},\ff{\theta})$
such that
\begin{enumerate}
\item $\ff{X}=X/{\sim}$ for some equivalence relation $\sim$, which refines $\sim_\Gamma$.
\item ${\ff{M},[a]\models p}$ iff ${M,a\models p}$, for all variables $p\in \Gamma$;
here $[x]$ is the $\sim$-class of $x$.
\item $R_{\sim} \subseteq \ff{R} \subseteq  R_{\sim,\Gamma}$, where
$$
\begin{array}{ccl}
~[a]\,{R}_\sim\,[b] & \text{iff} & \exists a'\sim a\ \exists b'\sim
b\;
(a'\,R\,b'),
\smallskip \\
~[a]\,{R}_{\sim,\Gamma}\,[b] & \text{iff} & \forall \vf\;
   (\Di \vf\in \Gamma \: \& \: M,b\models\vf \Rightarrow M,a\models \Di \vf ).
\end{array}
$$
\end{enumerate}
The relations $R_\sim$ and $R_{\sim,\Gamma}$ on $\ff{X}$ are called
the \emph{minimal} and the \emph{maximal filtered relations}, respectively.
\end{definition}

The following fact is standard:
\begin{lemma}[Filtration lemma]\label{Lemma:Filtration}
Suppose that $\Gamma$ is a
$\Sub$-closed set of formulas
and $\ff{M}$ is a $\Gamma$-filtration of a model~$M$. 
Then, for all points $a$ in $M$ and all formulas ${\varphi \in\Gamma}$,
we have: \IShLater{Why finite? Removed; DC}
\begin{center}
${M,a\models \varphi}$ iff ${\ff{M},[a]\models\varphi}$.
\end{center}
\end{lemma}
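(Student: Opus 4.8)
The plan is to prove the Filtration Lemma by induction on the structure of $\vf \in \Gamma$, exploiting the fact that $\Gamma$ is $\Sub$-closed, so every subformula encountered during the induction again lies in $\Gamma$ and the induction hypothesis applies to it. First I would set up the induction: for the base case where $\vf$ is a variable $p$, the equivalence follows directly from condition (2) of Definition \ref{def:epi}; for $\vf = \bot$, both sides are false. The Boolean cases $\vf = \psi_1 \imp \psi_2$ are routine, since $\psi_1, \psi_2 \in \Sub(\Gamma) \se \Gamma$, and truth of an implication at a point is determined by the truth values of its immediate subformulas, to which the induction hypothesis applies on both sides.

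The modal case $\vf = \Di \psi$ is where the real content sits, and it is the step I expect to be the main obstacle. Here I would prove the two directions separately, using the two inclusions $R_\sim \se \ff R \se R_{\sim,\Gamma}$ from condition (3). For the implication ${M,a\mo \Di\psi} \Rightarrow {\ff M, [a] \mo \Di\psi}$, suppose $M,b\mo\psi$ for some $b \in R(a)$. Then $[a]\, R_\sim\, [b]$ by definition (witnessed by $a'=a$, $b'=b$), hence $[a]\,\ff R\,[b]$ since $R_\sim \se \ff R$; and since $\psi \in \Gamma$, the induction hypothesis gives ${\ff M, [b]\mo\psi}$, so ${\ff M,[a]\mo\Di\psi}$. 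For the converse ${\ff M,[a]\mo\Di\psi} \Rightarrow {M,a\mo\Di\psi}$, suppose ${\ff M,[b]\mo\psi}$ for some $[b]$ with $[a]\,\ff R\,[b]$. By the induction hypothesis, ${M,b\mo\psi}$. Crucially, since $\Gamma$ is $\Sub$-closed and $\Di\psi \in \Gamma$, we have $\psi \in \Gamma$, so the antecedent of the maximal-relation clause is triggered: from $\ff R \se R_{\sim,\Gamma}$ and $[a]\,\ff R\,[b]$, instantiating the universal clause $R_{\sim,\Gamma}$ with the very formula $\psi$ (together with $\Di\psi\in\Gamma$ and $M,b\mo\psi$) yields $M,a\mo\Di\psi$.

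The key observation making the converse direction work is precisely why $\Sub$-closedness of $\Gamma$ is indispensable: it guarantees both that $\psi \in \Gamma$ so the induction hypothesis applies, and that $\Di\psi \in \Gamma$ so the formula $\psi$ qualifies as a legal witness in the defining condition of $R_{\sim,\Gamma}$. Without either membership the argument breaks down. I would note that this lemma is entirely general — it makes no use of any frame conditions such as $m$-transitivity — and its role in the paper is as the standard bridge ensuring that whatever quotient model $\ff M$ we later construct (verifying (1)--(3)) faithfully preserves the truth of all formulas in $\Gamma$; the subsequent work lies in constructing $\ff M$ so that its frame validates $\Lm$, not in this preservation statement itself.
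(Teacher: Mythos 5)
Your proof is correct and is exactly the argument the paper intends: the paper's proof of Lemma \ref{Lemma:Filtration} is simply ``Straightforward induction on $\vf$,'' and your write-up fills in that standard induction, using condition (2) for the base case, $R_\sim \se \ff R$ for the forward modal direction, and $\ff R \se R_{\sim,\Gamma}$ together with $\Sub$-closedness for the converse. No discrepancy with the paper's approach.
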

\begin{proof}
Straightforward induction on $\vf$.
\end{proof}

\begin{proposition}\label{prop:min-included-in-max}
 For a $\Sub$-closed set $\Gamma$ of formulas and a model $M=(X,R,\val)$,
 for any equivalence $\sim$ which refines $\sim_\Gamma$,
we have $R_\sim \, \subseteq \,R_{\sim,\Gamma}$.
\end{proposition}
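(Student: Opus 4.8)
The plan is to simply unfold both filtered relations from Definition~\ref{def:epi} and chase truth values back and forth across the equivalence $\sim$. Assume $[a]\,R_\sim\,[b]$; by the definition of the minimal filtered relation there exist $a'\sim a$ and $b'\sim b$ with $a'\,R\,b'$. To establish $[a]\,R_{\sim,\Gamma}\,[b]$, I would fix an arbitrary formula $\vf$ such that $\Di\vf\in\Gamma$ and $M,b\mo\vf$, and aim to derive $M,a\mo\Di\vf$.

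First I would invoke $\Sub$-closedness of $\Gamma$: since $\Di\vf\in\Gamma$ and $\vf$ is a subformula of $\Di\vf$, we get $\vf\in\Gamma$, so $\vf$ is among the formulas defining $\sim_\Gamma$. Next, since $\sim$ refines $\sim_\Gamma$, that is $\sim\;\se\;\sim_\Gamma$, the relations $a\sim a'$ and $b\sim b'$ also give $a\sim_\Gamma a'$ and $b\sim_\Gamma b'$. As $\vf\in\Gamma$ and $M,b\mo\vf$, the defining property of $\sim_\Gamma$ transfers truth of $\vf$ from $b$ to $b'$, yielding $M,b'\mo\vf$. From $a'\,R\,b'$ together with $M,b'\mo\vf$, the semantics of $\Di$ gives $M,a'\mo\Di\vf$. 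Finally, since $\Di\vf\in\Gamma$ and $a\sim_\Gamma a'$, truth of $\Di\vf$ transfers back from $a'$ to $a$, so $M,a\mo\Di\vf$, as required.

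The argument is entirely routine, and I expect no genuine obstacle. The only point that needs care is the appeal to $\Sub$-closedness: it is precisely what guarantees $\vf\in\Gamma$, which in turn licenses using $\sim_\Gamma$ to move the witness from $b$ to $b'$ and the conclusion from $a'$ back to $a$. Without $\Sub$-closedness this transfer would fail, so this hypothesis is exactly the hinge of the proof.
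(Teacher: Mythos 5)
Your proof is correct and follows essentially the same route as the paper's: pick representatives $a'\,R\,b'$, use $\Sub$-closedness to get $\vf\in\Gamma$, transfer truth of $\vf$ from $b$ to $b'$ via $\sim\;\se\;\sim_\Gamma$, read off $M,a'\mo\Di\vf$ from the semantics, and transfer $\Di\vf$ back to $a$. Your closing remark correctly isolates $\Sub$-closedness as the hinge of the argument.
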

\begin{proof}
Let $U_1, U_2\in X/{\sim}$, $a_1\, \in U_1$, $a_2 \in U_2$, and $a_1 R a_2$.
Assume $\Di\vf\in\Gamma$. Then $\vf\in \Gamma$.
Assume $M,b\mo\vf$ for some $b\in U_2$. Since $\sim$ refines $\sim_\Gamma$, we have $M,a_2\mo \vf$. Then  $M,a_1\mo\Di\vf$, and so
 $M,a\mo \Di\vf$ for all $a\in U_1$.
Hence $U_1 R_{\sim,\Gamma} U_2$.
\end{proof}

Let $\clF$ be a class of frames. By an {\em $\clF$-model} we mean $M=(F,\theta)$ with $F\in \clF$.

\smallskip

\begin{definition}
A class $\clF$  of frames \emph{admits filtration}
if, for every finite $\Sub$-closed set of formulas~$\Gamma$
and every $\clF$-model $M$,
there exists a finite filtration $\ff{M}$ of $M$ through $\Gamma$ based on a frame in $\clF$.%\ISh{Corrected after submission}
\end{definition}

\smallskip

Clearly, if $\clF$ admits filtration, then its logic has the finite model property.

For many logics, filtrations based on the initial equivalence $\sim_\Gamma$
work well: this is the most standard application of the method. \IShLater{Refs } For example, for the case  of transitive frames,
 the transitive closure of the minimal filtered relation on the quotient $X/{\sim_\Gamma}$ will give a filtration.
%But this is not the case in general, and

In general, however,  letting the equivalence $\sim$ be finer than $\sim_\Gamma$ gives much more flexibility for the method of filtration.
As an important illustration, consider the logics
 $\Km=\LK{}+\Di^{m+1} p \to \Di p$.
Filtrations for these logics were constructed in \cite{Gabbay:1972:JPL}.
 The class of $\Km$-frames is characterized by the condition
 \begin{equation}\label{eq:Gabbay}
R^{m+1}\subseteq R.
 \end{equation}
For a relation $S$, define its {\em $m$-closure} as $\bigcup_{i<\omega} S^{mi+1}.$
It can be shown that the $m$-closure of a relation $S$ is the least relation $R$ that contains $S$ and satisfies \eqref{eq:Gabbay}.
For a model on a $\Km$-frame and $\Gamma$, to build a filtration, consider the equivalence
induced by $\Delta=\Gamma\cup \{\Di^i\vf \mid i\leq m\, \&\, \Di\vf \in \Gamma\}$; the filtered relation is defined as
the $m$-closure of the minimal filtered relation, see \cite{Gabbay:1972:JPL} for the details.

\IShLater{Segerberg and $\LS{4.1}$}

In \cite{KSZ:AiML:2014,KikotShapZolAiml2020}, filtrations in the sense of Definition \ref{def:epi} were used to transfer
the finite model property for logics in extended languages, in particular for tense expansions.

\subsection{Tense expansions}
\IShLater{more rigorous way; more refs}

Consider the propositional language with two modalities, $\Di$ and $\Di^{-1}$.
%The notion of normal bimodal logic is defined in a complete analogy to the unimodal case, see, e.g., \cite{BDV}.

For a frame $(X,R)$, the modality $\Di^{-1}$ is interpreted by the inverse $R^{-1}=\{(a,b)\mid (b,a)\in R\}$ of
$R$.  The set of bimodal formulas that are valid in a class $\clF$ of frames is denoted $\Logt(\clF)$. 

\begin{theorem}\cite{KSZ:AiML:2014}\label{thm:transfer-temporal}
If a class $\clF$ of frames admits filtration, then $\Logt(\clF)$ has the finite model property.
\end{theorem}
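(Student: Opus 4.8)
The plan is to lift the finite model property from the unimodal setting to the tense language by showing that every tense formula satisfiable in an $\clF$-model is satisfiable in a finite model whose frame lies in $\clF$; since every frame in $\clF$ validates $\Logt(\clF)$, this yields the finite model property. Suppose $\chi$ is a tense formula with $M,x_0\models\chi$ for some $\clF$-model $M=(F,\theta)$ and some point $x_0$. First I would fix a finite $\Sub$-closed set $\Gamma$ containing all tense subformulas of $\chi$; the delicate point, addressed below, is that $\Gamma$ must be chosen slightly larger than $\Sub(\chi)$. Applying the hypothesis that $\clF$ admits filtration to $\Gamma$ and $M$, I obtain a finite filtration $\ff{M}=(\ff{X},\ff{R},\ff{\theta})$ through $\Gamma$ with $(\ff{X},\ff{R})\in\clF$. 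I then interpret the inverse modality $\Di^{-1}$ in $\ff{M}$ by the relation $\ff{R}^{-1}$ and prove a tense version of the Filtration Lemma: $M,a\models\varphi$ iff $\ff{M},[a]\models\varphi$ for every $\varphi\in\Gamma$. Evaluating this at $\varphi=\chi$ and $a=x_0$ finishes the argument.

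The core is the induction in this tense Filtration Lemma, where two of the four modal cases are routine and two require care. For the existence directions, $M,a\models\Di\varphi\Rightarrow\ff{M},[a]\models\Di\varphi$ and $M,a\models\Di^{-1}\varphi\Rightarrow\ff{M},[a]\models\Di^{-1}\varphi$, I would use only the inclusion $R_\sim\subseteq\ff{R}$ together with the identity $(R_\sim)^{-1}=(R^{-1})_\sim$, which shows that the minimal filtered relation is symmetric enough to supply backward witnesses as well as forward ones. The universal direction for $\Di$, namely $\ff{M},[a]\models\Di\varphi\Rightarrow M,a\models\Di\varphi$, follows directly from $\ff{R}\subseteq R_{\sim,\Gamma}$ by the defining clause of the maximal filtered relation, exactly as in the unimodal Filtration Lemma.

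The main obstacle is the remaining universal direction for the inverse modality, $\ff{M},[a]\models\Di^{-1}\varphi\Rightarrow M,a\models\Di^{-1}\varphi$: the maximal filtered relation $R_{\sim,\Gamma}$ is defined purely in terms of forward formulas $\Di\psi\in\Gamma$, so $[b]\,\ff{R}\,[a]$ gives no direct control over backward-looking formulas. To overcome this I would enlarge the filtration set, taking $\Gamma$ to be the $\Sub$-closure of $\Sub(\chi)\cup\{\Di\neg\Di^{-1}\varphi\mid \Di^{-1}\varphi\in\Sub(\chi)\}$; this is still finite and $\Sub$-closed, and since I only need the tense Filtration Lemma for formulas of $\Sub(\chi)$, a single layer of these conjugate formulas suffices. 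With this choice the argument runs by contraposition: assuming $M,a\not\models\Di^{-1}\varphi$ we have $M,a\models\neg\Di^{-1}\varphi$, and since $\Di\neg\Di^{-1}\varphi\in\Gamma$, the inclusion $[b]\,\ff{R}\,[a]\subseteq R_{\sim,\Gamma}$ forces $M,b\models\Di\neg\Di^{-1}\varphi$. But at $b$ we have $M,b\models\varphi$ (from $\ff{M},[b]\models\varphi$ by the induction hypothesis), and the tense-valid formula $\varphi\to\Box\Di^{-1}\varphi$ then gives $M,b\models\Box\Di^{-1}\varphi$, contradicting $M,b\models\Di\neg\Di^{-1}\varphi$. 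Thus the conjugate formulas precisely encode the interaction axiom $\varphi\to\Box\Di^{-1}\varphi$ and let the forward maximal relation govern the backward modality.

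Finally, I would assemble the pieces: $\ff{M}$ is finite, its frame lies in $\clF$ and hence validates $\Logt(\clF)$, and $\ff{M},[x_0]\models\chi$; applying this to $\neg\chi$ for any non-theorem $\chi$ shows that every formula outside $\Logt(\clF)$ is refuted on a finite frame of $\clF$, which is the finite model property for $\Logt(\clF)$. The step I expect to demand the most bookkeeping is verifying that the enlarged $\Gamma$ remains finite and $\Sub$-closed while staying small enough that conjugates of conjugates are never required.
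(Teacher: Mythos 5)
The paper does not prove this theorem; it is imported from \cite{KSZ:AiML:2014}, so there is no in-paper proof to compare against. Judged on its own merits, your argument has the right engine: the identity $(R_\sim)^{-1}=(R^{-1})_\sim$ handles the existence direction for $\Di^{-1}$, and the conjugate formulas $\Di\neg\Di^{-1}\vf$ together with the contraposition through the tense validity $\vf\imp\Box\Di^{-1}\vf$ correctly let the \emph{forward} maximal relation $R_{\sim,\Gamma}$ control the \emph{backward} modality. That is essentially the mechanism behind the cited result.

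There is, however, a genuine gap in how you invoke the hypothesis. ``$\clF$ admits filtration'' quantifies over finite $\Sub$-closed sets of formulas of the \emph{unimodal} language (the language in which Definition \ref{def:epi} and the notion of admitting filtration are stated), whereas your $\Gamma$ contains $\Di^{-1}\vf$ and $\Di\neg\Di^{-1}\vf$. The hypothesis simply does not supply a filtration through such a set: the construction witnessing admissibility (e.g.\ the one in Theorem \ref{thm:ADF-Lm}) is only guaranteed for unimodal input, so the step ``applying the hypothesis to $\Gamma$ and $M$'' is not licensed. The repair is standard but must be made explicit: observe that every ingredient of Definition \ref{def:epi} --- the equivalence $\sim_\Gamma$, condition 2, and the maximal relation $R_{\sim,\Gamma}$ --- depends on the members of $\Gamma$ only through their truth-sets in $M$. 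So introduce a fresh variable $q_\psi$ for each $\psi\in\Sub(\chi)$ and for each conjugate $\neg\Di^{-1}\vf$, extend $\theta$ by $\theta(q_\psi)=\valext(\psi)$, and apply the hypothesis to the resulting $\clF$-model and the finite $\Sub$-closed \emph{unimodal} set $\{q_\psi\}\cup\{\Di q_\psi\}$. Condition 1 then guarantees that $\sim$ refines the bimodal $\sim_{\Sub(\chi)}$ (needed for the basis and the induction hypothesis of your tense filtration lemma), and condition 3 for $\Di q_{\neg\Di^{-1}\vf}$ yields exactly the instance of the maximal relation your contraposition argument uses. With this rerouting your proof is correct, and your closing worry about ``conjugates of conjugates'' disappears, since only one layer of surrogates is ever needed.
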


The {\em tense expansion $\Lt$ of a logic $L$} 
is defined as
the smallest normal  logic in the language of $\Di$ and $\Di^{-1}$ that contains $L$ and 
two extra axioms 
\begin{equation}\label{eq:tenseAxioms}
p\imp \Box\Di^{-1} p, \quad p\imp \Box^{-1}\Di p, 
\end{equation}
where $\Box^{-1}$ abbreviates $\neg \Di^{-1}  \neg$.
It is well known that if $\Lt$ is Kripke complete, then 
$$\Lt=\Logt(\clF),$$ where $\clF$ is the class of frames of $\vL$. In particular, this is the case for a canonical $L$, since 
\eqref{eq:tenseAxioms} are canonical formulas.

It follows from Theorem \ref{thm:transfer-temporal} that if the class of frames of a finitely axiomatizable logic $L$ admits filtration, and $\Lt$ is Kripke complete,  then $\Lt$ 
is decidable.

%\subsection{\texorpdfstring{$\wK4$}{wK4} and its relatives}

%\begin{theorem}\label{thm:wk4fmp}
%$\wK4$ has the fmp.
%\end{theorem}

%\input{filtr-partitions}

\section{Filtrations for the logics \texorpdfstring{$\Lm$}{Lm}}\label{sec:wk4}

\IS{May-June 2025: checked everything}

%\subsection{Filtrations for $\wK4$ on frames}
For $m>0$, 
let $\Lm$ be the logic $\LK{}+\Di^{m+1} p \imp \Di p\vee p$.  %For $m=0$, this is the smallest logic $\LK{}$. 
For $m=1$, $\Lm$ is the logic $\wK4$.

It is straightforward that $(X,R)$ validates $\Lm$
iff
\begin{equation}\label{eq:m-property}
R^{m+1} \subseteq \;R\cup Id_X .
\end{equation}
The formulas $\Di^{m+1} p \imp \Di p\vee p$ are Sahlqvist \cite[Section 10.3]{CZ}. Hence, we have: \IShLater{refs; defs}

\smallskip 

\begin{proposition}
All $\Lm$ are canonical and hence are Kripke complete. 
\end{proposition}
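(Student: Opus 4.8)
The plan is to deduce the statement directly from the Sahlqvist theorem, which is precisely why the axioms were identified as Sahlqvist formulas immediately above. I would recall the two standard ingredients \cite[Sections 10.3 and 5]{CZ}: (i) every Sahlqvist formula is \emph{canonical}, i.e.\ valid on the canonical frame of any normal modal logic that contains it; and (ii) any logic valid on its own canonical frame is Kripke complete. Given these, the argument is essentially a uniform citation, identical for every $m$, but let me lay out the two steps.

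For canonicity, fix $m>0$ and form the canonical frame $F_{\Lm}$ of $\Lm$ from the maximal $\Lm$-consistent sets with the usual canonical accessibility relation. Each axiom $\Di^{m+1} p \imp \Di p\vee p$ is Sahlqvist, so by (i) each such formula is valid on $F_{\Lm}$; its first-order correspondent is exactly the condition \eqref{eq:m-property}, so $F_{\Lm}$ in fact satisfies $R^{m+1}\subseteq R\cup Id_X$. The remaining axioms and rules defining $\LK{}$ hold on every canonical frame, hence every axiom of $\Lm$ is valid on $F_{\Lm}$, i.e.\ $F_{\Lm}\mo\Lm$; this is exactly the assertion that $\Lm$ is canonical. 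For completeness I would invoke the truth lemma for the canonical model $M_{\Lm}=(F_{\Lm},\theta)$: a formula lies in a maximal consistent set $w$ iff it is true at $w$ in $M_{\Lm}$. Consequently any $\vf\notin\Lm$ is omitted from some maximal consistent set and thus refuted at a point of $M_{\Lm}$, a model over the frame $F_{\Lm}$ which validates $\Lm$. Hence $\Lm=\Log{\clG}$ for the class $\clG=\{F_{\Lm}\}$ of $\Lm$-frames, so $\Lm$ is Kripke complete.

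There is no genuine obstacle here: the only nonroutine point is the verification that $\Di^{m+1} p \imp \Di p\vee p$ is in Sahlqvist form, and this is already asserted above. Concretely, the antecedent $\Di^{m+1}p$ is a diamond prefix applied to the boxed atom $p$, hence a Sahlqvist antecedent, while the consequent $\Di p\vee p$ is positive in $p$; so the implication is Sahlqvist. Everything else is the mechanical application of the Sahlqvist canonicity theorem together with the standard fact that canonicity entails Kripke completeness, applied uniformly across all $m>0$.
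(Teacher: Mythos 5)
Your proof is correct and follows exactly the paper's route: the paper likewise observes that $\Di^{m+1}p\imp\Di p\vee p$ is a Sahlqvist formula and derives canonicity (hence Kripke completeness) directly from the Sahlqvist theorem. Your additional spelling-out of why the axiom is in Sahlqvist form and of the canonical-model completeness argument is accurate but just makes explicit what the paper leaves as a citation.
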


\smallskip 

We will show that for each $m$, the class of $\Lm$-frames admits filtration. The general case requires a number of technical steps.
The proof for the  case $m=1$ is much simpler, but illustrates the idea well, so first we provide 
a separate proof for logic $\wK4$.

\subsection{Filtrations for \texorpdfstring{$\wK4$}{wK4}}

Observe that in any $\wK4$-frame $(X,R)$, we have: 
\begin{equation}\label{eq:basicWk4}
\text{if $a R^*b$ and $[a]_R\neq [b]_R$, then $a R b$.}
\end{equation}

For a relation $R$, let $R^+$ be the transitive closure of $R$:  $R^+=\bigcup_{0<i<\omega} R^i$.

\smallskip

\begin{theorem}\label{thm:wK4AF}
The class of $\wK4$-frames admits filtration.
\end{theorem}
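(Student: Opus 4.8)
The plan is to construct a filtration by choosing an equivalence $\sim$ finer than $\sim_\Gamma$, exactly as the paper previewed for $\Km$, and then define the filtered relation so that the $m$-transitivity condition \eqref{eq:m-property} (here with $m=1$, i.e. weak transitivity $R^2 \subseteq R \cup Id_X$) is preserved on the finite quotient. The key structural fact I would exploit is \eqref{eq:basicWk4}: in a $\wK4$-frame, any two points lying in *distinct* clusters that are $R^*$-related are in fact directly $R$-related. This means that, up to the diagonal, the reachability relation $R^*$ is already essentially $R$ once we leave a cluster — so the only place where weak transitivity can "fail to be transitivity" is inside a single cluster.

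Let me verify the approach on the structure of the problem. Given a finite $\Sub$-closed $\Gamma$ and a $\wK4$-model $M=(X,R,\theta)$, I would first enrich the separating set. The natural move is to refine $\sim_\Gamma$ so that points are distinguished not only by the formulas they satisfy but also by whether they are reflexive points and by enough information to detect cluster membership. Concretely, I would take the equivalence induced by $\Gamma$ together with auxiliary data recording, for each $\Di\vf \in \Gamma$, the truth values along short $R$-paths — the same trick as the $\Km$ case, where one separates by $\Delta = \Gamma \cup \{\Di^i\vf \mid i\le m \,\&\, \Di\vf\in\Gamma\}$. With $m=1$ this adds little, but it guarantees the maximal relation behaves correctly across cluster boundaries.

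The core of the argument is then to define $\ff{R}$ and check the three clauses of Definition~\ref{def:epi} plus validity of $\wK4$ on $(\ff{X},\ff{R})$. I would set $\ff{R}$ to be (a suitable modification of) the transitive closure $R_\sim^+$ of the minimal filtered relation, possibly removing the diagonal where the underlying points are irreflexive, so as to respect weak transitivity rather than full transitivity. The sandwiching $R_\sim \subseteq \ff{R} \subseteq R_{\sim,\Gamma}$ follows from Proposition~\ref{prop:min-included-in-max} for the lower bound and from a short induction (using $\Sub$-closure and the enrichment of the separating set) for the upper bound; the Filtration Lemma~\ref{Lemma:Filtration} then applies automatically. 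The point that needs genuine care is that $(\ff{X},\ff{R})$ validates $\wK4$, i.e. $\ff{R}^2 \subseteq \ff{R}\cup Id$: I must show that any length-two $\ff{R}$-path either closes up to a single $\ff{R}$-step or returns to its start. Here \eqref{eq:basicWk4} is decisive — if the two steps cross a cluster boundary in the original frame, directness gives the $R$-edge, whereas the genuinely delicate sub-case is when both witnesses stay within one cluster and the endpoints coincide under $\sim$, which is where the diagonal term $Id$ gets used.

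\textbf{The main obstacle} I anticipate is precisely this reflexivity bookkeeping: transitive closure would turn irreflexive-but-nontrivial clusters into reflexive ones on the quotient, destroying weak transitivity, so $\ff{R}$ cannot simply be $R_\sim^+$. The fix is to track reflexivity as part of $\sim$ and to add diagonal loops to $\ff{R}$ only at those quotient points all of whose (or a suitable witness among whose) preimages are reflexive, while checking that this still keeps $\ff{R}$ below $R_{\sim,\Gamma}$. Making this choice consistent — reflexive enough to satisfy the upper bound and the Filtration Lemma, yet restrained enough that $\ff{R}^2\subseteq \ff{R}\cup Id$ holds — is the crux, and I expect it to be exactly the simplification that the authors single out as illustrating the idea before the full $m>1$ construction.
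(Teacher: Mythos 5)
There is a genuine gap here: you have located the main difficulty in the wrong place, and the refinement you actually propose is vacuous. For $m=1$ the set $\Delta=\Gamma\cup\{\Di^i\vf\mid i\le 1\ \&\ \Di\vf\in\Gamma\}$ is just $\Gamma$ (which is $\Sub$-closed), so your concrete equivalence is $\sim_\Gamma$ plus some reflexivity bookkeeping, and that is not enough. The reflexivity issue is in fact minor: the paper dispatches it in one line by setting $\ff{R}=((R_\equiv)^+\setminus Id)\cup(R_\equiv\cap Id)$, whose reflexive closure is transitive, so $(\ff{X},\ff{R})$ validates $\wK4$ automatically (and note the diagonal part is $R_\equiv\cap Id$, not loops at points all of whose preimages are reflexive --- your more restrictive choice would even violate the lower bound $R_\equiv\subseteq\ff{R}$). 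The real obstacle is the one you flag as ``delicate'' and then leave unresolved: proving that the transitive closure stays inside the \emph{maximal} filtered relation. Unwinding a two-step path gives $M,a'\models\Di\Di\vf$, and weak transitivity only yields a witness $d$ with $a'Rd$ \emph{or} $a'=d$; in the second case you learn $M,a'\models\vf$, not $M,a'\models\Di\vf$, and no amount of reflexivity tracking or of recording $\Di^i\vf$-values at the point itself repairs this.

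The idea you are missing is the cluster-type equivalence. The paper sets $\Gamma(a)=\{\vf\in\Gamma\mid M,a\models\vf\}$ and $S(C)=\{\Gamma(a)\mid a\in C\}$ for a cluster $C$, and refines $\sim_\Gamma$ by $a\approx b$ iff $S([a]_R)=S([b]_R)$: points are separated by the finite invariant recording \emph{which $\Gamma$-types are realized somewhere in their cluster}, not by extra formulas true at the point. This is exactly what closes the induction for $\ff{R}\subseteq R_{\equiv,\Gamma}$. If $a\approx b$, the type $\Gamma(b)$ is realized by some $c$ in $a$'s own cluster with $c\neq a$ (since $[a]\neq[b]$ forces $\Gamma(a)\neq\Gamma(b)$), whence $aRc$ and $M,a\models\Di\vf$. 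If $a\not\approx b$, one locates the first edge $uRv$ along the path where the cluster type changes, concludes $[u]_R\neq[v]_R$, applies \eqref{eq:basicWk4} to get $M,c\models\Di\vf$ for \emph{every} $c\in[u]_R$, hence $\Di\vf$ belongs to every type in $S([u]_R)=S([a]_R)$, in particular to $\Gamma(a)$. Without an equivalence carrying this cluster-level information, the upper-bound induction breaks at precisely the sub-case you identified, so the proposal as written does not yield a proof.
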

\begin{proof}\ISh{Carefully read on May 28, 2025}
Let $M=(F,\theta)$ be a model on a $\wK4$-frame $F=(X,R)$, $\Gamma$ a finite set of formulas closed under taking subformulas.

For a point $a$ in $M$, let
$\Gamma(a)=\{\vf\in \Gamma \mid M,a\mo\vf \}.$
%On $X$, put $$a\sim b \tiff \Gamma(a)=\Gamma(b).$$
For a cluster $C$ in $F$, consider the set $S(C)=\{\Gamma(a)\mid a\in C\}$.
For $a,b\in X$, put
$$a\approx b \textrm{ iff } S([a]_R)=S([b]_R).$$
Let
$\equiv$ be the intersection $\sim_\Gamma\cap \approx$, that is
%Let $a,b$ be in $M$, $C,D$ be the clusters of $a$ and $b$, respectively.
$$a\equiv b \tiff  (\Gamma(a)=\Gamma(b) \textrm{ and } S([a]_R)=S([b]_R)).$$
%So $\equiv$ is the intersection of $\sim_\Gamma$ and
Put $\ff{X}=X{/}{\equiv}$. 
Let $[a]$ denote the $\equiv$-class of $a\in X$.
Observe that $\ff{X}$ is finite. \IShLater{ details?} 

Consider the minimal filtered relation $R_\equiv$ on $\ff{X}$.
Put $$\ff{R}=((R_\equiv)^+\setminus Id_{\ff{X}}) \cup (R_\equiv\cap Id_{\ff{X}}).$$
Since the reflexive closure of $\ff{R}$ is transitive, $(\ff{X},\ff{R})\text{ validates } \wK4.$ \IShLater{Reflexive closure - undefined}
Clearly, $\ff{R}$ contains $R_\equiv$.  

It remains to check that $\ff{R}$ is contained in the maximal filtered relation $R_{\equiv,\Gamma}$.

Since $(R_\equiv\cap Id_{\ff{X}})\,\subseteq R_\equiv$ and $R_\equiv\,\subseteq \, R_{\equiv,\Gamma}$, we have
\begin{equation}\label{eq:fr:max-a}
  R_\equiv\cap Id_{\ff{X}}\, \subseteq \,R_{\equiv,\Gamma}.
\end{equation}
 
Now by induction on $n$ we show that for all $n>0$,
\begin{equation}\label{eq:fr:max-b}
(R_\equiv)^n\setminus Id_{\ff{X}}\, \subseteq \, R_{\equiv,\Gamma}.
\end{equation}
The basis $n=1$ is immediate, since
$(R_\equiv\setminus Id_{\ff{X}})\,\subseteq R_\equiv\,\subseteq \, R_{\equiv,\Gamma}$. \IShLater{Start with $n=0$.}

Let $n>1$.
Suppose that $[a](R_\equiv)^n [b]$, $[a]\neq [b]$, $\Di\vf \in \Gamma$, and $M,b\mo \vf$.
We need to show that $M,a\mo\Di\vf$.

First, assume that $a\approx b$.
We have $\vf \in \Gamma(b)\in S([b]_R)=S([a]_R)$, so $\Gamma(c)=\Gamma(b)$ for some $c \in [a]_R$.
Since $[a]\neq [b]$, $\Gamma(a)\neq \Gamma(b)$. So $\Gamma(c)\neq \Gamma(a)$, and hence $a\neq c$.
Thus $aRc$.
We have $M,c\mo \vf$, so $M,a\mo \Di \vf$.

Now assume that $a\not\approx b$.
Since $[a] (R_\equiv)^n [b]$,
for some $u, v$
we have:  $a\approx u$, $a\not \approx v$, 
$[a] (R_\equiv)^l [u]$,  $uRv$, and $[v] (R_\equiv)^k  [b]$, 
%IS fixed typo $[v] (R_\equiv)^k  b$, 
where 
% AK deleted $l,k<n$. 
%AK added:
$l+k+1 = n$ (in the $R_\equiv$-path from $[a]$ to $[b]$, $[v]$ is the first $\equiv$-class
such that $a\not \approx v$).  
Then $v\not \approx u$, and so $[v]_R\neq [u]_R$.   

If $[v] = [b]$, then $M,v\mo \vf$.
If $[v] \neq [b]$, then $([v],[b])\in (R_\equiv)^{k}\setminus Id_{\ff{X}}$, and by induction
hypotheses, $[v] R_{\equiv,\Gamma} [b]$; in this case, $M,v \mo \Di \vf$.
In either case, $M,v \mo \vf \vee \Di \vf$.
Since $u$ and $v$ belong to different clusters of a $\wK4$-frame $F$,  and $u  R v $, 
then for every $c\in [u ]_R$ we have 
%Since $[u]_R\neq [v]_R$ and $u  R v $, for every $c\in [u ]_R$ we have 
%AK added
%IS added more: both reviewers asked about it
%$cRv$, and hence, 
$M,c\mo \Di \vf$ in view  of \eqref{eq:basicWk4}.
It follows that for every $\Delta \in S([u]_R)$, $\Di \vf \in \Delta$.
We have $\Gamma(a)\in S([a]_R)=S([u]_R)$ (since  $a\approx u$), so $\Di\vf \in \Gamma(a)$. Thus, $M,a\mo \Di \vf$.
\hide{
\IShLater{The logic of the below can be simplified.... perhaps}
Since $a (R_\equiv)^n b$,
for some $u_0,\ldots,u_{n}, v_1,\ldots, v_{n}$
we have:  $a\equiv u_0$,
$u_i R v_{i+1}\equiv u_{i+1}$ for $i< n$,
$b=u_n$.
Consider the set $I=\{i\leq n\mid u_i\approx a\}$.
This set is non-empty, since $u_0\in I$.
Let $k$ be the largest in $I$.
%Observe that $k<n$, since $a\not \approx b=u_n$.
Then $v_{k+1}$ is not in the cluster of $u_k$:
indeed, we have $a\approx [u_k]$ and $v_{k+1}\equiv u_{k+1}$, so $[u_k]_R=[v_{k+1}]_R$ would imply $a\approx u_{k+1}$.

If $[v_{k+1}]=[b]$, then $M,v_{k+1}\mo \vf$.
If $[v_{k+1}]\neq [b]$, then $([v_{k+1}],[b])\in (R_\equiv)^{n-k-1}\setminus Id_{\ff{X}}$, and by induction
hypotheses, $[v_{k+1}] R_{\equiv,\Gamma} [b]$; in this case, $M,v_{k+1} \mo \Di \vf$.
So in both cases, $M,v_{k+1}\mo \vf \vee \Di \vf$.
Since $[u_k]_R\neq [v_{k+1}]_R$ and $u_k R v_{k+1}$, for every $c\in [u_k]_R$ we have $M,c\mo \Di \vf$.
It follows that for every $\Delta \in S(u_k)$, $\Di \vf \in \Delta$.
We have $\Gamma(a)\in S(a)=S(u_k)$, so $\Di\vf \in \Gamma(a)$. This means that $M,a\mo \Di \vf$.
}
\end{proof}

\hide{
\begin{remark*}
Filtrations of $\wK4$-frames were considered in \cite{esakia2001weak}. However, the quotient was given by the equivalence induced by the initial set of formulas $\Gamma$, and the resulting model may not be a filtration. The finite model property of $\wK4$ was proved in \cite{GuramEsakiaDavid2009}. 
\end{remark*}
}

\subsection{Properties of $\Lm$-frames} 
\IShLater{Did we say that we assume that $m>0$?} \AK{Yes!}

%\IS{Checked carefully May 29,2025}
 
The following proposition establishes that for $m>0$, the logics $\Lm$ are pretransitive and subframe.

\smallskip

\begin{proposition}\label{eq:Lm-trans-and-subfr} Let $m>0$, 
$F=(X,R)$ be an $\Lm$-frame.  
Then:
\begin{enumerate}
  \item $F$ is $m$-transitive. 
  \item For every  $Y\subseteq X$, $F\restr Y$ is an $\Lm$-frame. 
\end{enumerate}
\end{proposition}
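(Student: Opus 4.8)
The plan is to reduce both claims to the frame characterization \eqref{eq:m-property}, namely that $F=(X,R)$ validates $\Lm$ exactly when $R^{m+1}\subseteq R\cup Id_X$. For part (1), $m$-transitivity asks for $R^{m+1}\subseteq\bigcup_{i\leq m}R^i$; since $m>0$, both $R^0=Id_X$ and $R^1=R$ occur among the $R^i$ with $i\leq m$, so $R\cup Id_X\subseteq\bigcup_{i\leq m}R^i$, and \eqref{eq:m-property} immediately yields the desired inclusion. This part is essentially a one-line observation.

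For part (2), I would write $R_Y = R\cap(Y\times Y)$ for the relation of $F\restr Y$. The first step is to note that $R_Y\subseteq R$, whence $R_Y^{k}\subseteq R^{k}$ for every $k$, and moreover every pair in $R_Y^{k}$ has both endpoints in $Y$. Now take any $(a,b)\in R_Y^{m+1}$. Then $a,b\in Y$ and, by the previous remark together with \eqref{eq:m-property}, $(a,b)\in R^{m+1}\subseteq R\cup Id_X$. The second step is a two-case analysis: if $a=b$, then $(a,b)\in Id_Y$; if instead $(a,b)\in R$, then since also $a,b\in Y$ we get $(a,b)\in R\cap(Y\times Y)=R_Y$. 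Either way $(a,b)\in R_Y\cup Id_Y$, so $R_Y^{m+1}\subseteq R_Y\cup Id_Y$, and $F\restr Y$ validates $\Lm$ by \eqref{eq:m-property} again. For $Y=\emp$ the claim is vacuous.

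The content of part (2) — and the only place any care is needed — is precisely that the right-hand side of \eqref{eq:m-property} involves only $R^0$ and $R^1$. A single $R$-edge, or an identity pair, with both endpoints in $Y$ stays inside $F\restr Y$, which is what makes the condition survive restriction. This is in contrast to bare $m$-transitivity $R^{m+1}\subseteq\bigcup_{i\leq m}R^i$ for $m>1$: there a pair could be forced only into some $R^i$ with $2\leq i\leq m$, witnessed by a path through $X$ that leaves $Y$, and such a pair need not lie in $R_Y^i$. Thus I do not expect a genuine obstacle here; the proposition is elementary, and the point worth making explicit is exactly why the weak pretransitivity axiom, unlike general $m$-transitivity, yields a subframe logic.
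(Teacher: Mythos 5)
Your proof is correct and follows the same route as the paper, which simply states that both claims are immediate from \eqref{eq:m-property}; you have filled in exactly the details that justification leaves implicit. The closing remark about why general $m$-transitivity for $m>1$ fails to survive restriction is a nice observation, but not needed for the statement itself.
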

\begin{proof}
Immediate from \eqref{eq:m-property}.  
\end{proof}

%For a point $a$ in a frame $(X,R)$, let $[a]_R$ be the cluster of $a$. \todo{move?}
The following fact generalizes \eqref{eq:basicWk4} 
%of $\wK4$-frames 
for $\Lm$-frames:
\begin{proposition}\label{prop:Lm-diffClusters}
Let $(X,R)$ be an $\Lm$-frame. For $a,b\in X$, we have  
\begin{equation}\label{eq:generalPropLM}
\text{if $aR^{mn+1} b$ for some $n$ and $[a]_R\neq [b]_R$, then $a R b$.}
\end{equation}
\end{proposition}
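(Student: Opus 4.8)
The plan is to argue by induction on $n$, using the defining inclusion \eqref{eq:m-property}, namely $R^{m+1}\subseteq R\cup Id_X$, as the engine for shortening paths. Fix an $R$-path $a=x_0\mathbin{R}x_1\mathbin{R}\cdots\mathbin{R}x_{mn+1}=b$ witnessing $aR^{mn+1}b$. The key idea is to look at the $n$ overlapping ``windows'' of length $m+1$ that start at positions which are multiples of $m$: for each $j\in\{0,\dots,n-1\}$ we have $x_{jm}\,R^{m+1}\,x_{(j+1)m+1}$, so by \eqref{eq:m-property} either $x_{jm}\mathbin{R}x_{(j+1)m+1}$ (call this window \emph{good}) or $x_{jm}=x_{(j+1)m+1}$ (call it \emph{bad}).

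The decisive step is that some window must be good, which I would prove in contrapositive form: if every window is bad, then $b\in[a]_R$, contradicting $[a]_R\neq[b]_R$. Indeed, a bad window $j$ means $x_{jm}=x_{(j+1)m+1}$, so the points $x_{jm},\dots,x_{(j+1)m+1}$ form a closed $R$-walk and hence all lie in the single cluster $[x_{jm}]_R$; in particular $x_{(j+1)m}\in[x_{jm}]_R$. Starting from $x_0=a$ and propagating this along $j=0,\dots,n-1$, each $x_{jm}$ stays inside $[a]_R$, and the last bad window yields $b=x_{mn+1}=x_{(n-1)m}\in[a]_R$, the desired contradiction.

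Once a good window $j$ is found, I replace the length-$(m+1)$ segment from $x_{jm}$ to $x_{(j+1)m+1}$ by the single edge $x_{jm}\mathbin{R}x_{(j+1)m+1}$, obtaining the path $a\,R^{jm}\,x_{jm}\,R\,x_{(j+1)m+1}\,R^{m(n-1-j)}\,b$ of total length $jm+1+m(n-1-j)=m(n-1)+1$. Crucially this length does not depend on which window was good, so it stays congruent to $1$ modulo $m$, giving $aR^{m(n-1)+1}b$ with $[a]_R\neq[b]_R$; the induction hypothesis then delivers $aRb$. The base case $n=0$ is immediate since $R^{1}=R$, and for $m=1$ the argument degenerates exactly to the $\wK4$ fact \eqref{eq:basicWk4}.

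I expect the main obstacle to be precisely the ``bad window'' (cycle) case. A naive induction that simply strips off the first $m+1$ steps shortens the path by $m+1$ whenever that initial segment collapses into a cycle, which destroys the congruence $\equiv 1\pmod m$ on which the statement crucially depends (note that $aR^{i}b$ in different clusters does \emph{not} force $aRb$ for $1<i\le m$). The fix is twofold: aligning the windows at multiples of $m$ guarantees that removing one good window shortens the path by exactly $m$, preserving the residue; and recognizing a collapsed window as a cluster-cycle is what both forwards cluster membership toward $b$ and, via the contradiction above, ensures a good window always exists.
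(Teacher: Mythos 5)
Your proof is correct, but the inductive step is organized differently from the paper's. The paper also inducts on $n$, yet it only ever inspects the \emph{first} window: writing $a R^m c \mathbin{R} c' R^{mn} b$, it splits on whether $[c']_R = [a]_R$. If not, then $a\neq c'$, so $a R c'$ by \eqref{eq:m-property} and the induction hypothesis applies to $a R^{mn+1} b$; if so, then $c\in[a]_R$, hence $[c]_R\neq[b]_R$, and the induction hypothesis is applied to $c R^{mn+1} b$ (for the \emph{re-rooted} source point $c$), giving $cRb$, hence $aR^{m+1}b$ and finally $aRb$. You instead split on equality ($x_{jm}=x_{(j+1)m+1}$) rather than on cluster membership, which forces you to scan all $n$ windows and to prove separately, by the contrapositive cluster-propagation argument, that a good window exists; the paper avoids that global existence claim because its ``collapsed'' case ($[c']_R=[a]_R$, a weaker condition than $a=c'$) is still resolved locally by handing the induction to $c$. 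Both arguments rest on the same two points you correctly identify as the crux: each contraction must shorten the path by exactly $m$ so as to preserve the residue $1 \bmod m$, and a collapsed segment traps its intermediate points in a single cluster. Your version is a little longer but entirely sound; the only cost is the extra argument that not all windows can be bad.
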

\begin{proof}
Induction on $n$. Basis $n=0$ is trivial. % The case $n=1$ follows from \eqref{eq:m-property}.
%Assume $n>1$.
Assume $aR^{mn+m+1} b$. Then for some $c,c'$, we have $aR^m c R c'R^{mn} b$.

If $[a]_R\neq [c']_R$, then $aRc'$ by \eqref{eq:m-property}, and so $aR^{mn+1}b$. By induction hypothesis, $aRb$.

If $[a]_R=[c']_R$, then  $[a]_R=[c]_R$. Hence $[c]_R\neq [b]_R$. By induction hypothesis, $cRb$.
Now $aR^{m+1}b$, and so $aRb$ by \eqref{eq:m-property}.
\end{proof}

 \smallskip

\begin{definition}
A cluster frame $G=(X,R)$  
is said to be {\em $m$-regular}, if $R^{m+1}\subseteq R$; otherwise
$G$ is said to be {\em $m$-irregular}.
A cluster $C$ in a frame $F$ is {\em $m$-(ir)regular}, if the frame F$\restr C$ is.
\end{definition}

\smallskip 

%An important classification of irregular clusters is given by the following proposition.
%The following proposition says that  irregular clusters are $\wK4$-frames, or are small. 

\begin{proposition}\cite[Lemma 6.5]{LocalTab16AiML}\label{prop:irregAiML}
Let  $F=(X,R)$ be a cluster frame, $F\mo \Lm$.
If $F$ is $m$-irregular, then:
\begin{enumerate}
    \item for some $a_0, a_1, \ldots,  a_{m}$ we have 
$a_0R a_1 R\ldots R a_{m} R a_0$;
\item 
$X=\{a_0,\ldots,a_{m}\}$ or $R\cup Id_X=X\times X$,
\end{enumerate}
\end{proposition}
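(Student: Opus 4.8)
The plan is to read part~1 straight off the definition of $m$-irregularity, and to prove the dichotomy in part~2 by an edge-propagation argument driven entirely by the single inclusion $R^{m+1}\se R\cup Id_X$, used in the sharp form: \emph{for distinct $u,v$, any $R$-walk of length exactly $m+1$ from $u$ to $v$ forces $uRv$} (the $Id_X$-alternative is excluded precisely because $u\neq v$).

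First I would pin down a witness and settle part~1. Since $F$ is $m$-irregular, $R^{m+1}\not\se R$, so some pair lies in $R^{m+1}\setminus R$; as $F\mo\Lm$ gives $R^{m+1}\se R\cup Id_X$, that pair must sit on the diagonal. Hence there is a point $a_0$ with $a_0R^{m+1}a_0$ and $\neg(a_0Ra_0)$. Unwinding the walk $a_0R^{m+1}a_0$ produces $a_0Ra_1R\ldots Ra_mRa_0$, which is exactly part~1 (the $a_i$ need not be distinct, which is harmless for the set equality in part~2). Note that each $a_i$ thereby lies on an $(m+1)$-cycle, so it carries its own loop of length $m+1$, and one can walk from $a_i$ to any $a_j$ around the cycle in a controlled number of steps.

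For part~2, if every point is some $a_i$ then $X=C:=\{a_0,\ldots,a_m\}$ and the first disjunct holds; so I assume there is some $b\notin C$ and aim to prove $R\cup Id_X=X\times X$. I first produce two \emph{seeds}. Prepending the loop $a_0R^{m+1}a_0$ to a shortest walk $a_0R^tb$ (with $1\le t\le m$ by $m$-transitivity) and reading off the \emph{last} $m+1$ edges gives $a_tR^{m+1}b$, whence $a_tRb$ since $a_t\neq b$; symmetrically, appending the loop to a shortest walk $bR^sa_0$ and reading off the \emph{first} $m+1$ edges gives $bRa_{m+1-s}$. Two \emph{rotation} steps then spread these around the cycle: from $a_iRb$ we get $a_{i+1}R^{m}a_iRb$, i.e.\ $a_{i+1}R^{m+1}b$, so $a_{i+1}Rb$; and from $bRa_j$ we get $bRa_jR^{m}a_{j-1}$, i.e.\ $bR^{m+1}a_{j-1}$, so $bRa_{j-1}$. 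Iterating, $a_iRb$ and $bRa_i$ hold for \emph{all} $i$.

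Finally I would assemble full completeness by routing each pair of distinct points through the cycle on a walk of length exactly $m+1$ and applying the sharp inclusion once more. For distinct cycle points $a_i,a_j$, the walk $a_iRb$ followed by an $m$-step walk $bR^{m}a_j$ (available since $b$ reaches every $a_k$ and the cycle is navigable) has length $m+1$, so $a_iRa_j$. For distinct $b,b'\notin C$, the walk $bRa_i$, then $m-1$ steps around the cycle to $a_{i+m-1}$, then $a_{i+m-1}Rb'$ has length $m+1$, so $bRb'$; the mixed cases are already handled by the previous paragraph. Thus every distinct pair is $R$-related, i.e.\ $R\cup Id_X=X\times X$. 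The main obstacle, and the reason the naive filtration through $\sim_\Gamma$ is irrelevant here, is exactly that $R^{m+1}\se R\cup Id_X$ yields an edge only between \emph{distinct} endpoints (a pure loop $uR^{m+1}u$ gives nothing); the entire bookkeeping with loop-padding and rotation exists only to guarantee that every length-$(m+1)$ walk to which I apply the inclusion has genuinely distinct endpoints.
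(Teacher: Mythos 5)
The paper does not prove this proposition at all: it is imported verbatim from the cited reference (Lemma~6.5 of \cite{LocalTab16AiML}), so there is no in-paper argument to compare yours against. Your proof is correct and self-contained. Part~1 is exactly the right one-line observation: $m$-irregularity puts some pair in $R^{m+1}\setminus R$, and the $\Lm$-inclusion forces that pair onto the diagonal, yielding an irreflexive point on an $(m+1)$-cycle. For part~2, the loop-padding seeds ($a_tR^{m+1}b$ from prepending the cycle to a shortest walk $a_0R^tb$ with $1\le t\le m$, which exists by $m$-transitivity and $b\neq a_0$) and the two rotation steps ($a_{i+1}R^ma_iRb$ and $bRa_jR^ma_{j-1}$) correctly propagate $a_iRb$ and $bRa_i$ to all indices, and the final routing of each distinct pair through a walk of length exactly $m+1$ legitimately invokes $R^{m+1}\se R\cup Id_X$ with distinct endpoints in every application; the argument never needs the $a_i$ to be distinct, which matches the paper's explicit remark. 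The only cosmetic blemish is the closing sentence about filtration through $\sim_\Gamma$, which has nothing to do with this proposition and should be dropped; otherwise this is a clean elementary derivation of a fact the paper merely cites.
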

%\IS{May 2025: A typo in the original AiML proof: in (c), one y' should be y; but math is fine.}

\smallskip 

\begin{remark}
It is not assumed that $a_0,\ldots,a_m$ are $m+1$ distinct points.
\end{remark}
Hence, an $m$-irregular $\Lm$-cluster is an $\wK4$-frame, or a small frame with 
a cyclic path through all its points.

\smallskip 

\begin{proposition}\label{prop:LmIrregular}
Let  $F=(X,R)$ be an $\Lm$-frame, $C$ an $m$-irregular cluster in $F$.
Assume that 
$a\in C$, $b\notin C$. Then:
\begin{enumerate}
\item \label{prop:LmIrregular:1}
if $aR^*b$, then $a R^i b$ for all $i>0$;
\item \label{prop:LmIrregular:2}
if $bR^*a$, then $b R^i a$ for all $i>0$.
\end{enumerate}
\end{proposition}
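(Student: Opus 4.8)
The whole argument rests on one structural lemma about the irregular cluster $C$: \emph{every point $x\in C$ has a loop of length $m+1$, i.e. $xR^{m+1}x$.} I would derive this from Proposition~\ref{prop:irregAiML}. First note that an irregular cluster has at least two points, since a one-point cluster always satisfies $R^{m+1}\subseteq R$. If $C=\{a_0,\dots,a_m\}$ carries the cycle $a_0Ra_1R\cdots Ra_mRa_0$, then starting the traversal at any $x=a_j$ and going once around is a closed walk of length $(m-j)+1+j=m+1$, so $xR^{m+1}x$. If instead $R\cup Id_C=C\times C$, then for an irreflexive $x$ I pick $y\neq x$ and use $xRyRx$ to get $xR^2x$, a third point or a reflexive neighbour to manufacture an odd loop when $m$ is even, and observe that the one remaining configuration (two mutually related irreflexive points with $m$ even) is actually regular and hence excluded. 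Since $m+1\equiv 1\pmod m$, inserting such loops shifts the length of any walk by $1$ modulo $m$; this is exactly what is needed to feed walks into \eqref{eq:generalPropLM}, which only fires on exponents of the form $mn+1$.

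For part~\ref{prop:LmIrregular:1} I would argue in three steps. \textbf{Seed.} From $aR^*b$ we get $aR^{i}b$ for some $i\ge 1$; prepending $k$ copies of the loop $aR^{m+1}a$ gives $aR^{\,i+k(m+1)}b$, and choosing $k\equiv 1-i\pmod m$ makes the exponent $\equiv 1\pmod m$, so \eqref{eq:generalPropLM} yields $aRb$. \textbf{Saturation.} Let $B=\{x\in C\mid xRb\}$; I claim $B=C$. For any $c\in C$ we have $cR^{j}a$ for some $j$ (the cluster is strongly connected), and inserting loops at $a$ gives $cR^{mn}a$ for a suitable $n$; then $cR^{mn}aRb=cR^{mn+1}b$ and \eqref{eq:generalPropLM} gives $cRb$. \textbf{Filling exponents.} Since $C$ has at least two points and is strongly connected, every point has an $R$-successor inside $C$, so from $a$ I can walk $i-1$ steps inside $C$ to some $c\in C$; as $c\in B=C$ we conclude $aR^{\,i-1}cRb=aR^{i}b$, for every $i\ge 1$.

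Part~\ref{prop:LmIrregular:2} is the mirror image. From $bR^*a$ I first obtain $bRa$ by appending copies of the loop at the \emph{target} $a$ and applying \eqref{eq:generalPropLM}. Then I show $B'=\{x\in C\mid bRx\}$ equals $C$: for $c\in C$ pick $aR^{j}c$, pad with loops at $c$ to get $aR^{mn}c$, whence $bRaR^{mn}c=bR^{mn+1}c$ and \eqref{eq:generalPropLM} gives $bRc$. Finally, since every point of $C$ has an $R$-predecessor inside $C$, for each $i\ge1$ I build a backward walk $c_{i-1}R\cdots Rc_1Ra$ of length $i-1$ inside $C$ ending at $a$; then $bRc_{i-1}$ and $c_{i-1}R^{\,i-1}a$ give $bR^{i}a$, because $c_{i-1}\in B'=C$.

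The main obstacle is the structural lemma $xR^{m+1}x$: everything downstream depends on having, at every point of $C$, a loop whose length is $\equiv 1\pmod m$, since this is what lets me repeatedly normalize walk lengths to the form $mn+1$ demanded by \eqref{eq:generalPropLM}. Establishing it is where the dichotomy of Proposition~\ref{prop:irregAiML} is essential, and the only genuinely fiddly point is the parity bookkeeping in the dense case $R\cup Id_C=C\times C$, in particular ruling out the two-point all-irreflexive configuration when $m$ is even by appealing to irregularity.
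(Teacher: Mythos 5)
Your proposal is correct, but it is organized around a different key lemma than the paper's proof. The paper takes the $(m+1)$-cycle $D=\{a_0,\dots,a_m\}\subseteq C$ supplied by Proposition~\ref{prop:irregAiML} and proves two facts about $D$ itself: that accessibility to a point outside $D$ propagates around the cycle (if one cycle point sees $c$, all do), and hence that $R^*(d)\setminus C\subseteq R(d)$ for $d\in D$; it then pads walks inside $D$ to realize every exponent $i>0$, and finally extends from $D$ to all of $C$ using the universality of $R\cup Id_C$ in the case $D\neq C$. Part~2 is obtained for free by observing that $(X,R^{-1})$ is again an $\Lm$-frame. You instead prove the stronger uniform statement that \emph{every} point of $C$ carries a loop $xR^{m+1}x$, then normalize walk lengths modulo $m$ and feed them into \eqref{eq:generalPropLM}, with a saturation step showing all of $C$ sees $b$ in one step. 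Both arguments rest on the same arithmetic ($m+1\equiv 1\pmod m$ lets the cycle shift walk lengths to the form $mn+1$) and the same use of the defining inclusion, so the mechanism is identical; the trade-off is that your per-point loop lemma costs you the parity case analysis in the dense case $R\cup Id_C=C\times C$ (including ruling out the two-point irreflexive configuration for even $m$), which the paper sidesteps by working only with the cycle $D$ and invoking universality at the very end. Your mirrored explicit argument for part~\ref{prop:LmIrregular:2} is also heavier than the paper's one-line reduction to the inverse frame, though nothing in it is wrong. All the small steps you rely on (irregular clusters have at least two points; every point of a multi-point cluster has an $R$-successor and an $R$-predecessor inside the cluster) do check out.
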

\begin{proof}
Since $C$ is $m$-irregular, there is a subset $D=\{a_0,\ldots, a_m\}$ of $C$ with $a_0R a_1 R\ldots R a_{m} R a_0$.

First, observe that for every $c\notin D$, we have
\begin{equation}\label{eq:cycle}
\EE d\in D (dRc)\, \;\Rightarrow \;\AA d\in D\, (dRc).
\end{equation}
Indeed, if $dRc$, then consider the point $d'$ that follows $d$ in the cycle $D$: then $d'R^{m}dRc$, $d'\neq c$, and so $d'Rc$. This yields that $c$ is accessible from any point in $D$.

Now we claim that
\begin{equation}\label{eq:ref:manyways-0}
\AA d\in D \; (R^*(d){\setminus}C\subseteq R(d)).
\end{equation}
%The case $l=0$ is trivial, since $R^0(d)\setminus C=\emp$.
Let $d R^* c$ for some $d\in D$, $c\notin C$.
By $m$-transitivity of $R$, we have $d R^l c$  for some $l$ with $0<l\leq m$.
For some $d'\in D$, we have $d' R^{m+1-l}d$, so $d'R^{m+1} c$, and  so $d'Rc$. From \eqref{eq:cycle}, $dRc$, which proves
\eqref{eq:ref:manyways-0}.

Now we show that
\begin{equation}\label{eq:ref:manyways}
\AA d\in D \; (R^*(d){\setminus}C\subseteq R^i(d) \text{ for all positive $i$}).
\end{equation}
Let $d R^* c$ for some $d\in D$, $c\notin C$.
Consider $i>0$. We have $i=q(m+1)+r$ for some $q,r\geq 0$, $r\leq m$.
Assume $r>0$. Consider $d'\in D$ such that $dR^{r-1}d'$. By \eqref{eq:ref:manyways-0}, we have
$d'Rc$. Hence: $dR^{q(m+1)}dR^{r-1}d'Rc$, 
%Ak typo fixed
so $dR^ic$. If $r=0$, then $q>0$. Let $d'$ be the point that precedes
$d$ in the cycle $D$. We have $dR^{(q-1)(m+1)}d R^m d'$ and  $d' R c$ by \eqref{eq:ref:manyways-0}, so $dR^ic$.

This proves the first statement of the proposition  for the case when $D=C$.

Assume that $aR^* b$ for $a\in C{\setminus}D$.
%Assume that $d'R^*a$.
Consider $d\in D$. % Consider $d\in D$.  
%AK поменял следующее предложение
%By Proposition \ref{prop:irregAiML} , we have $aRd$.
Since $D\ne C$ and $a\ne d$,  by Proposition \ref{prop:irregAiML}  we have that $R\cup Id_C$ is universal on $C$ and that $aRd$.
Let us show that $aR^ib$ for all $i>0$.
Assume $i>1$. We have $dR^{i-1} b$ by \eqref{eq:ref:manyways}; so $aR^ib$. For the case $i=1$, 
observe that we have $d R^m b$ by \eqref{eq:ref:manyways}, so $aR^{m+1} b$; since $b\notin C$, $a\neq b$, and hence $aRb$.

This proves the first statement of the proposition. The second statement follows from the first and
the observation that $(X,R^{-1})$ is an $\Lm$-frame.
\end{proof}

\begin{proposition}\label{prop:LMregularC}
If $C$ is an $m$-regular cluster in an $\Lm$-frame $(X,R)$, 
%If $F\restr C$ is $m$-regular, 
then $R^{nm+1}(a)\subseteq R(a)$ for all $a\in C$, $n<\omega$.
\end{proposition}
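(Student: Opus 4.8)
The plan is to fix $a\in C$ and, given $b$ with $a R^{nm+1} b$, prove $aRb$ by splitting on whether $b$ lies in $C$. If $[a]_R\neq[b]_R$, i.e.\ $b\notin C$, the conclusion is immediate from Proposition~\ref{prop:Lm-diffClusters}: the hypothesis $a R^{nm+1}b$ is exactly of the form required there (with the parameter $n$), and since the clusters of $a$ and $b$ differ we get $aRb$ at once. Thus the whole content of the proposition lies in the case $b\in C$, where the $m$-regularity of the cluster, rather than mere $m$-transitivity, must be used.

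For the case $b\in C$, I would first show that every $R$-path from $a$ to $b$ stays inside $C$. If $a=x_0\,R\,x_1\,R\,\ldots\,R\,x_k=b$, then for each $i$ we have $a R^* x_i$ and $x_i R^* b$; since $a,b\in C$ gives $b R^* a$, we obtain $x_i R^* a$ and $a R^* x_i$, so $x_i\sim_R a$ and hence $x_i\in C$. Writing $S=R\cap(C\times C)$ for the relation of $F\restr C$, this says that $a R^k b$ with $a,b\in C$ implies $a\,S^{k}\,b$.

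Next I would prove, by induction on $n$, that $S^{nm+1}\subseteq S$. The base $n=0$ is trivial. For the step, since $S^{(n+1)m+1}=S^{m}\circ S^{nm+1}$, the induction hypothesis $S^{nm+1}\subseteq S$ and monotonicity of composition give $S^{(n+1)m+1}\subseteq S^{m}\circ S=S^{m+1}\subseteq S$, the final inclusion being precisely the $m$-regularity of $C$. Combining the two facts, if $a,b\in C$ and $a R^{nm+1}b$ then $a\,S^{nm+1}\,b$, hence $a\,S\,b$, so $aRb$; together with the case $b\notin C$ this establishes $R^{nm+1}(a)\subseteq R(a)$.

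The one genuinely geometric step is the path-localization claim: a walk joining two points of a single cluster can never leave that cluster, which is what lets me pass from $R$ to the restricted relation $S$ and apply regularity. The remaining ingredients---Proposition~\ref{prop:Lm-diffClusters} for points outside $C$ and the monotonicity computation for powers of $S$---are routine, so I expect no serious obstacle beyond correctly localizing the path.
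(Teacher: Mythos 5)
Your proof is correct and takes essentially the same route as the paper, which handles the same two cases ($b\notin C$ via Proposition~\ref{prop:Lm-diffClusters}, and $b\in C$ via $m$-regularity) but states the second case without detail. Your path-localization observation (a walk between two points of a cluster stays in that cluster) and the induction $S^{(n+1)m+1}\subseteq S^{m}\circ S=S^{m+1}\subseteq S$ supply exactly the details the paper leaves implicit.
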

\begin{proof}
Let $aR^{mn+1} b$. To see that $aRb$, consider two cases: the case $b\in C$, which follows from regularity of $C$;
the case $b\notin C$, which follows from Proposition \ref{prop:Lm-diffClusters}.
\end{proof}

\hide{
\begin{definition}\label{def:eq-clusters}
For a model $M=(F,\theta)$ and a set of formulas $\Delta$,
we define the relation $\approx_{M,\Delta}$ on $F$.

First, for a cluster $C$ in $F$, we define the algebra $A(C)$ as follows. Let $F_C$ be the restriction of
$F$ to $C$;
consider the algebra $\Alg(F_C)$ of the frame $F_C$.
 $A(C)$ is defined as the subalgebra of $\Alg(F_C)$ generated by the set $$\{\valext(\vf)\cap C\mid \vf\in \Delta\}.$$
%\AK{algebra = Modal algebra? Is $A(F_C)$ the Modal algebra of all subsets of $C$? It's unclear}
%\ISh{I added the definition, before this lemma.}

%Let $a,b\in X$, $C=[a]_R$, $D=[b]_R$.
We define ${\approx}_{M,\Delta}$ as the set of pairs $(a,b)$ such that
there is an isomorphism $f:A([a]_R)\to A([b]_R)$ of algebras such that
for all $\vf\in\Delta$,
\begin{equation}\label{eq:approx-def}
f(\valext(\vf)\cap [a]_R )=\valext(\vf)\cap [b]_R.
\end{equation}
\end{definition}

Clearly, $\approx_{M,\Delta}$ is an equivalence.

\begin{proposition}\label{prop:finiIndexLFClusters}
Assume that the logic of the set of clusters in $F$ is locally finite. Then for any model $M$ on $F$,
for any finite set $\Delta$ of formulas,
$\approx_{M,\Delta}$ is an equivalence of finite index. \ISh{inconsistency in the style of notation}
\end{proposition}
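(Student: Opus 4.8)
The plan is to reduce the statement to counting, up to isomorphism, the finitely many labelled algebras $(A(C),(\valext(\vf)\cap C)_{\vf\in\Delta})$ attached to the clusters $C$ of $F$. First I would note that $\approx_{M,\Delta}$ is constant on clusters: if $a\sim_R b$, then $[a]_R=[b]_R$ and the identity map on $A([a]_R)$ is an admissible isomorphism, so $a\approx_{M,\Delta}b$. Hence every $\approx_{M,\Delta}$-class is a union of clusters, and by the definition of $\approx_{M,\Delta}$ two clusters $C,C'$ lie in the same class exactly when there is an algebra isomorphism $A(C)\to A(C')$ carrying $\valext(\vf)\cap C$ to $\valext(\vf)\cap C'$ for every $\vf\in\Delta$. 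Thus the index of $\approx_{M,\Delta}$ equals the number of isomorphism types of the labelled algebras $(A(C),(\valext(\vf)\cap C)_{\vf\in\Delta})$, and it suffices to show this number is finite.

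Next I would bound the size of each $A(C)$. Let $\clC=\{F\restr C\mid C\text{ a cluster of }F\}$, so that $\Log{\clC}$ is the logic in the hypothesis. Each $F\restr C$ is a cluster frame belonging to $\clC$, whence $F\restr C\mo\Log{\clC}$, and so the complex algebra $\Alg(F\restr C)$ validates every formula of $\Log{\clC}$. Since $A(C)$ is a subalgebra of $\Alg(F\restr C)$ and validity of modal formulas is inherited by subalgebras, $A(C)$ also validates $\Log{\clC}$; that is, $A(C)$ lies in the variety of $\Log{\clC}$-algebras. Moreover $A(C)$ is, by construction, generated by the at most $|\Delta|$ elements $\valext(\vf)\cap C$ with $\vf\in\Delta$. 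Local finiteness (local tabularity) of $\Log{\clC}$ means precisely that this variety is locally finite, so that the free $|\Delta|$-generated $\Log{\clC}$-algebra is finite; writing $N$ for its cardinality, every $|\Delta|$-generated algebra in the variety, being a homomorphic image of this free algebra, has at most $N$ elements. In particular $|A(C)|\le N$ for every cluster $C$ of $F$.

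Finally I would count. Up to isomorphism there are only finitely many modal algebras of size at most $N$, and for each of them at most $N^{|\Delta|}$ choices of a distinguished $\Delta$-indexed tuple of elements; hence there are finitely many isomorphism types of labelled algebras $(A(C),(\valext(\vf)\cap C)_{\vf\in\Delta})$. Combined with the first paragraph, this bounds the index of $\approx_{M,\Delta}$ by a finite number, which, together with the already observed fact that $\approx_{M,\Delta}$ is an equivalence, proves the claim.

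I expect the one substantive step to be the middle one: turning the hypothesis of local finiteness into a uniform bound on $|A(C)|$. This rests on the standard correspondence between local tabularity of a modal logic and local finiteness of its variety of algebras, together with the observation that the free finitely generated algebra of a locally tabular logic is finite, so that every $|\Delta|$-generated algebra in the variety is a homomorphic image of it and hence uniformly bounded in size. The reduction to counting and the finiteness of bounded-size labelled isomorphism types are then routine.
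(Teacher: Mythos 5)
Your argument is correct and follows essentially the same route as the paper's own (very terse) proof: each $A(C)$ is a $|\Delta|$-generated algebra for the logic of the clusters, hence a homomorphic image of the finite free $|\Delta|$-generated algebra, which bounds its size uniformly and leaves only finitely many isomorphism types. Your version is in fact slightly more careful than the paper's in one respect: you count isomorphism types of algebras \emph{together with} the distinguished $\Delta$-indexed tuple of elements (which is what $\approx_{M,\Delta}$ actually compares), a point the paper glosses over.
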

\begin{proof} Let $L$ be the logic of the clusters in $F$, $k=|\Delta|$.
Every algebra $A(C)$ is $k$-generated, so it is a homomorphic image of the $k$-generated free algebra
of $L$. The latter algebra is finite. Hence, every set of pairwise non-isomorphic $k$-generated $L$-algebras is finite, so the set of
$\approx_{M,\Delta}$-classes is.

%That $\approx_{M,\Delta}$ is an equivalence which refines $\sim_\Delta$ follows from \eqref{eq:approx-def}.
\end{proof}

}

%\begin{definition}\label{def:m-closure}
%Let $(X,R)$ be a frame, $m\in \omega$. Define $R^{[m]}_n$ by recursion on $n$:
%$$R^{[m]}_0=R,\quad R^{[m]}_{n+1}=
%(\bigcup_{i<n} R^{[m]}_{i})^{m+1}{\setminus} Id_X.$$
%Put
%$$
%R^{[m]}= \bigcup_{n\in \omega} R^{[m]}_n.
%$$
%The frame $(X,R^{[m]})$ is called the {\em weak $m$-closure of $(X,R)$}.
%\end{definition}

\begin{proposition}\label{prop:sel:chain} Let $F=(X,R)$ be an $\Lm$-frame,
$a_m R^* a_{m-1} R^* \ldots R^* a_0$.  
Assume that $[a_i]_R\neq [a_j]_R$ for some $i,j\leq m$, or 
%Ak typo fixed
all $a_i$ belong to an $m$-regular cluster. 
%$[a_i]_R$ is $m$-regular for each $i\leq m$. 
Then $R(a_i)\subseteq  R(a_j)$ for some $i<j\leq m$.
\end{proposition}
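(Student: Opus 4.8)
The plan is to isolate a single clean mechanism for nesting and then show that, under either branch of the hypothesis, such a configuration is always available. The key observation is that membership $c\in R(a_i)$ only supplies a \emph{single} extra edge $a_i R c$, so in order to invoke Proposition \ref{prop:LMregularC} (which collapses $R^{nm+1}$ into $R$ at points of a regular cluster) I must first arrange a path from $a_j$ down to $a_i$ whose length is divisible by $m$. This motivates the lemma I would prove first: if $i<j$, the point $a_j$ lies in an $m$-regular cluster, and $a_j R^{mk} a_i$ for some $k\geq 1$, then $R(a_i)\subseteq R(a_j)$. The proof is immediate: for $c\in R(a_i)$ we get $a_j R^{mk+1} c$, and Proposition \ref{prop:LMregularC} gives $a_j R c$.

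After a trivial reduction (if $a_l=a_{l-1}$ for some consecutive pair then $R(a_{l-1})=R(a_l)$ and we are done, so assume all consecutive points distinct), I would split into two exhaustive cases according to whether any $a_t$ lies in an irregular cluster. In the case where \emph{all points lie in regular clusters} (which subsumes the hypothesis that all $a_i$ share one $m$-regular cluster), I use $m$-transitivity, Proposition \ref{eq:Lm-trans-and-subfr}, to write $a_l R^{e_l} a_{l-1}$ with $1\le e_l\le m$, and form the partial sums $\sigma_0=0$ and $\sigma_l=e_1+\dots+e_l$, so that $a_j R^{\sigma_j-\sigma_i} a_i$ for $i<j$. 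The $m+1$ numbers $\sigma_0,\dots,\sigma_m$ cannot all be distinct modulo $m$, hence $\sigma_i\equiv\sigma_j\pmod m$ for some $i<j$; then $a_j R^{mk} a_i$ with $k\ge 1$, and the lemma applies since $a_j$ is regular.

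In the case where \emph{some $a_t$ lies in an irregular cluster}, the hypothesis forces $[a_i]\neq[a_j]$ for some $i,j$ (otherwise all points would sit in one irregular cluster, which the hypothesis excludes). I would choose $a_t$ of \emph{maximal index} among the irregular points. If some $i<t$ has $[a_i]\neq[a_t]$, a short argument shows that every $c\in R(a_i)$ lies outside $[a_t]$ while $a_t R^* c$, so Proposition \ref{prop:LmIrregular}(1) yields $a_t R c$ and hence $R(a_i)\subseteq R(a_t)$. Otherwise all points of index $\le t$ share the cluster $[a_t]$, so the differing point must occur above: some $a_j$ with $j>t$ and $[a_j]\neq[a_t]$; by maximality of $t$ this $a_j$ is regular, Proposition \ref{prop:LmIrregular}(2) supplies $a_j R^m a_t$, and the lemma finishes with $R(a_t)\subseteq R(a_j)$.

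The main obstacle is exactly this ``exponent modulo $m$'' mismatch: unlike the transitive case $m=1$, knowing $a_j R^* a_i$ across different clusters does not by itself give $a_j R a_i$, so the whole argument hinges on manufacturing paths of length $\equiv 0\pmod m$. The pigeonhole over the $m+1$ partial sums does this when all clusters are regular, while in the mixed case the strong reachability of irregular clusters (Proposition \ref{prop:LmIrregular}) furnishes paths of every positive length for free; reconciling these two sources, and recognizing that the hypothesis is precisely what excludes the lone irregular cluster (where, e.g., a complete irreflexive cluster admits no nesting at all), is the delicate point.
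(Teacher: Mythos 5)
Your proof is correct and follows essentially the same route as the paper's: a case split on whether all clusters are $m$-regular, a pigeonhole argument on path lengths modulo $m$ (your $m+1$ partial sums $\sigma_0,\dots,\sigma_m$ versus the paper's $k_1,\dots,k_m$ with the multiple-of-$m$ case handled separately) feeding into Proposition \ref{prop:LMregularC}, and Proposition \ref{prop:LmIrregular} for the irregular case. The only cosmetic difference is that in the irregular case the paper closes both subcases with Proposition \ref{prop:Lm-diffClusters}, whereas you pick the irregular point of maximal index so that one subcase reduces to your regular-cluster lemma; both work.
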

\begin{proof}
%AK разжевал как мог, может быть слишком.
%Непонятно получилось, и опечатка была. Я переписал
We firstly assume that not all $[a_i]_R$ are $m$-regular. In this case, there are distinct clusters among them. Then for some $i<j\leq m$ we have: $[a_i]_R\neq [a_j]_R$, 
  and one of them is $m$-irregular.   
Assume that $[a_j]_R$ is $m$-irregular.   
%Then $[a_i]_R\neq [a_j]_R$ for some $i,j\leq m$. 
%Then there exists $i$ such that $[a_i]_R$ is irregular and there exists $j$ such that $$[a_i]_R\neq [a_j]_R$. 
%If $i<j$, then $a_j R^* a_i$; 
By Proposition \ref{prop:LmIrregular}.\ref{prop:LmIrregular:1}, 
$a_j R^{m} a_i$. Now $R(a_i)\subseteq R(a_j)$ 
by Proposition \ref{prop:Lm-diffClusters}.
%Since $R^{m+1} \subseteq R \cup Id_X$ we have $R(a_i)\subseteq  R(a_j)$. 
The case when $[a_i]_R$ is $m$-irregular     follows similarly from Proposition \ref{prop:LmIrregular}.\ref{prop:LmIrregular:2}.
%If one of these points belong to an $m$-irregular cluster, 
%then $[a_i]_R\neq [a_j]_R$ for some $i,j\leq m$, and one of these clusters is $m$-irregular;
%in this case,  the statement holds by Proposition 
%\ref{prop:LmIrregular}.
%\ISH{Так?} 

Now assume that all clusters $[a_i]_R$ are $m$-regular.  

We aim to show that for some $i<j\leq m$,  
\begin{equation}\label{eq:sel:mult}
\text{$a_j R^k a_i$ with $k$ a multiple of $m$}.
\end{equation} 
For $s<m$, fix an $l_s$ such that $a_{s+1} R^{l_{s}} a_s$.  
For $0<j\leq m$, put $k_j=\sum_{s< j}l_s$. If some $k_j$ is a multiple of $m$, then 
\eqref{eq:sel:mult} holds for $i=0$.  
Otherwise, $k_i\equiv k_j \mod m$ for some $i<j$ (there are $m-1$ non-zero remainders for 
$m$ numbers $k_1$,\ldots, $k_m$). So  
$k=\sum_{i\leq s<j}l_s$ is a multiple of $m$. We have $a_j R^k a_i$, so \eqref{eq:sel:mult} holds in this case as well.
  
By Proposition \ref{prop:LMregularC}, $R^{k+1}(a_j)\subseteq R(a_j)$. If $a_i R a$, then $a_j R^{k+1} a$, and so $a_j R a$.
\end{proof}

\subsection{Weak $m$-closure}

%\IShLater{Strong $m$ closure: transitive closure; Gabbay}

\begin{definition}\label{def:m-closure-alt}
%\todo{Is it better?}
Let $m>0$. For a frame $(X,R)$, define $R^{[m]}_n$ by recursion on $n$:
%$$R^{[m]}_0=R,\quad R^{[m]}_{n+1}=
%S^{m+1}{\setminus} Id_X,\text{ where }S=\bigcup_{i\le n} R^{[m]}_{i}.$$
$$
\textstyle
R^{[m]}_0=R,\quad R^{[m]}_{n+1}=
\left(\bigcup_{i\leq n} R^{[m]}_{i}\right)^{m+1}{\setminus} Id_X.$$
%\ISh{I hide $S$: it is very confusing for the reader of the next proposition}
Put
$$
\textstyle
R^{[m]}= \bigcup_{n\in \omega} R^{[m]}_n.
$$
The frame $(X,R^{[m]})$ is called the {\em weak $m$-closure of $(X,R)$}.
\end{definition}

%\AK{I add "weak". I think it's better.}
%\ISh{Fine with me. But now you need to say something about the $m$-closure, otherwise it is strange. Perhaps, it will naturally appear somewhere later (intro, prelim).}

\smallskip 

\begin{proposition}\label{prop:m-closure} Let $(X,R)$ be a frame. Then
$R^{[m]}$ is the smallest relation $S$ on $X$ such that $S$ contains $R$ and
$(X,S)$ is an $\Lm$-frame.
\end{proposition}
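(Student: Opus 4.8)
The plan is to verify the two defining properties of a smallest element: first that $R^{[m]}$ is itself an $\Lm$-frame extending $R$, and second that it is contained in every $\Lm$-frame $(X,S)$ with $R\subseteq S$. Throughout I would use the frame characterization \eqref{eq:m-property}, so that ``$(X,S)$ is an $\Lm$-frame'' means exactly $S^{m+1}\subseteq S\cup Id_X$. The containment $R\subseteq R^{[m]}$ is immediate, since $R=R^{[m]}_0$ is one of the sets in the union defining $R^{[m]}$.

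For the first property I would establish $(R^{[m]})^{m+1}\subseteq R^{[m]}\cup Id_X$ directly. Given a pair in $(R^{[m]})^{m+1}$, I unfold it as a composition of $m+1$ steps, each of which lies in some level $R^{[m]}_{n_i}$. Setting $N=\max_i n_i$ and using that each level sits inside the cumulative union $\bigcup_{i\le N}R^{[m]}_i$, the whole pair lies in $(\bigcup_{i\le N}R^{[m]}_i)^{m+1}$. By the recursion this set is contained in $R^{[m]}_{N+1}\cup Id_X$, so the pair is either diagonal or belongs to $R^{[m]}_{N+1}\subseteq R^{[m]}$; in both cases it lies in $R^{[m]}\cup Id_X$, as required.

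For minimality, I would fix $S$ with $R\subseteq S$ and $S^{m+1}\subseteq S\cup Id_X$, and prove $R^{[m]}_n\subseteq S$ for all $n$ by induction. The base case is $R^{[m]}_0=R\subseteq S$. For the step, the induction hypothesis gives $\bigcup_{i\le n}R^{[m]}_i\subseteq S$, whence $(\bigcup_{i\le n}R^{[m]}_i)^{m+1}\subseteq S^{m+1}\subseteq S\cup Id_X$; removing the diagonal yields $R^{[m]}_{n+1}\subseteq (S\cup Id_X)\setminus Id_X\subseteq S$. Taking the union over $n$ gives $R^{[m]}\subseteq S$, which completes the argument.

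I do not expect a genuine obstacle here: the argument is essentially bookkeeping. The only point that must be handled with care is that the recursion closes over the cumulative union $\bigcup_{i\le n}R^{[m]}_i$ rather than over $R^{[m]}_n$ alone. This is precisely what lets a composition whose steps come from different levels be absorbed at the single level indexed by their maximum, and it is also what makes the ``$\setminus Id_X$'' in the definition harmless, since the diagonal is always handled separately by \eqref{eq:m-property}.
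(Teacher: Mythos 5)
Your proposal is correct and follows essentially the same route as the paper's proof: the closure property is obtained by absorbing an $(m+1)$-fold composition of levels into the level indexed by one more than their maximum, and minimality is the same induction on $n$ that the paper leaves as "easy." You simply spell out the bookkeeping the paper omits.
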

\begin{proof}
Clearly, $R^{[m]}$ contains $R^{[m]}_0=R$. Assume
that $a  (R^{[m]})^{m+1} b$ and $a\neq b$.
%Then $a  (R^{[m]}_n)^{m+1} b$ for some $n$, 
\ISh{-- I changed this place -- got  unclear, if it was correct}
Then  for some $n_0,n_1,\ldots, n_{m}$ we have 
$a R^{[m]}_{n_0}\circ R^{[m]}_{n_1} \circ \ldots \circ R^{[m]}_{n_m} b$. 
Then  for  $n=\max\{n_0,\ldots,n_m\}$,  $a R^{[m]}_{n+1} b$.  
Hence $a  R^{[m]} b$, and so
the $m$-closure $(X,R^{[m]})$ of $(X,R)$ validates \eqref{eq:m-property}.

If $S\supseteq R$ and $(X,S)$ is an $\Lm$-frame, then
that $S$ contains all $R^{[m]}_n$ follows by an easy induction on $n$.
\end{proof}
%\IS{Checked and modified May 30}

\begin{remark*}
For the case $m=1$, the weak   
closure can be defined in the following way:
\begin{equation}\label{eq:1closure}
R \cup (R^+ \setminus Id_X).
\end{equation}
Proposition \ref{prop:m-closure}  for the case $m=1$ was given in  \cite{esakia2001weak}. 

The expression \eqref{eq:1closure} has a simpler form than the one given in Definition \ref{def:m-closure-alt}.
Notice however that an immediate generalization  of \eqref{eq:1closure} for the case $m>1$, 
namely the relation 
$R \cup (S \setminus Id_X)$, 
where $S$ is the $m$-closure defined in Section \ref{sec:Filtr}, does not satisfy the closure conditions of Proposition \ref{prop:m-closure}. 

For example, let $m=2$. Consider a three-element irreflexive antisymmetric cycle $(X,R)$. This is an $\logicts{L}_2$-frame, so $R^{[2]}=R$. 
However, the $m$-closure $S$ of $R$ is $X\times X$, and so $R\cup (S \setminus Id_X)$ is not equal to $R$. 
\end{remark*}

%\IShLater{Explain why the $m$-closure without diagonal does not work; example $m=2$ and triangle}
%\begin{remark}
%In the case $m=1$, the weak $m$-closure 
%is the transitive closure without the %diagonal. without the diagonal would  of the relation 
%A straightforward generalization of \eqref{} 
%\end{remark}
\medskip

The following  fact  is immediate from Definition \ref{def:m-closure-alt}.

\smallskip 

\begin{proposition}\label{prop:m-closure-irrelf} Let $F$ be a frame, $m<\omega$. A point $a$ of $F$ is reflexive in $F$
iff $a$ is reflexive in the weak $m$-closure of $F$.
\end{proposition}

\smallskip

\begin{proposition}\label{prop:m-closure-reduce}
$R^{[m]}\subseteq \bigcup_{l\in \omega} R^{ml+1}$.
\end{proposition}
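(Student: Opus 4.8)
The plan is to recognize the right-hand side $\bigcup_{l\in\omega} R^{ml+1}$ as a relation that is already closed under $(m+1)$-fold composition, and then to run an induction on the stages $R^{[m]}_n$. Write $T=\bigcup_{l\in\omega} R^{ml+1}$. Since $ml+1\equiv 1\mod m$ for every $l$, the relation $T$ is exactly the set of pairs $(a,b)$ for which $aR^k b$ holds for some positive $k$ with $k\equiv 1\mod m$. The goal is then to prove $R^{[m]}\subseteq T$, and since $R^{[m]}=\bigcup_{n\in\omega} R^{[m]}_n$, it suffices to show $R^{[m]}_n\subseteq T$ for every $n$ by induction on $n$.

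The key observation, which drives the whole argument, is that $T^{m+1}\subseteq T$. Indeed, a pair $(a,b)\in T^{m+1}$ is obtained by concatenating $m+1$ consecutive $T$-steps; each such step is an $R$-path whose length is congruent to $1$ modulo $m$, so the concatenated path has length $k=k_0+\ldots+k_m$ with each $k_i\equiv 1\mod m$. Hence $k\equiv m+1\equiv 1\mod m$ and $k\geq m+1>0$, which gives $aR^k b$ with $k\equiv 1\mod m$, i.e.\ $(a,b)\in T$. Notice that this already holds before removing the diagonal, so the subtraction of $Id_X$ in Definition \ref{def:m-closure-alt} is harmless for the inclusion.

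With this in hand the induction is immediate. For the basis, $R^{[m]}_0=R=R^1\subseteq T$ (take $l=0$). For the step, assume $R^{[m]}_i\subseteq T$ for all $i\leq n$; then $\bigcup_{i\leq n} R^{[m]}_i\subseteq T$, and therefore
$$R^{[m]}_{n+1}=\Big(\bigcup_{i\leq n} R^{[m]}_i\Big)^{m+1}\setminus Id_X\subseteq T^{m+1}\subseteq T.$$
Taking the union over $n$ yields $R^{[m]}\subseteq T$, as required. The only real content is spotting the invariant ``length modulo $m$'' and checking that $m+1\equiv 1\mod m$; once that is isolated, both the closure property $T^{m+1}\subseteq T$ and the induction are routine, so I do not anticipate any genuine obstacle.
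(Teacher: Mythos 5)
Your proof is correct and follows essentially the same route as the paper's: an induction on the stages $R^{[m]}_n$ driven by the observation that concatenating $m+1$ paths of length $\equiv 1 \pmod m$ yields a path of length $\equiv 1 \pmod m$. Isolating this as the closure property $T^{m+1}\subseteq T$ is a slightly cleaner packaging than the paper's (which informally writes the inclusion as if one tuple $l_0,\dots,l_m$ served uniformly), but the underlying argument is the same.
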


\begin{proof}
%AK % By induction on $n$, we show that $a R^{[m]}_n b$ for all $n<\omega$. 
%AK added 
%We need to show that $R^{[m]}_n \subseteq %\bigcup_{l\in \omega} R^{ml+1}$ for all $n$. %We do it by induction on $n$. 
%IS: I suggest a shorter wording: 
By induction on $n$, we show that 
$R^{[m]}_n \subseteq \bigcup_{l\in \omega} R^{ml+1}$
for all $n<\omega$. 
The basis is trivial.
Let $n>0$.
%We have:
%$R^{[m]}_{n+1}\subseteq (\bigcup_{i\le n} R^{[m]}_{i})^{m+1}.$
By the induction hypothesis,
for some $l_0,\ldots, l_{m}$ we have:
$R^{[m]}_{n}\subseteq
R^{ml_0+1}\circ R^{ml_1+1}\circ \ldots \circ R^{ml_m+1}=
R^{mk+1}$, where $k= \sum_{i\leq m}l_i+1$.
\end{proof}
\hide{
\begin{proof}
    Let $aR^{[m]} b$ then $a R^{[m]}_k b$ for some $k$. We prove by induction on $k$ that $(a,b) \in \bigcup_{n\in \omega} R^{mn+1}$.

    For $k = 0$ it is obvious.

    Now let $a R^{[m]}_{k+1} b$ then $a S^{m+1} b$ for $S=\bigcup_{i\le k} R^{[m]}_{i}$:
    $$
    a R^{[m]}_{i_1} a_1 R^{[m]}_{i_2} \ldots R^{[m]}_{i_m} a_m R^{[m]}_{i_{m+1}} b.
    $$
    By induction hypotheses
    $$
    a R^{j_1m+1} a_1 R^{j_2m+1} \ldots R^{j_m m+1} a_m R^{j_{m+1} m + 1} b.
    $$
    Hence,
    $$
    a R^{(j_1 + j_2 + \ldots+j_m)m+m+1} b \iff a R^{(j_1 + j_2 + \ldots+j_m +1)m+1} b. \qedhere
    $$
\end{proof}
\ISh{I believe that without points this would look much simpler; the fact is trivial, in fact:}}
%\ISh{An alternative proof is hidden here}

\subsection{\texorpdfstring{$\Lm$-frames}{Lm-frames} admit filtration}
The proof is based on the following two components. 

The first is the existence of the weak $m$-closure, which  will be applied to the minimal filtered relation.

The second crucial component is the local tabularity of the logic of $\Lm$-clusters: it will be used to define the equivalence.

A logic $\vL$ is {\em locally tabular},
if each of its finite-variable fragments contains only
a finite number of pairwise nonequivalent formulas.
In algebraic terms, it means that each finitely generated free (or Lindenbaum-Tarski) algebra of $L$ is finite.

\smallskip 

\begin{theorem}\label{thm:LmclastersLF}\cite{LocalTab16AiML}
For every $m>0$, the logic of $\Lm$-clusters is locally tabular.
\end{theorem}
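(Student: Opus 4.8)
The plan is to prove local tabularity of the logic of $\Lm$-clusters by exhibiting, for each number of variables $k$, a uniform finite bound on the size of clusters that need to be considered, or equivalently a finite bound on the number of pairwise non-isomorphic finitely-generated algebras. Since local tabularity is equivalent to finiteness of the $k$-generated free algebra, the strategy is to show that every $\Lm$-cluster carries only boundedly many distinguishable points once we fix a valuation of $k$ variables. First I would classify $\Lm$-clusters using Proposition \ref{prop:irregAiML}: an $\Lm$-cluster is either $m$-regular (so $R^{m+1}\subseteq R$, making it a transitive cluster, i.e.\ an $\LK{4}$-cluster where $R\cup Id$ is universal) or $m$-irregular, in which case by Proposition \ref{prop:irregAiML} it is either a small cyclic frame on at most $m+1$ points or again satisfies $R\cup Id_X = X\times X$.

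The key observation is that in a cluster the relevant semantic structure is very simple. In the universal case $R\cup Id_X = X\times X$, the relation $\Di$ on the algebra collapses: for any nonempty subset $Y$, $\Di_R(Y)$ is essentially all of $X$ (or $X$ minus possibly the single reflexive/irreflexive distinction), so the modal algebra generated by $k$ subsets is a Boolean algebra with at most a bounded modal overhead. Concretely, the value of $\Box\vf$ and $\Di\vf$ at a point depends only on whether $\vext(\vf)$ is empty, all of $X$, or something in between, together with the local reflexivity of the point. Hence two points of a universal cluster satisfying the same variables and having the same reflexivity type satisfy all the same formulas, which gives a finite bound (roughly $2^{k}$ times a constant) on the number of modally distinguishable points. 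In the small cyclic case the cluster already has at most $m+1$ points, so there is nothing to bound. The $m$-regular (transitive) case reduces to the classical fact that $\LK{4}$-clusters are locally tabular: a transitive cluster is an $\mathbf{S5}$-like frame where $\Di$ again depends only on emptiness of the valuation set, and the argument is the same bounded-distinguishability count.

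Assembling these, I would argue that for a fixed finite $\Gamma$ (or fixed $k$ variables) the equivalence $\sim_\Gamma$ restricted to any single $\Lm$-cluster has index bounded by a function of $k$ and $m$ only, independent of the cluster. The passage from a bound on distinguishable points to finiteness of the free algebra is then standard: the $k$-generated free algebra of the logic embeds into a product of the finitely many (up to isomorphism) $k$-generated subalgebras of cluster algebras, each of bounded size, so it is finite. The main obstacle I expect is handling the interaction between reflexivity types and the modal operator cleanly across all three cases simultaneously — in particular verifying that the universal-cluster case genuinely collapses $\Di$ to depend only on the emptiness type of the argument set, and confirming that the irreflexive-versus-reflexive distinction of individual points contributes only a bounded (rather than cluster-size-dependent) number of additional types. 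Since this theorem is quoted from \cite{LocalTab16AiML}, I would ultimately cite that reference for the full verification rather than redo the case analysis in detail, but the sketch above captures the mechanism: $\Lm$-clusters are so structurally constrained by Proposition \ref{prop:irregAiML} that the modal operator becomes almost trivial on them, forcing local tabularity.
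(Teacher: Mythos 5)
The paper does not prove this theorem; it imports it from \cite{LocalTab16AiML}, so your final fallback of citing that reference is exactly what the authors do. However, the sketch you offer as ``capturing the mechanism'' has a concrete error in the $m$-regular case, and since that is the only case requiring real work, the sketch should not be presented as a proof outline. You assert that an $m$-regular cluster ($R^{m+1}\subseteq R$) is ``a transitive cluster, i.e.\ an $\LK{4}$-cluster where $R\cup Id$ is universal,'' so that $\Di$ depends only on whether the argument set is empty. That is false: $R^{m+1}\subseteq R$ does not imply $R^2\subseteq R$. For instance, take $m=3$ and the frame on $X=B_0\cup B_1\cup B_2$ with $aRb$ iff $a\in B_i$, $b\in B_{i+1 \bmod 3}$, where the blocks $B_i$ are arbitrarily large. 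Then $R^4=R$, so this is a $3$-regular $\logicts{L}_3$-cluster, yet it is not transitive, $R\cup Id_X$ is not universal, and $\Di Y=B_0$ for any non-empty $Y\subseteq B_1$ -- the operator is nowhere near ``emptiness-dependent.'' So the bounded-distinguishability count you run in the universal case simply does not apply here, and the reduction to the classical $\LK{4}$/\textbf{S5} fact collapses.

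The two cases you do handle are fine: when $R\cup Id_X=X\times X$ the generated algebra is essentially the Boolean algebra of the generators (since $\Di$ of any element is $\emptyset$, $X$, or the complement of an irreflexive singleton already present), and the small cyclic case is bounded outright. Local tabularity does still hold for $m$-regular clusters -- in the example above $\Di$ of anything is a union of the at most $m$ blocks, so finitely generated subalgebras stay bounded -- but that is a different mechanism from the one you describe, and establishing it for \emph{all} $m$-regular $\Lm$-clusters (which need not literally be blown-up cycles) is precisely the non-trivial content of the cited result. Either repair the $m$-regular case with an argument of this block-counting kind, or present the theorem purely as a citation without the sketch.
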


\medskip

\begin{theorem}\label{thm:ADF-Lm}
For every $m>0$,
the class of $\Lm$-frames admits filtration.
\end{theorem}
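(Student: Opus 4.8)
The plan is to imitate the proof of Theorem~\ref{thm:wK4AF}, with two substitutions dictated by the preceding sections: the transitive closure is replaced by the weak $m$-closure, and the basic fact \eqref{eq:basicWk4} about weakly transitive frames is replaced by its $\Lm$-analogue, Proposition~\ref{prop:Lm-diffClusters}. Fix a model $M=(F,\theta)$ on an $\Lm$-frame $F=(X,R)$ and a finite $\Sub$-closed $\Gamma$, and put $\Gamma(a)=\{\vf\in\Gamma\mid M,a\mo\vf\}$. The refining equivalence must record the internal structure of each cluster, not merely the set of $\Gamma$-types it realizes. For a cluster $C$ I would form the subalgebra $A(C)$ of $\Alg(F\restr C)$ generated by $\{\vext(\vf)\cap C\mid \vf\in\Gamma\}$, and set $a\approx b$ iff there is an isomorphism $A([a]_R)\to A([b]_R)$ sending each generator $\vext(\vf)\cap[a]_R$ to $\vext(\vf)\cap[b]_R$. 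Let $\equiv\;=\;\sim_\Gamma\cap\approx$ and $\ff{X}=X/{\equiv}$. Each $A(C)$ is an $|\Gamma|$-generated algebra of an $\Lm$-cluster, so by the local tabularity of the logic of $\Lm$-clusters (Theorem~\ref{thm:LmclastersLF}) only finitely many such algebras occur up to generator-preserving isomorphism; since $\sim_\Gamma$ already has finite index, $\ff{X}$ is finite.

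For the filtered relation I would take $\ff{R}=(R_\equiv)^{[m]}$, the weak $m$-closure of the minimal filtered relation $R_\equiv$ on $\ff{X}$. Proposition~\ref{prop:m-closure} immediately gives that $(\ff{X},\ff{R})$ is an $\Lm$-frame and that $R_\equiv\subseteq\ff{R}$, while Proposition~\ref{prop:m-closure-irrelf} shows that the reflexive loops of $R_\equiv$ are preserved. Hence conditions (1)--(2) of Definition~\ref{def:epi} and the lower bound of condition (3) hold by construction, and the whole theorem reduces to the upper bound $\ff{R}\subseteq R_{\equiv,\Gamma}$.

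To establish this inclusion I would proceed as in the $\wK4$ case but keep track of residues modulo $m$. Suppose $[a]\,\ff{R}\,[b]$ with $[a]\neq[b]$, $\Di\vf\in\Gamma$ and $M,b\mo\vf$; we must show $M,a\mo\Di\vf$. By Proposition~\ref{prop:m-closure-reduce} there is a $k$ with $[a]\,(R_\equiv)^{mk+1}\,[b]$, and I would induct on the length of this path, decomposing it at the first $\equiv$-class whose $\approx$-type differs from that of $a$. This yields a genuine edge $uRv$ in $F$ with $u\approx a$ and $v\not\approx a$, so $[u]_R\neq[v]_R$, and (by the induction hypothesis on the tail from $[v]$ to $[b]$) $M,v\mo\vf\vee\Di\vf$. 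If $[u]_R$ is $m$-irregular, Proposition~\ref{prop:LmIrregular} forces every point of $[u]_R$ to reach the $\vf$-witness, so $\Di\vf$ holds throughout $[u]_R$; since $a\approx u$ implies that $[a]_R$ and $[u]_R$ realize the same $\Gamma$-types and $\Gamma(a)$ is realized in $[u]_R$, we get $\Di\vf\in\Gamma(a)$, exactly as in Theorem~\ref{thm:wK4AF}. If $[u]_R$ is $m$-regular, $\Di\vf$ need not hold at all of $[u]_R$; here I would use the algebra isomorphism $A([a]_R)\cong A([u]_R)$ together with the modular control of path lengths (invoking Proposition~\ref{prop:Lm-diffClusters}, and Propositions~\ref{prop:LMregularC} and~\ref{prop:sel:chain} for the regular cluster) to match the internal configuration of the two clusters and produce an actual $R$-successor of $a$ satisfying $\vf$.

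The main obstacle is the upper bound $\ff{R}\subseteq R_{\equiv,\Gamma}$, and within it the $m$-regular case. For $\wK4$ the transfer is easy because any two distinct points of a cluster are $R$-related and every cross-cluster step is a single edge, so a $\vf$-witness found anywhere in the cluster of $a$ is automatically seen by $a$; for $m>1$ neither holds (already the irreflexive $(m{+}1)$-cycle is a single regular cluster with no chords), so one must exploit the finer $\approx$-equivalence and the residue bookkeeping along the $(R_\equiv)^{mk+1}$-path. This is precisely the point where the coarse set-of-types equivalence that suffices for $\wK4$ has to be strengthened to the cluster-algebra equivalence $\approx$, and the finiteness of $\ff{X}$ then rests on the local tabularity supplied by Theorem~\ref{thm:LmclastersLF}.
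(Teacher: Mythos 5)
Your overall architecture matches the paper's: a cluster-algebra equivalence made finite by Theorem~\ref{thm:LmclastersLF}, the weak $m$-closure of the minimal filtered relation, and a reduction of everything to the inclusion in the maximal filtered relation. But there are two concrete gaps. First, generating $A(C)$ and inducing the equivalence from $\Gamma$ alone is not enough: the residue bookkeeping you invoke requires transferring facts of the form $M,v\mo\Di^{j}\vf$ ($1\le j\le m$) from one representative of an $\equiv$-class to another, and neither $\sim_\Gamma$ nor your $\approx$ preserves these formulas (the algebra $A(C)$ only sees $\Di$ relativized to the cluster, not witnesses outside it). The paper therefore works with $\Delta=\Gamma\cup\{\Di^i\vf\mid i\le m\ \&\ \Di\vf\in\Gamma\}$ throughout, exactly as in Gabbay's construction for $\Di^{m+1}p\imp\Di p$; without this expansion the inductive propagation of $\Di^{\rem{i}}\vf$ along an all-regular path (Lemma~\ref{lem:filtr-regularPath}) does not go through. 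Second, your equivalence $\equiv\;=\;\sim_\Gamma\cap\approx$ is too coarse. The paper refines it by requiring that some generator-preserving isomorphism $A([a]_R)\to A([b]_R)$ carry the \emph{atom} of $a$ to the atom of $b$; this is what makes Lemma~\ref{lem:proofLMAF} work, i.e.\ lets one lift an $R^n$-path inside a cluster to an $R^n$-path starting at any prescribed point of the image atom. Knowing only that the algebras are isomorphic and that $a,b$ satisfy the same $\Gamma$-formulas does not pin down where $a$ and $b$ sit relative to the modal combinations of the generators, so the lifting fails.

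The place where both deficiencies bite is a case your sketch silently assumes away: you decompose the $(R_\equiv)^{mk+1}$-path at ``the first $\equiv$-class whose $\approx$-type differs from that of $a$,'' but such a class need not exist. When every point along the path is $\approx$-equivalent to $a$ (the paper's Cases~1 and~2a), there is no cross-cluster edge $uRv$ to exploit, and for $m>1$ one cannot conclude $aRc$ from $a\neq c$ and $c\in[a]_R$ as in the $\wK4$ proof. The paper handles this via Lemma~\ref{lem:same-cluster} and the homogeneous-cluster analysis of Lemma~\ref{lem:filtr-same-cluster}, which shows that on such a cluster the weak $m$-closure adds nothing to the minimal filtered relation --- and that argument is exactly where the atom-matching version of $\equiv$ is indispensable. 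As written, your proof covers the $a\not\approx b$ branch (essentially the paper's Case~2 via $\vf$-saturation and Proposition~\ref{prop:LmIrregular}) but leaves the homogeneous branch, and the all-regular branch, without a working mechanism.
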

\begin{proof}
Let $M=(F,\theta)$ be a model over an $\Lm$-frame $F=(X,R)$, $\Gamma$ a finite $\Sub$-closed set
of formulas. Let $\Delta=\Gamma\cup \{\Di^i\vf \mid i\leq m\, \&\, \Di\vf \in \Gamma\}$.

For a point $a$ in $M$, let
$\Delta(a)=\{\vf\in \Delta \mid M,a\mo\vf \}.$
On $X$, put
$$a\sim b \tiff \Delta(a)=\Delta(b).$$
Hence, $\sim$ is the equivalence $\sim_\Delta$.
In \cite{Gabbay:1972:JPL}, this equivalence was used to construct filtrations for the logics
given by the axioms $\Di^{m+1}p\imp \Di p$;
for the case of formulas
$\Di^{m+1}p\imp \Di p\vee p$, we need a finer equivalence.

%AK added
%Here and further by \emph{algebras} we mean Boolean algebras with operators.
%
For a cluster $C$ in $F$, we define the modal algebra $A(C)$ as follows. 
%Let $F_C$ be the restriction of
%$F$ to $C$;
Consider the algebra $\Alg(F\restr C)$ of the frame $F\restr C$.
 $A(C)$ is defined as the subalgebra of the modal algebra $\Alg(F\restr C)$ generated by the set $$\{\valext(\vf)\cap C\mid \vf\in \Delta\}.$$
Let $k=|\Delta|$.  By Theorem \ref{thm:LmclastersLF}, the logic $L$ of $\Lm$-clusters is locally tabular, and
so the $k$-generated free algebra $A$
of $L$ is finite; let $N$ be the size of $A$.
Every algebra $A(C)$ is $k$-generated, so it is a homomorphic image of $A$. Hence,
for every cluster $C$ in $F$, we have:
\begin{equation}\label{eq:epi-size}
  \text{ the size  of $A(C)$ is not greater than $N$.}
\end{equation}

Let $C,D$ be clusters in $F$ and let $f:A(C)\to A(D)$ be an isomorphism of algebras;
we say that $f$ is a {\em $\Delta$-morphism}, if 
for all $\vf\in\Delta$,
\begin{equation}\label{eq:approx-def}
f(\valext(\vf)\cap C)=\valext(\vf)\cap D.
\end{equation}
So $\Delta$-morphism  is an isomorphism of modal algebras endowed with $k$ constants. 

We define the relation $\approx$ on $X$ as the set of pairs $(a,b)$ such that
there exists a $\Delta$-morphism $f:A([a]_R)\to A([b]_R)$.
\IS{Such an isomorphism is unique: if two homomorphisms coincide on generators, they are equal.} 
Clearly, $\approx$ is an equivalence. 
From $\eqref{eq:epi-size}$, it follows  that  
there are only finitely many non-isomorphic algebras of the form $A([a]_R)$; hence, $X/{\approx}$ is finite.\IShLater{It is exactly  about $A([a]_R)$,  rather about its expansions with constants. Still signature is finite, so need not care too much.}

\hide{
\footnote{
$\Delta$-morphism  can be viewed as an isomorphism of relational structures in a  signature with a binary relation symbol and finitely many unary predicate symbols. Namely, for a cluster $C$, define $S(C)$
as the structure with the binary relation $R\restr C$, and subsets induced on $C$ by the formulas in $\Delta$.
%elements of the algebra $A(C)$. 
Then $a\approx b$ means that $S([a]_R)$ and  $S([b]_R)$ are isomorphic in the usual model-theoretic sense. \IShLater{DC} \IS{This is nonsense! $C$ and $D$ can be of different Size!}
}
}

\IS{May 2025. Seems like this is the same as the intersection of $\approx$ and $\sim$ (idea we started with?). - No, why?!}
Let $[a]_\at$ be the atom\footnote{Atoms are considered in the standard Boolean sense, as minimal non-zero elements of the algebra.} of the algebra
$A([a]_R)$ that contains $a$; since $A([a]_R)$ is finite,
$[a]_\at$ is properly defined.
We put
$a\equiv b$, if $[b]_\at=f([a]_\at)$ for a $\Delta$-morphism $f:A([a]_R)\to A([b]_R)$.
Clearly, $\equiv$ is an equivalence that refines $\approx$. 
If $E$ is the $\approx$-class of $a$, then the number of $\equiv$-classes containing in $E$ is finite, since the number of atoms in $A([a]_R)$ is finite.
Since $X/{\approx}$ is finite, $X/{\equiv}$ is finite as well. 
\IShLater{May-June 25: OK, this is fine: if there are more elements in $\approx$-class  than the number of atoms, 
two are in one $\equiv$-class. Rechecked on paper (p. 15). Do we want to  have more details?}

%It follows from $\eqref{eq:epi-size}$ that 
%AK begin попросили уточнить про конечность и я добавил следующее:
%there are only finitely many non-isomorphic algebras of the form $A([a])_R$; and each algebra is finite so
%AK end
%$X/{\equiv}$ is finite. 
%\ISh{Do we need more details here? Perhaps, some arithmetic? I only have very approximate estimations.}
%\AK{I think its OK.}

The quotient $X/{\equiv}$ will be the carrier of the filtration.
For $a\in X$, let $[a]$ denote the $\equiv$-class of $a$.

\begin{lemma}\label{lem:proof-refines}%\ISh{May 2025: This is simpler! - Do not see why...} 
If $M,a\mo \vf$ for some $\vf\in \Delta$, and $a\equiv b$, then $M,b\mo \vf$.
\end{lemma}
\begin{proof}
Let $a\equiv b$.
For a $\Delta$-morphism $f$, we have $f:A([a]_R)\to A([b]_R)$ and $[b]_\at=f([a]_\at)$.
Assume that $M,a\mo \vf$ for some $\vf \in \Delta$.
Then $[a]_\at\subseteq \valext(\vf)\cap [a]_R $, and so
$f([a]_\at)\subseteq f(\valext(\vf)\cap [a]_R )$. Then
$$b\in [b]_\at=f([a]_\at)\subseteq f(\valext(\vf)\cap [a]_R )=\valext(\vf)\cap [b]_R;$$
the latter step is given by \eqref{eq:approx-def}. So $M,b\mo \vf$.
\end{proof}

It follows from this lemma that if $a\equiv b$, then $\Delta(a)=\Delta(b)$. So we have:
\begin{equation}
 \equiv\; \subseteq \;\sim.
\end{equation}

Let $\fm{R}$ denote the minimal filtered relation on $X/{\equiv}$:
$$[a]\fm{R}[b] \tiff \EE a'\equiv  a\, \EE b'\equiv b\, (a'Rb').$$

 Consider the frame $(X{/}{\equiv}, \fm{R})$ and its weak $m$-closure
$\ff{F}=(X{/}{\equiv}, \fm{R}^{[m]})$, introduced in Definition \ref{def:m-closure-alt}.
By Proposition \ref{prop:m-closure},
$(X{/}{\equiv}, \fm{R}^{[m]})$ is an $\Lm$-frame,  and
$\fm{R}^{[m]}$ contains $\fm{R}$. So we only need to check that
$\fm{R}^{[m]}$ is contained in the maximal filtered relation $R_{\equiv, \Gamma}$. 
This proof is more convoluted than the one for $\wK4$, and we start with some auxiliary statements.

\begin{lemma}\label{lem:proofLMAF}
Assume that $C,D$ are clusters in $F$,
$f:A(C)\to A(D)$ is an isomorphism, $a,b\in C$,
$a'\in f([a]_\at)$. Then for any $n\geq 0$, if $aR^n b$, then
$a'R^n b'$ for some $b'\in f([b]_\at)$.  %\IShLater{ perhaps, simplify? It seems that now it is only needed for $n=1$.}
\end{lemma}
\begin{proof}
%Let $A_1$ and $A_2$ be the algebras of the clusters of $a$ and $a'$, respectively.
Let $\Di_1,\Di_2$ be the modal operations in $A(C)$ and $A(D)$\IShLater{test}, respectively.
\hide{
The value of a formula $\vf$ (considered as an algebraic term) in $A$ and $A'$ are denoted
as $\valext(\vf)$ and $\valext(\vf)'$, respectively. }
Consider the element $[a]_\at\cap \Di_1^n [b]_\at$ of $A(C)$. This set is non-empty because $a R^n b$.
Since $[a]_\at$ is an atom,
we have $[a]_\at\subseteq \Di_1^n [b]_\at$, and
so $f([a]_\at) \subseteq f(\Di_1^n [b]_\at) = \Di_2^n f([b]_\at)$. Hence $a'R^n b'$ for some $b'\in f([b]_\at)$.
\end{proof}

\begin{lemma}\label{lem:same-cluster}
If $a\approx a'$, then $[a]$ and $[a']$ belong to the same cluster in $(X{/}{\equiv}, \fm{R})$:
$[a]\fm{R}^*[a']$ and $[a']\fm{R}^*[a]$.
\end{lemma}
\begin{proof}
Let $f$ be a $\Delta$-morphism between $A([a]_R)$ and $A([a']_R)$.
For some
 $b\in [a]_R$ we have $f([b]_\at)=[a']_\at$, and for some $c\in [a']_R$, 
we have $f([a]_\at)=[c]_\at$.  
Since $a,b$ are in the same $R$-cluster, we have $aR^n b$ for some $n\geq 0$. By Lemma
\ref{lem:proofLMAF}, for some $b'\in [a']_\at$ we have $c R^n b'$.
So $[c] \fm{R}^n [b']$.
Since $[c]=[a]$ and $[b']=[a']$, we have $[a]\fm{R}^n[a']$, and so $[a]\fm{R}^*[a']$.

Since $\approx$ is symmetric, we have $[a']\fm{R}^*[a]$.
\end{proof}

For $n>0$, let $\rem{n}= ((n-1) \mod m)+1$, where $\mod$ is the remainder operation; also put $\rem{0}=0$.
%We have for all $n>0$

Let $C$ be a cluster in $F$, $A = A(C)$, $\Di_A$ the modal operation on
$A$.
We say that $A$ is {\em $m$-regular}, if $\Di_A^{m+1}U\subseteq \Di_A U$ for each $U$ in $A$. 
Otherwise, $A$ is {\em $m$-irregular}. 
Clearly, if $F\restr C$ is $m$-regular, then $A(C)$ is $m$-regular, since    $A(C)$ is a subalgebra of $\Alg(F\restr C)$.\footnote{
An equivalent way to define the $m$-regularity of 
$A(C)$ is to consider the equivalence induced on $C$ by the elements of $A(C)$, and take the  minimal filtered relation on the quotient. Then $m$-regularity of $A(C)$ is equivalent to the validity of 
$\Di^{m+1}p\imp \Di p$ in the resulting quotient-frame.
%can be defined in terms of the validity of $\Di^{m+1}p\imp \Di p$ in the following frame:  
%consider the equivalence induced on $C$ by all elements of $A(C)$, and take the  minimal filtered relation on the quotient. 
}
%\IShLater{Operation on or in? Gratzer: on}  

\begin{lemma}\label{lem:filtr-regularPath}
Assume that $[a_n] \fm{R}  [a_{n-1}] \fm{R} \ldots \fm{R} [a_{0}]$ and
for all $i\leq n$, $A([a_i]_R)$ is $m$-regular.
If $\Di\vf\in \Gamma$ and $M,a_0\mo \vf$, then $M,a_i\mo \Di^{\rem{i}}\vf$ for all $i\leq n$.
\end{lemma}
\begin{proof}
This proof essentially follows the argument provided in \cite{Gabbay:1972:JPL} for the logic given by the axiom $\Di^{m+1}p\imp \Di p$.

By induction on $i$. The basis is given.
Let $i>0$. We have $aRb$ for some $a\in [a_i]$, $b\in [a_{i-1}]$. By induction hypothesis, $M,a_{i-1}\mo \Di^{\rem{i-1}}\vf$. We have $\Di^{\rem{i-1}}\vf\in \Delta$, because ${\rem{i-1}}\leq m$; hence
$M,b\mo \Di^{\rem{i-1}}\vf$. So $M,a\mo \Di^{\rem{i-1}+1}\vf$.
\IShLater{Rewrite for $i$, $i+1$}

%AK added
From the definition of $\rem{i}$, it follows that for $i>0$
\hide{
$$
\rem{i} = \begin{cases}
    1, &\text{if }  i-1  \text{ is a multiple of $m$};\\
    %\equiv 0\ (\text{mod}\ m);\\
    \rem{i-1} + 1, &\text{otherwise.}
\end{cases}
$$
}
$$
\rem{i-1}=m \text{\quad  iff \quad   $i-1$  is a multiple of $m$ and $i>1$}.
$$
%Note that $\rem{n} = m$ iff $n$ is a multiple of $m$ and $n>0$.
%$n \equiv 0$ (mod $m$) iff .

Consider two cases.

%Case 1. 
\smallskip 
First, assume 
that $\rem{i-1}<m$. Then $i-1$ is not a multiple of $m$ or $i=1$. In either case, 
$\rem{i-1}+1=\rem{i}$. 
%(in particular, $\rem{1-1}+1= \rem{1}$ by the definition of $\rem{i}$). 
Hence, $M,a \mo \Di^{\rem{i}} \vf$. 
Since $\rem{i}\leq m$, we have 
$\Di^{\rem{i}}\vf\in \Delta$. It follows that $M,a_i\mo \Di^{\rem{i}}\vf$.

%Case 2. 
\smallskip 
Now assume $\rem{i-1}=m$. In this case $\rem{i}=1$.  
We have $M,a\mo \Di^{m+1}\vf$, 
and so $aR^{m+1} c$ for some $c\in\valext{(\vf)}$. 

If $c\neq a$, we have $aR c$, and so $M,a\mo  \Di\vf$.

Suppose $a=c$. 
Put $U=\valext(\vf)\cap [a]_R$. 
Since $a\equiv a_i$, we have $a\approx a_i$, and so 
$A([a]_R)$ and $A([a_i]_R)$ are isomorphic. 
So $A=A([a]_R)$ is $m$-regular.
We have $a\in U$, and so $a\in \Di^{m+1}_A U$. 
Hence, $a\in \Di_A U$. 
Then $M,a\mo  \Di\vf$ in this case as well. 

\hide{
Suppose $a=c$. Then for some $d\in [a]_R$  we have $aR^mdRc$. 
Put $U=\valext(\vf)\cap [a]_R$. 
Since $a\equiv a_0$, we have $a\approx a_0$, and so 
$A([a]_R)$ and $A([a_0]_R)$ are isomorphic. 
So $A=A([a]_R)$ is $m$-regular.
We have $b\in \Di_A U$, and so $a\in \Di^{m+1}_A U$. 
Hence, $a\in \Di_A U$. 
Then $M,a\mo  \Di\vf$ as well in this case. 
}

Since $a\equiv a_i$, it follows that  $M,a_i\mo \Di\vf$, that is $M,a_i\mo \Di^{\rem{i}}\vf$. 
%\IShLater{This all should be simplified. Perhaps, we do not need regular algebras. - May-June 25:  I do not see how} 
\hide{

so we need to show that $M,a\mo \Di \vf$. 
We have $M,a\mo \Di^{m+1}\vf$, so $M,c\mo \vf$ for some $c\in R^{m+1}(a)$.
If $c\neq a$, $M,a\mo \Di \vf$. 
Assume $c=a$. 
For some $b\equiv a$, $[b]_R$ is $m$-regular. Consider a $\Delta$-morphism 
$f:A([a]_R)\to A([b]_R)$ with
$f([a]_\at)=[b]_\at$. By Lemma \ref{lem:proofLMAF}, $bR^{m+1}b'$ for some $b'\in [b]_\at$. 
Since $[b]_R$ is $m$-regular, $bRb'$. So $M,b'\mo \Di\vf$. Since $b'\equiv a$, we have $M,a\mo\Di \vf$.

%By Proposition \ref{prop:LMregularC},
%$M,a\mo \Di \vf$\todo{Why is it regular?}.
%Since $\Di\vf \in \Delta$, $M,a_i\mo \Di \vf$.

%\ISh{Later: do we need this comment? - (This argument was provided in \cite{GabbayFiltr}.)}
 }
\end{proof}

\begin{lemma}\label{lem:path-weakDi}
Let $\Di\vf \in \Gamma$, $[a]\fm{R}^*[b]$. If $M,b\mo \bigvee_{0\leq i\leq m} \Di^i \vf$,
then  $M,a\mo\bigvee_{0\leq i\leq m} \Di^i \vf$.
\end{lemma}
\begin{proof}
%AK added explanation:
%Let $[a] \fm{R}^n [b]$ and $M,b\mo \Di^i %\vf$. 
%We need to show that there exists $j$, %$0\le j \le m$ such that $M,a \mo \Di^j %\vf$. 

%IS: Changed the wording a bit:
Let $[a] \fm{R}^n [b]$ and $M,b\mo \Di^i \vf$ for $i\leq m$. 
We claim that $M,a \mo \Di^j \vf$ for some $j \le m$. 
By induction on $n$. The base $n=0$ is clear. 
Let $n>0$. Then $[a] \fm{R} [c] \fm{R}^{n-1} [b]$ for some $c$.  
Then for some $a' \equiv a$ and $c'\equiv c$ we have $a'Rc'$. By the induction hypothesis, $M,c' \mo \Di^l \vf$ for some $l\leq m$. It follows that $M,a' \mo \Di^{l+1} \vf$. If $l=m$, $M,a' \mo \Di \vf\vee \vf$. It follows that $M,a' \mo \Di^j \vf$ for some $j\leq m$. Then $\Di^j\vf \in\Delta$, and so 
$M,a \mo \Di^j \vf$.
%
%For the induction step, we need to show that if $[a] \fm{R} [b]$ and $M,b\mo \Di^i \vf$, then $M,a \mo \Di^j \vf$ for some $j$.
%
%By the definition of $\fm{R}$, there exists $a' \equiv a$ and $b' \equiv b$ such that $a'Rb'$. Since $\Di^i \vf \in \Delta$ $M,b' \mo \Di^i \vf$, if follows that $M,a' \mo \Di^{i+1} \vf$. If $i+1 \le m$ then we put $j=i+1$ and since $\Di^{i+1} \vf \in \Delta$ then $M,a \mo \Di^j \vf$. 
%
%If $i+1 = m+1$ then $M,a' \mo \Di \vf \lor \vf$, so $M,a' \mo \Di^j \vf$ for $j \in \{0,1\}$, and therefore $M,a \mo \Di^j \vf$. 
% By straightforward induction on the length of $\fm{R}$-path from $[a]$ to $[b]$.
\end{proof}

We say that $a\in X$ is {\em $\vf$-saturated}, if 
$M,a'\mo \bigwedge_{1\leq i\leq m}\Di^i\vf$ for
all $a'\in[a]_R$. 

\begin{lemma}\label{lem:irreg-filtr-new}
Let $\Di\vf\in\Gamma$. 
Assume that $[a]\fm{R}^* [b]$, and $b$ is $\vf$-saturated. Then $a$ is $\vf$-saturated.
\end{lemma}
\begin{proof}
First, consider the case $[a]\fm{R}[b]$. 

%The case $[a]_R=[b]_R$ is trivial, so we assume that $[a]_R\neq [b]_R$. 
We have $a_0 R b_0$ for some  $a_0\in [a]$, $b_0\in [b]$. 
For $i\leq m$, $\Di^i\vf \in \Delta$, and since \mbox{$b_0\equiv b\in [b]_R$}
%AK added
and $b$ is $\vf$-saturated, 
%===
we have
$M,b_0\mo \bigwedge_{1\leq i\leq m}\Di^i\vf$.
So we have $M,a_0\mo \Di^i \vf$ for $2\leq i \leq m+1$.
 
Let $a'\in[a_0]_R$. We claim  that 
\begin{equation}\label{eq:Lm-clustertransfer}
  M,a'\mo \bigwedge_{1\leq i\leq m}\Di^{i}\vf.
\end{equation}

Assume $[a_0]_R=[b_0]_R$. Since $b\equiv b_0$, there is a $\Delta$-morphism $f:A([a_0]_R)\to A([b]_R)$; 
%AK it seems that it should be a' instead of a_0
%so $a_0\equiv c$ 
so $a' \equiv c$
for some $c\in f([a_0]_\at)\subseteq [b]_R$.
%AK added
Since $b$ is $\vf$-saturated, for all positive $i\leq m$ we have $[b]_R \subseteq \bar\theta(\Di^i \vf)$ and so $c \in \bar\theta(\Di^i \vf)$,
%===
which implies \eqref{eq:Lm-clustertransfer}.

\smallskip 
Now assume that $[a_0]_R\neq [b_0]_R$. 
Consider two cases.

\smallskip 
In the first case, assume that the cluster frame $F\restr [a_0]_R$ is $m$-regular.  Since
\mbox{$M,a_0\mo \Di^{m+1}\vf$},  we have $M,a_0\mo \Di\vf$  by Proposition
\ref{prop:LMregularC}. By Proposition \ref{eq:Lm-trans-and-subfr},
$F\restr [a]_R$ is $m$-transitive, so we have $a'R^n a_0$ for some $n\leq m$. Then $M,a'\mo \Di^{n+i}\vf$ for $1\leq i\leq m$; by Proposition \ref{prop:LMregularC}, $M,a'\mo \Di^{i}\vf$ for $1\leq  i \leq m$.
\IShLater{Is this clear how to use this Proposition here?}

\smallskip 
In the second case, assume that $F\restr [a_0]_R$ is $m$-irregular. 
Since $[a_0]_R\neq [b_0]_R$, we have \eqref{eq:Lm-clustertransfer} in view of Proposition \ref{prop:LmIrregular}.
%AK added
Indeed,  $M, b_0 \models \Di\varphi $ and $M, b' \models \varphi $ for some $b'$ such that  $b_0 R b'$. We have  $b' \notin [a_0]_R$, since otherwise $[a_0]_R = [b_0]_R$. Furthermore, because $a' R^* b'$, Proposition \ref{prop:LmIrregular} implies that $a' R^i b'$ for any $i \ge 1$. Therefore, \eqref{eq:Lm-clustertransfer} follows.

\medskip 

So we have \eqref{eq:Lm-clustertransfer} for all $a'\in[a_0]_R$. 
%AK typo
If $d\in[a]_R$, then $d\equiv d'$ for some $d'\in [a_0]_R$; 
so $M,d\mo \Di^i \vf$ for $1\leq i\leq m$. 
\IShLater{Later: improve the exposition of this story.} 
%AK понятие saturated определялось для элементов  X, этому [b] не может быть saturated
%Hence, if $[a]\fm{R}[b]$ and $[b]$ is $\vf$-saturated, then $[a]$ is. 
Hence, if $[a]\fm{R}[b]$ and $b$ is $\vf$-saturated, then $a$ is. 

\smallskip 

The case $[a]\fm{R}^*[b]$ follows by straightforward induction on the length of $\fm{R}$-path from $[a]$ to $[b]$.  
\end{proof}

%We say that $a\in X$ is {\em $\vf$-saturated}, if
%$M,a'\mo \bigwedge_{1\leq i\leq m}\Di^i\vf$ for
%all $a'\in[a]_R$. 
%\begin{lemma}
%Let $\Di\vf\in\Gamma$. Assume $[a]\fm{R}^*[b]$ and $M,b\mo \bigvee_{0\leq \i\leq m} \Di^i \vf$, 
%then $M,a\mo \bigvee_{0\leq \i\leq m} \Di^i \vf$, 
%\end{lemma}
%\begin{proof}
%By a straightforward induction on   
%\end{proof}
 
\begin{lemma}\label{lem:path-irregular}
Assume that $a \not\approx b$, $A([c]_R)$ is $m$-irregular, and $[a]\fm{R}^*[c]\fm{R}^*[b]$.
If $\Di\vf \in \Gamma$ and $M,b\mo \vf$, then $a$ is $\vf$-saturated. 
\end{lemma}
\begin{proof}
%First, observe that if $[d]\fm{R}[b]$, then $M,d\mo \bigvee_{0\leq \i\leq m} \Di^i \vf$;
%this follows by straightforward induction on the length of $\fm{R}$-path from $[d]$ to $[b]$.

We consider two cases:   $c \not\approx b$ and $c \not\approx a$.

Assume $c \not\approx b$. 

Consider an $\ff{R}$-path from $[c]$ to $[b]$. 
Then $[c]\ff{R}^*[d]$ and $[d']\fm{R}^*[b]$ for some $d,d'$ such that $d R d'$, $c\approx d$, and $d'\not\approx d$: here $[d']$ is the first $\equiv$-class in the path 
such that $c\not \approx d'$. 
Since $A([c]_R)$ and $A([d]_R)$ are isomorphic, the latter algebra is $m$-irregular. It follows that 
the cluster frame $F\restr[d]_R$ is $m$-irregular. Since $d'\not\approx d$,   $[d]_R \neq [d']_R$.
Since $[d']\fm{R}^*[b]$, $M,d'\mo \bigvee_{0\leq i\leq m} \Di^i \vf$ by Lemma \ref{lem:path-weakDi}. 
So $M,d'' \models  \vf$ for some $d''\in R^*(d')$. Clearly, $d''$ is not in $[d]_R$. 
%%AK added
%So $M,d' \models \Di^i \vf$ for some $i$ with $0\le i \le m$.  
By Proposition \ref{prop:LmIrregular}.\ref{prop:LmIrregular:1}, for any $i> 0$, 
$\Di^i\vf $ is true at any point in $[d]_R$. 
%So $M,d \models \Di^j \vf$ for any $j\ge i$.
Hence,
$d$ is $\vf$-saturated. By Lemma \ref{lem:irreg-filtr-new}, $a$ is $\vf$-saturated. 

The case $c \not\approx a$ follows analogously via Proposition \ref{prop:LmIrregular}.\ref{prop:LmIrregular:2}.
\end{proof}
 
Let $\Theta$ be a cluster in $(X{/}{\equiv},\fm{R}^{[m]})$. We say that $\Theta$ is {\em homogeneous}, 
if for any $a,b$ such that $[a],[b]\in \Theta$ we have $a \approx b$, and $A([a]_R)$ is $m$-irregular.

\begin{lemma}\label{lem:filtr-same-cluster}
Let $\Theta$ be a homogeneous cluster in $(X{/}{\equiv}, \fm{R}^{[m]})$.
Then on $\Theta$,  $\fm{R}^{[m]}$ is included in $R_{\equiv,\Gamma}$:
for all $[a],[b]\in \Theta$,
\begin{equation}
\text{if }[a]\fm{R}^{[m]} [b], \text{ then } [a] R_{\equiv,\Gamma}[b].
\end{equation}
\end{lemma}
\begin{proof}
First, suppose that for some $[c],[d]\in \Theta$, we have $cRd$, and $[c]_R \neq [d]_R$.

Let $[a]\fm{R}^{[m]} [b]$, $\Di\vf \in \Gamma$, and $M,b\mo \vf$.
We have $[d] \fm{R}^* [b]$, so
by Lemma \ref{lem:path-weakDi}, $M,d\mo \bigvee_{0\leq i\leq m} \Di^i \vf$. Hence, 
$M,d'\mo  \vf$ for some $d'\in R^*(d)$. Notice that $d'$ is not in $[c]_R$.  
Then  by Proposition \ref{prop:LmIrregular}.\ref{prop:LmIrregular:1},
$c$ is $\vf$-saturated. We have $[a] \fm{R}^* [c]$, and by Lemma \ref{lem:irreg-filtr-new}, $a$ is $\vf$-saturated.
In particular,  $M,a\mo \Di \vf$. It follows that 
$[a] R_{\equiv,\Gamma}[b]$. 

\medskip

Now suppose that for all $[c],[d]\in \Theta$ with $cRd$, we have $[c]_R = [d]_R$.

By induction on $n$, we show that if $[a],[b]\in \Theta$, 
$[a]\fm{R}^n[b]$, and $a'\in [a]$, then  there exists $b'$ such that 
\begin{equation}\label{eq:homog}
    a' R^n b' \text{ and }b'\in [b]\cap [a']_R.
\end{equation} 
If $n=0$, then $[a]=[b]$; in this case, we put $b'=a'$. 
Let $n>0$. For some $c$, we have 
$[a]\fm{R} [c]\fm{R}^{n-1}[b]$.  
We have $a_0Rc_0$ for some $a_0\in[a], c_0\in [c]$. Then $[a_0]_R=[c_0]_R$.  
Since $a_0\equiv a'$, there is a $\Delta$-morphism $f: A([a_0]_R)\to  A([a']_R)$ such that 
$f([a_0]_\at)=[a']_\at$.  
By Lemma \ref{lem:proofLMAF}, $a' R c'$ for some $c'\in f([c_0]_\at)$.
It follows that $c'\equiv c_0$, and so $c'\equiv c$. By induction hypothesis, $c'R^{n-1} b'$ for some 
$b'\in[b]\cap [c']_R$. So $a'R^n b'$. Since $[c']_R=[a']_R$, \eqref{eq:homog} follows.

\smallskip 

Assume that $[a]\fm{R}^{[m]}_1 [b]$ for some $[a],[b]\in \Theta$, where $\fm{R}^{[m]}_1$ was given in Definition \ref{def:m-closure-alt}. 
We have $[a]\fm{R}^{m+1} [b]$, and $[a]\neq [b]$. 
By  \eqref{eq:homog}, we have $aR^{m+1} b'$ for some $b'\in [b]\cap [a]_R$. 
Since $[a]\neq [b]$, we have $a\neq b'$.  Hence, $aRb'$, and so $[a]\fm{R}[b]$.

\hide{
We have $[a]=[a_{m+1}] \fm{R} [a_{m}] \fm{R} \ldots \fm{R} [a_0]=[b]$ for some $a_i$, $i\leq m$, and $[a]\neq [b]$.

It follows from Lemma \ref{lem:proofLMAF} that there are points $c_0,\ldots, c_{m+1}$ in the cluster $[a_0]_R$ such that $c_{m+1} R c_{m} R \ldots R c_0$ and $[c_i]=[a_i]$ for all $i\leq m+1$.\IShLater{Many details are missing here: 
sequence of embedding, and reconstruction of relations. Two inductions, quite simple though.}
Since $[a_{m+1}]\neq [a_0]$, $[a_{m+1}]=[c_{m+1}]$, and $[a_0]=[c_0]$, we conclude that $c_{m+1}\neq c_0$. Since 
 $c_{m+1}R^{m+1} c_0$, we have $c_{m+1}Rc_0$, and so $[a]=[c_{m+1}] \fm{R} [c_0]=[b]$. 
 }

So on $\Theta$, $\fm{R}^{[m]}_1$ is included in $\fm{R}$. 
%AK Попросили пояснить следующее предложение
\IShLater{This is elegant! But, in fact, there are more details to discuss: we are working with a restriction on a cluster, not exactly with $\ff{R}$ and its closure.}
By Definition \ref{def:m-closure-alt}, $\fm{R}^{[m]}_{n+1} = (\bigcup_{i\le n}\fm{R}^{[m]}_i)^{[m]}_1$.
So by induction on $n$ it follows that on $\Theta$, 
$\fm{R}^{[m]}_n $ is included in $\fm{R}$ for all $n<\omega$.
Hence,  on $\Theta$, $\fm{R}^{[m]}$ coincides with the minimal filtered relation, and so is contained in the maximal filtered relation.
\end{proof}

\obsolete{

\begin{lemma}\label{lem:filtr-embedding}
Assume $k\geq0$, $[a_0] \fm{R}^{[m]} [a_1] \fm{R}^{[m]} \ldots \fm{R}^{[m]} [a_k]$, and all $a_i, i\leq k$
are $\approx$-equivalent. Then there are points $c_0,\ldots, c_k$ in the cluster $[a_k]_R$ such that
$[c_0] \fm{R}^{[m]} [c_1] \fm{R}^{[m]} \ldots \fm{R}^{[m]} [a_k]$
\end{lemma}

\begin{lemma}[Obsolete]\label{lem:irreg-filtr-old}
Let $\Di\vf\in\Gamma$. Assume that $[a]\fm{R} [b]$, and for all $b'\in[b]_R$, for all $i\leq m$ we have $M,b'\mo \Di^i\vf$.
Then for all $a'\in[a]_R$, for all $i\leq m$ we have $M,a\mo \Di^i\vf$.
\end{lemma}
\begin{proof}
%The case $[a]_R=[b]_R$ is trivial, so we assume that $[a]_R\neq [b]_R$.
\todo{Repair}
We have $a_0 R b_0$ for some  $a_0\in [a]$, $b_0\in [b]$.
So we have $M,a_0\mo \Di^i \vf$ for $2\leq i \leq m+1$.

Let $a'\in[a]_R$.

Consider two cases.

First, assume that the cluster frame $F\restr [a]_R$ is $m$-regular.  Since
$M,a_0\mo \Di^{m+1}\vf$,  we have $M,a_0\mo \Di\vf$  by Proposition
\ref{prop:LMregularC}. By Proposition \ref{eq:Lm-trans-and-subfr},
$F\restr [a]_R$ is $m$-transitive, so we have $a'R^n a_0$ for some $n\leq m$. Then $a'\mo \Di^{n+i}\vf$ for $1\leq i\leq m$; by Proposition \ref{eq:Lm-trans-and-subfr}, $M,a'\mo \Di^{i}\vf$ for $1\leq  i \leq m$.
\ISh{Is this clear how to use this Proposition here?}

Now, assume that $F\restr [a]_R$ is $m$-irregular. Then $M,a'\mo \Di^{i}\vf$ for $1\leq  i \leq m$ by Proposition \ref{prop:LmIrregular}.
\end{proof}

\begin{lemma}[Obsolete]\label{lem:filtr-irregular-old}
Assume that $[a_n] \fm{R} [a_{n-1}] \fm{R} \ldots \fm{R} [a_{0}]$ and for some
$i\leq n$, the cluster frame $F\restr [a_i]_R$ is $m$-irregular.
Also assume that $a_i\not\approx a_j$ for some $i,j\leq n$.
Then if $\Di\vf\in \Gamma$ and $M,a_0\mo \vf$, then $M,a_n\mo \Di^{i} \vf$ for all $1\leq i\leq m$.
\end{lemma}
\begin{proof}
Let $k=\min\{i\leq n\mid F\restr [a_i]_R  \text{ is $m$-irregular}\}$. So for $i<k$, all
$F\restr [a_i]_R$. By Lemma

Start with Proposition \ref{prop:LmIrregular}, then use Lemma \ref{lem:irreg-filtr}. \todo{details}
\end{proof}

}

Now we show that
$\fm{R}^{[m]}$ is contained in the maximal filtered relation $R_{\equiv,\Gamma}$.

Recall that $\fm{R}\subseteq R_{\equiv,\Gamma}$ by Proposition \ref{prop:min-included-in-max}.

%An $\fm{R}$ path from $[a]$ to $[b]$
%is a sequence $([a_0],\ldots, [a_n])$ for $n<\omega$ such that 
%$[a_0]=[a]$, $[a_n]=[b]$, and 
%$[a_i]\fm{R}[a_j]$ for all $i<n$; if all $[a_i]$, $i\leq n$ are ....

Suppose that $[a] \fm{R}^{[m]} [b]$, $\Di\vf \in \Gamma$, and $M,b\mo \vf$. 
We need to show that 
\begin{equation}\label{eq:LM-max-con}
M,a\mo \Di\vf   .
\end{equation}

By Proposition \ref{prop:m-closure-reduce}, we have
$[a] \fm{R}^{mn+1} [b]$ for some $n$, so
there is
an $\fm{R}$-path
 $[a]=[a_{mn+1}] \fm{R} [a_{mn}] \fm{R} \ldots \fm{R} [a_{0}]=[b]$. 

 \smallskip 
If each $A([a_{i}]_R)$ is $m$-regular for $i\leq mn+1$, 
then $M,a \mo \Di^{\rem{nm+1}} \vf $ by Lemma \ref{lem:filtr-regularPath}; since $\rem{nm+1}=1$, 
\eqref{eq:LM-max-con} follows. 

\smallskip 

 Now assume that $A([a_{i}]_R)$ is $m$-irregular for some $i\leq mn+1$.  Let $c=a_i$,  and consider the following cases.

%$\item Assume $a \equiv b$.  By Proposition \ref{prop:m-closure-irrelf}, $[a]\fm{R}[b]$. Hence 
%$[a]R_{\equiv,\Gamma} [b]$. 

\noindent{\bf Case 1.} Assume $a\approx b$. 
In this case, $[a],[b]$ belong to the same cluster $\Theta$ in $(X{/}{\equiv}, \fm{R})$ by Lemma \ref{lem:same-cluster}.  

\noindent{\bf Case 2a.} 
Assume that for all $d$ with $[d]\in \Theta$ we have $d\approx b$. 
Since $\Theta$ contains $[c]$ and $A([c]_R)$ is $m$-irregular, $\Theta$ is homogeneous. Then $[a] R_{\equiv,\Gamma}  [b]$ by Lemma \ref{lem:filtr-same-cluster}.

\noindent{\bf Case 2b.} 
Now assume that $[d]\in \Theta$ and $d \not \approx b$ for some $d$.  We have $[a] \fm{R}^*[d]$ and  $[d]\fm{R}^*[c]\fm{R}^*[b]$. The latter 
implies that $d$ is $\vf$-saturated in view of Lemma \ref{lem:path-irregular}.
Then $a$ is $\vf$-saturated by Lemma \ref{lem:irreg-filtr-new}. In particular, $M,a\mo\Di \vf$. 

\noindent{\bf Case 2.} Assume $a \not\approx b$. Now \eqref{eq:LM-max-con} 
 follows from Lemma \ref{lem:path-irregular}.

 \smallskip 
   
Hence, $\fm{R}^{[m]}$ is contained in the maximal filtered relation $R_{\equiv,\Gamma}$. 
\end{proof}

\hide{
\begin{lemma}\label{lem:filtr-sequence-of-embeddings}
Assume $k\geq0$, $[a_k] \fm{R} [a_{k-1}] \fm{R} \ldots \fm{R} [a_0]$, and all $a_i$
are $\approx$-equivalent for $i\leq k$. Then there are points $c_0,\ldots, c_k$ in the cluster $[a_0]_R$ such that
$c_k R c_{k-1} R \ldots R c_0$ and $[c_i]=[a_i]$ for all $i\leq k$.
\end{lemma}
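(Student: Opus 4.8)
The plan is to prove the statement by induction on $k$, building the $R$-path inside the single cluster $[a_0]_R$ one edge at a time and transporting each step of the $\fm{R}$-path into that cluster by means of the $\Delta$-morphisms supplied by $\approx$-equivalence together with Lemma \ref{lem:proofLMAF}. To get the induction to run it is convenient to strengthen the statement: I would instead prove that for every $\fm{R}$-path $[a_k]\fm{R}[a_{k-1}]\fm{R}\ldots\fm{R}[a_0]$ with all $a_i$ pairwise $\approx$-equivalent, and for \emph{every} point $c_k\in[a_0]_R$ with $[c_k]=[a_k]$, there exist $c_{k-1},\ldots,c_0\in[a_0]_R$ with $c_kRc_{k-1}R\ldots Rc_0$ and $[c_i]=[a_i]$ for all $i$. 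The lemma then follows by choosing any $c_k$ in the atom $f([a_k]_\at)$ of $A([a_0]_R)$, where $f\colon A([a_k]_R)\to A([a_0]_R)$ is the (unique) $\Delta$-morphism, which exists because $a_k\approx a_0$; this atom is a non-empty subset of $[a_0]_R$, and any point in it is $\equiv$-equivalent to $a_k$.

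For the base case $k=0$ there is nothing to do. For the inductive step I would consider $[a_{k+1}]\fm{R}[a_k]\fm{R}\ldots\fm{R}[a_0]$ together with a point $c_{k+1}\in[a_0]_R$ satisfying $[c_{k+1}]=[a_{k+1}]$. Unfolding the definition of the minimal filtered relation for the first step, pick a witnessing edge $u\,R\,v$ with $u\equiv a_{k+1}$ and $v\equiv a_k$. Since $u\approx a_0$, let $f\colon A([u]_R)\to A([a_0]_R)$ be the unique $\Delta$-morphism; as $u\equiv a_{k+1}$ it sends $[u]_\at$ to the atom of $A([a_0]_R)$ determined by $[a_{k+1}]$, and as $v\equiv a_k$ it sends $[v]_\at$ to the atom determined by $[a_k]$. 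Thus $c_{k+1}\in f([u]_\at)$, and Lemma \ref{lem:proofLMAF} applied with $n=1$ produces a point $c_k\in f([v]_\at)\subseteq[a_0]_R$ with $c_{k+1}\,R\,c_k$ and $[c_k]=[a_k]$. Applying the induction hypothesis to the tail $[a_k]\fm{R}\ldots\fm{R}[a_0]$ with this $c_k$ yields the remaining $c_{k-1},\ldots,c_0$ and closes the induction.

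The step I expect to be the main obstacle is exactly the one glossed over above. Lemma \ref{lem:proofLMAF} transports $R$-paths that lie \emph{within a single cluster} across a $\Delta$-isomorphism, so the transport argument applies only when the witnessing edge $u\,R\,v$ can be chosen with $[u]_R=[v]_R$. In general an $\fm{R}$-step need not admit such an in-cluster witness: the only edges realizing $[a_{k+1}]\fm{R}[a_k]$ might be cross-cluster edges $u\,R\,v$ with $[u]_R\neq[v]_R$, and such an edge has no reason to possess a counterpart inside $[a_0]_R$. I would resolve this by restricting to the case in which every minimal edge occurring along the path stays within a cluster — precisely the hypothesis isolated in the second case of the proof of Lemma \ref{lem:filtr-same-cluster}, which guarantees $[u]_R=[v]_R$ for all witnessing edges — while the cross-cluster situation involving an irregular cluster would instead be dispatched by the saturation argument of Proposition \ref{prop:LmIrregular} and Lemma \ref{lem:irreg-filtr-new} rather than by edge-by-edge transport. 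In fact this obstruction is what suggests that the sequence-of-embeddings formulation is best avoided in favour of the direct induction \eqref{eq:homog} used in Lemma \ref{lem:filtr-same-cluster}.
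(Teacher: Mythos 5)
Your diagnosis of the obstacle is correct, and it is fatal: the lemma as stated is false, not merely resistant to the obvious induction. Take $m=1$, $X=\{x,y\}$, $R=\{(x,y)\}$ (a $\wK4$-frame with two singleton irreflexive clusters), and $\Gamma=\emptyset$ (or $\Gamma=\{p\}$ with $\theta(p)=\emptyset$). Then $A([x]_R)$ and $A([y]_R)$ are two-element algebras with $\Di\{x\}=\Di\{y\}=\emptyset$, the obvious isomorphism satisfies \eqref{eq:approx-def} vacuously and matches the atoms, so $x\approx y$ and in fact $x\equiv y$. With $a_1=x$, $a_0=y$ the hypotheses hold ($[a_1]\fm{R}[a_0]$ is witnessed by $xRy$), but the conclusion would require $c_1\,R\,c_0$ inside $[a_0]_R=\{y\}$, i.e.\ $yRy$, which fails. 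The culprit is exactly the cross-cluster witnessing edge you isolate: Lemma \ref{lem:proofLMAF} transports $R$-paths across a $\Delta$-morphism only when they lie inside a single cluster, and a minimal-filtration edge realized only by $u\,R\,v$ with $[u]_R\neq[v]_R$ leaves no trace inside $[a_0]_R$.

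This is also the paper's own verdict. The lemma survives only as a suppressed draft fragment whose proof breaks off right after producing the $\Delta$-morphism $h:A([a_1]_R)\to A([a_0]_R)$; the argument actually used discards it in favour of Lemma \ref{lem:filtr-same-cluster}, which splits a homogeneous cluster $\Theta$ into the case where some witnessing edge crosses $R$-clusters (dispatched by $\vf$-saturation via Proposition \ref{prop:LmIrregular} and Lemma \ref{lem:irreg-filtr-new}) and the case where every witnessing edge stays inside an $R$-cluster, which is precisely where edge-by-edge transport is sound. Your strengthened induction hypothesis --- quantify over every admissible starting point $c_k$ --- is exactly the statement \eqref{eq:homog} proved there, so your proposed repair coincides with the paper's; the only cosmetic difference is that you peel the path from the front while the abandoned draft peels it from the back.
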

\begin{proof}
Via Lemma \ref{lem:proofLMAF}. \todo{write down the details: sequence of embedding}.

By induction on $k$. For the case $k=0$, put $c_0=a_0$.
Let $k>0$. We have $aRb$ for some $a\in [a_1]$, $b\in [a_0]$ with $aRb$.

By induction hypothesis, there are points
$c_1,\ldots, c_k$ in the cluster $[a_1]_R$ such that
$c_k R c_{k-1} R \ldots R c_1$ and $[c_i]=[a_i]$ for $1\leq i\leq k$.
Since $a\approx b$, there is a $\Delta$-morphism $h:A([a_1]_R)\to A([a_0]_R)$.
\end{proof}

\bigskip
\todo{Remove the obsolete :}

%We have $\fm{R}\cap Id_X \subseteq \fm{R}\subseteq R_{\max}$.

By induction on $n$, we show that
$$\fm{R}^{[m]}_n\subseteq R_{\max}$$
for all  $n<\omega$.
The basis $n=0$ is trivial, since $\fm{R}^{[m]}_0$ is the minimal filtered relation $R_\equiv$.
\improve{Proposition: min is contained in max.}

Suppose that $[a] \fm{R}^{[m]}_{n+1} [b]$. Let $\Di\vf \in \Gamma$, and $M,b\mo \vf$.

Assume $[a]=[b]$. By Proposition \ref{prop:m-closure-irrelf}, $[a]\ff{R}[b]$. Hence $M,a\mo\Di\vf$.

Assume $[a]\neq [b]$.

For some $a_0,\ldots, a_{m+1}, b_0,\ldots,b_m$ we have:
$a_0\equiv a$, $a_{m+1}\equiv b$, and $a_i \fm{R}^{[m]}_{n}  b_i\equiv a_{i+1}$    for $0\leq i\leq m$.

We have
\begin{equation}\label{eq:diamonds-for-b}
\text{ $M,b_i\mo \Di^{m-i}\vf$ for $0\leq i\leq m$.  }
\end{equation}
Indeed, $M,b_m\mo \vf$, and for $i<m$, \eqref{eq:diamonds-for-b} easily follows from the induction hypothesis.

Let $k$ be the smallest $i\leq {m}$ such that $[a_i]_R\neq [b_i]_R$.
\todo{This case should be fine if this set is non-empty; the case when all is mapped in one cluster
should be considered separately and carefully} \todo{Yep! Here is the gap!}
\improve{better}

For $i<k$, fix an isomorphism $f_i$ between $[b_i]$ and $[a_{i+1}]_R$
that maps $[b_i]_\at$ to $[a_i]_\at$; such $f_i$'s exist since $b_i\equiv a_{i+1}$.

\def\a{\mathbf{a}}

For each $i\leq k$, we recursively define a sequence $\sigma_i=\langle \a_0^{(i)},\a_1^{(i-1)},\ldots,\a_i^{(0)} \rangle$
\todo{different letters}
of length $i+1$ such that all elements of the sequence belong to the algebra of the cluster $[a_i]_R$ and are non-empty. We set
$\sigma_0=\langle \a_0^{(0)} \rangle$, where
$\a_0^{(0)}=[a_0]_\at$.  The elements of the $i+1$-th sequence, $0<i<k$, are defined as follows:
$$\langle \a_0^{(i+1)},\a_1^{(i)},\ldots,\a_{i}^{1},\a_{i+1}^{0}  \rangle \;=\; \langle f_i(\a_0^{(i)}),f_i(\a_1^{(i-1)}),\ldots,f_i(\a_i^{(0)}),f([b_{i}]_\at \rangle.$$

Consider $k+1$ elements of $X$: $c_0 \in \a_0^{(k)}, c_1 \in \a_1^{(k-1)},   \ldots, c_k=a_k \in  \a_k^{(0)}$.
\todo{Explain why $a_k\in \a_k^{(0)}$: display an equation}

We have for each $i<k$:
\begin{equation}
\text{ $c_i R^{mn_i+1} c_{i+1}$ for some $n_i$}
\end{equation}
This follows from Proposition \ref{prop:m-closure-reduce}.\todo{No, it does not}

Now consider two cases.

Let $k<m+1$.
Then we have $a_k \Di^{mn+1} b_{k+1} $ for some $n$,
and $[b_k+1]_R\neq [a_k]$.
 and we have  $b_k R^{m-k} d$ for some $d$ such that $M,d\mo \vf$.

We also have $M,c_k\mo \Di^{m+1-k}\vf$ \todo{details: via $a_k$, via $b_k$}.
\end{proof} 
}%hide

It follows that the logics $\Lm$ have the finite model property. 
Since $\Lm$ are canonical logics, 
their tense expansions given by extra axioms \eqref{eq:tenseAxioms} are Kripke complete, and in view of  Theorem \ref{thm:transfer-temporal},  have the finite  model property as well. 

\begin{corollary} 
The logics $\Lm$ and their tense expansions  have the finite model property and are decidable.
\end{corollary}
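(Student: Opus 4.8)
The plan is to assemble the corollary from results already in hand, treating it as a packaging of Theorem \ref{thm:ADF-Lm} together with the transfer result and standard facts about decidability; the substantive work is all in the preceding sections, so what remains is to connect the pieces and to deal with decidability explicitly. For the finite model property of $\Lm$ itself, I would argue as follows. By Theorem \ref{thm:ADF-Lm} the class $\clF$ of $\Lm$-frames admits filtration, and as noted immediately after the definition of admitting filtration, this yields that the logic of $\clF$ has the finite model property. Since each $\Lm$ is canonical (by the Sahlqvist argument recorded earlier) and hence Kripke complete, $\Lm$ is exactly the logic of $\clF$, so $\Lm$ has the finite model property.

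Next I would treat the tense expansion. First establish Kripke completeness: the two tense axioms \eqref{eq:tenseAxioms} are canonical, and $\Lm$ is canonical, so the tense expansion is canonical and therefore Kripke complete. As recorded in the discussion of \eqref{eq:tenseAxioms}, this gives that the tense expansion coincides with $\Logt(\clF)$, where $\clF$ is the class of $\Lm$-frames. Now apply Theorem \ref{thm:transfer-temporal}: since $\clF$ admits filtration, $\Logt(\clF)$ has the finite model property, and hence so does the tense expansion of $\Lm$.

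Finally, decidability. Both logics are finitely axiomatizable: $\Lm = \LK{} + (\Di^{m+1}p \imp \Di p \vee p)$, and the tense expansion adds the two axioms \eqref{eq:tenseAxioms}. A finitely axiomatizable logic with the finite model property is decidable by the standard argument: its theorems are recursively enumerable (enumerate derivations), and its non-theorems are recursively enumerable too (enumerate finite frames and test validity of the candidate formula), so the theorem set is decidable. Applying this to $\Lm$ and to its tense expansion finishes the proof. The only point requiring care — and the step I would flag as the main (indeed the sole) subtlety — is ensuring that the tense expansion is genuinely Kripke complete with respect to the very class $\clF$ that admits filtration, so that Theorem \ref{thm:transfer-temporal} applies verbatim; this is exactly what the canonicity of $\Lm$ together with the canonicity of the tense axioms secures. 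Everything else is routine bookkeeping resting on Theorem \ref{thm:ADF-Lm}.
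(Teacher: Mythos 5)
Your proposal is correct and follows the same route as the paper: the finite model property of $\Lm$ from Theorem \ref{thm:ADF-Lm}, Kripke completeness of the tense expansion via canonicity of $\Lm$ and of the axioms \eqref{eq:tenseAxioms}, Theorem \ref{thm:transfer-temporal} for the tense finite model property, and the standard finite-axiomatizability-plus-fmp argument for decidability. The only difference is that you spell out the Harrop-style decidability argument, which the paper leaves implicit in its remark following Theorem \ref{thm:transfer-temporal}.
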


\section{Selective filtration, maximality property, and subframe pretransitive logics}

While the filtration in the sense of Definition \ref{def:epi}  imply the finite model property for logics in extended languages,
%\cite{KSZ:AiML:2014}
 selective filtration  can be used to obtain the finite model property for extensions of the logic in the same signature.  %KSZ:AiML:2014

The method of selective filtration was proposed in \cite{Gabbay:Sel}.

\begin{definition}\label{def:selective}
Let $M=(X,R,\v)$ and $M_0=(X_0,R_0,\v_0)$ be Kripke models,
$X_0\subseteq X$, $R_0\subseteq R$, and $\v_0(p)=\v(p)\cap X_0$ for
variables.

Let $\Gamma$ be a set of modal formulas.
The model $M_0$ is called a {\em selective filtration of $M$ through $\Gamma$}, if
for every formula $\Di\psi\in\Gamma$, for every $a\in X_0$,
\begin{equation*}\label{eq:selfil}
\text{if } M,a\mo\Di\psi, \text{ then there exists $b$ such that $aR_0 b$ and }
M,b\mo\psi.
\end{equation*}
\end{definition}

The following fact is standard.

\smallskip 

\begin{proposition}[Selective filtration lemma]
Assume that $\Gamma$ is closed under taking subformulas, $M_0$ a selective filtration of $M$ through $\Gamma$.
Then for every $\psi\in\Gamma$, $a$ in $M_0$, we have
$$
M,a\mo\psi \text{ iff } M_0,a\mo\psi.
$$
\end{proposition}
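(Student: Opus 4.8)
The plan is to prove both implications simultaneously by induction on the structure of $\psi\in\Gamma$, exploiting the fact that $\Gamma$ is closed under subformulas: this guarantees that every immediate subformula of $\psi$ is again in $\Gamma$, so the induction hypothesis is available for each of them and for each point of $X_0$. The statement to be established is the biconditional $M,a\mo\psi \Leftrightarrow M_0,a\mo\psi$ for all $a\in X_0$.

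First I would handle the base and Boolean cases, which are routine and do not use the selection condition. For $\psi=\bot$ both sides fail. For $\psi=p$ a variable, the defining requirement $\v_0(p)=\v(p)\cap X_0$ gives, for $a\in X_0$, the chain $M_0,a\mo p \Leftrightarrow a\in\v_0(p) \Leftrightarrow a\in\v(p) \Leftrightarrow M,a\mo p$. For $\psi=\chi\imp\eta$, subformula closure puts $\chi,\eta\in\Gamma$, the induction hypothesis applies to both at $a$, and since the truth value of an implication is determined compositionally from those of $\chi$ and $\eta$, the equivalence transfers.

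The substantive case is $\psi=\Di\chi$, and here the two directions are asymmetric. The easy direction is from $M_0$ to $M$: if $M_0,a\mo\Di\chi$, choose $b\in X_0$ with $aR_0 b$ and $M_0,b\mo\chi$; since $R_0\subseteq R$ we get $aRb$, and since $\chi\in\Gamma$ and $b\in X_0$ the induction hypothesis yields $M,b\mo\chi$, whence $M,a\mo\Di\chi$. The hard part, and the only place where the hypothesis that $M_0$ is a \emph{selective} filtration is actually consumed, is the converse direction. Suppose $M,a\mo\Di\chi$ with $a\in X_0$; since $\Di\chi\in\Gamma$, the defining property of selective filtration (Definition \ref{def:selective}) supplies a point $b$ with $aR_0 b$ and $M,b\mo\chi$. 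Because $R_0$ is a relation on $X_0$, we have $b\in X_0$, and since $\chi\in\Gamma$ the induction hypothesis converts $M,b\mo\chi$ into $M_0,b\mo\chi$; together with $aR_0 b$ this gives $M_0,a\mo\Di\chi$.

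The main obstacle, then, is precisely isolating where the selection condition is indispensable: everything else is bookkeeping guaranteed by $\Sub$-closure of $\Gamma$ and by the inclusions $X_0\subseteq X$, $R_0\subseteq R$ together with the agreement of the valuations on $X_0$. The delicate point is that for the $M\Rightarrow M_0$ modal step we cannot simply reuse a witness from $M$, since that witness need not lie in $X_0$ nor be $R_0$-accessible; it is exactly the hypothesis that $M_0$ was built selectively with respect to $\Gamma$ that furnishes an admissible witness inside $X_0$. I would close the induction by noting that these cases exhaust the syntactic shapes of formulas in the language, completing the proof.
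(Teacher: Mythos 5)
Your proof is correct and is exactly the argument the paper intends: the paper dismisses this as "a straightforward induction on the length of formula," and your structural induction — with the selection condition consumed only in the $M\Rightarrow M_0$ direction of the $\Di$-case and the inclusions $X_0\subseteq X$, $R_0\subseteq R$, $\v_0(p)=\v(p)\cap X_0$ handling everything else — is precisely that straightforward induction, spelled out.
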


\smallskip 

The proof is by a straightforward induction on the length of formula.

\IShLater{Refs. History: Segerberg?}

\subsection{Selective filtration in clusters}

\begin{proposition}\label{prop:sel-cluster-reg}
 Let $F=(X,R)$ be an $m$-regular cluster frame, $V\subseteq X$.
 Then there exists  $U \subseteq V$ such that
 \begin{equation}\label{eq:sel:cluster}
 \text{$|U|\leq m$ and $R^{-1}[V] = R^{-1}[U]$. }
 \end{equation}
\end{proposition}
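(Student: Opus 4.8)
The plan is to read the conclusion as a covering statement. Writing $R^{-1}(v)=\{c\mid cRv\}$, we have $R^{-1}[V]=\bigcup_{v\in V}R^{-1}(v)$, and since $U\se V$ automatically gives $R^{-1}[U]\se R^{-1}[V]$, the task reduces to choosing at most $m$ points of $V$ whose predecessor sets already cover $R^{-1}[V]$. I would look for an \emph{irredundant} cover: a set $U\se V$ with $R^{-1}[U]=R^{-1}[V]$ such that no point can be dropped, i.e.\ $R^{-1}[U\setminus\{u\}]\subsetneq R^{-1}[U]$ for every $u\in U$. Irredundancy is exactly the existence, for each $u\in U$, of a \emph{private predecessor} $c_u$: a point with $c_u R u$ and $\neg(c_u R u')$ for all $u'\in U\setminus\{u\}$.

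The bound $|U|\le m$ would then come from Proposition \ref{prop:sel:chain}. First I note that the inverse frame $(X,R^{-1})$ is again an $m$-regular cluster frame, so Proposition \ref{prop:sel:chain} applied to it yields: among any $m+1$ points of the cluster there are two, say $x$ and $y$, with $R^{-1}(x)\se R^{-1}(y)$ (comparable \emph{predecessor} sets; here I use that in a cluster every finite list of points is an $R^*$-chain, so the hypothesis of the proposition is met). Now suppose $|U|>m$ and pick $m+1$ distinct $u_0,\ldots,u_m\in U$. Taking $x=u_s$, $y=u_t$ with $R^{-1}(u_s)\se R^{-1}(u_t)$ and $s\ne t$, the private predecessor $c_{u_s}$ satisfies $c_{u_s}\in R^{-1}(u_s)\se R^{-1}(u_t)$, hence $c_{u_s}Ru_t$ with $u_t\ne u_s$, contradicting that $c_{u_s}$ sees no element of $U$ other than $u_s$. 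Thus every irredundant cover has at most $m$ elements, and such a cover is the desired $U$.

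The remaining, and main, obstacle is to guarantee that an irredundant cover exists. When $V$ is finite this is immediate: start from $U=V$ and delete redundant points one at a time. In the general (possibly infinite) case the naive deletion need not terminate, and a minimal cover need not exist, since the intersection of a descending chain of covers may fail to cover (a single point $c$ might see a different witness in each member of the chain, with no common witness in the intersection). I expect the cleanest remedy to be exploiting the periodic structure forced by $m$-regularity: a strongly connected frame with $R^{m+1}\se R$ is graded by $\mathbb{Z}/g\mathbb{Z}$ for some $g\mid m$, with $R$ sending each layer to the next; then all predecessors of a point lie in one layer, and on a single layer the relation $R^{g}$ is $(m/g)$-regular. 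Splitting $V$ across the $\le m$ layers and treating each layer separately should reduce the covering to a controlled problem — in the extreme case $g=m$, for $v,v'$ in one layer one has $vR^{m}v'$, whence $cRv$ forces $cR^{m+1}v'$ and so $cRv'$ by Proposition \ref{prop:LMregularC}, meaning all points of a layer share the same predecessor set and a single representative per layer suffices. Making this reduction precise, in particular reconciling $R$-predecessor coverage with the layer relation $R^{g}$, is the step I would expect to require the most care.
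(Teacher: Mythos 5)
Your proposal is not a complete proof: you correctly diagnose that the existence of an irredundant cover is the crux in the infinite case, but you then only sketch a remedy and carry it out in the special case $g=m$, leaving both the claim $g\mid m$ and the general reduction "to require the most care." As it stands, the argument establishes only that \emph{if} an irredundant cover exists then it has at most $m$ elements (that part is fine: $(X,R^{-1})$ is again an $m$-regular cluster frame, so Proposition~\ref{prop:sel:chain} applies to it, and the private-predecessor contradiction works); the covering set itself is never produced. The paper avoids the existence issue entirely by a greedy construction: repeatedly pick a point $a_n$ of $R^{-1}[V]$ not yet covered and a witness $u_n\in R(a_n)\cap V$, for at most $m$ rounds. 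By construction $R(a_i)\not\subseteq R(a_j)$ for $i<j$ among the selected points, so Proposition~\ref{prop:sel:chain} (applied in the \emph{original} frame to these points together with an arbitrary leftover point $a$) forces $R(a_i)\subseteq R(a)$ for some $i$, whence $u_i\in R(a)$ and $a$ is already covered. This needs no minimality, no compactness, and no structure theory.

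That said, your periodic-structure idea does complete, and not only when $g=m$; you just did not close it. One can show $g\mid m$ for any $m$-regular cluster frame with an edge (a closed walk of length $k\ell\ge m+1$ contracts, via $R^{m+1}\subseteq R$, to one of length $k\ell-m$, so $g\mid m$), and then for any two points $v,v'$ in the same residue layer there is a walk from $v$ to $v'$ whose length is a sufficiently large multiple of $m$ (such lengths are $\equiv 0 \bmod g$ and all large enough lengths in that residue class are realized). Hence $cRv$ implies $cR^{km+1}v'$ and so $cRv'$ by Proposition~\ref{prop:LMregularC}: \emph{all} points of a layer share the same predecessor set, and one representative of $V$ per layer gives $U$ with $|U|\le g\le m$ directly, making the irredundant-cover detour unnecessary. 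Had you written this out, you would have a correct and arguably more structural proof than the paper's; but the submitted text stops short of it, so the key step --- actually exhibiting a set $U$ satisfying \eqref{eq:sel:cluster} --- is missing.
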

\begin{proof}
\hide{
Let $Y=R^{-1}[V]$.
Recursively, we define $U_n=\{u_i\}_{i<n}\subseteq V$
and $Y_n=\{a_i\}_{i<n}\subseteq Y$. We put $U_0=Y_0=\emp$.
Let $n>0$. Assume that $a\in Y{\setminus} R^{-1}[U_{n-1}]$;
then there is $u\in R(a)\cap V$; put $u_{n-1}=u$, $a_{n-1}=a$.
Remark that in this case $a_{n-1}\notin Y_{n-1}$ and so $|Y_n|=n$.
If $Y{\setminus} R^{-1}[U_{n-1}]$ is empty, put $U_n=U_{n-1}$, $Y_n=Y_{n-1}$.
}

Let $Y=R^{-1}[V]$.
Recursively, we define $U_n\subseteq V$
and $Y_n\subseteq Y$.
For $n=0$ we put $U_0=Y_0=\emp$.

Let $n>0$. 

If $Y{\setminus} R^{-1}[U_{n-1}] = \emp$ then we put $U_n=U_{n-1}$, $Y_n=Y_{n-1}$. 

If $Y{\setminus} R^{-1}[U_{n-1}] \ne \emp$, then there exist $a_{n}\in Y{\setminus} R^{-1}[U_{n-1}]$ and $u_{n}\in R(a_{n})\cap V$. We put $U_n = U_{n-1} \cup \set{u_{n}}$ and $Y_n = Y_{n-1} \cup \set{a_{n}}$. Notice that in this case  $|Y_n|=n$.

\smallskip

%\IS{OK, but what is the difference? I only see that some details are more vague (you are not saying that $a_i$'s and $u_i$'s are also defined  recursively, but how does it help?)}

Put $U=U_m$. 
If $|Y_m|<m$, then 
$Y{\setminus} R^{-1}[U]$ is empty due to the construction, and so \eqref{eq:sel:cluster} holds.

Assume that $|Y_m|=m$. 
%In this case we can show that $Y=X$.
We show that in this case $R^{-1}[U]$ contains all points in the cluster.
%,it the doi=X$, and so contains $Y$.
Clearly, $Y_m\subseteq R^{-1}[U]$. 
Assume that  $a\in X{\setminus}Y_m$. 
%Let $a_0=y_{m-1}$, $a_1=y_{m-2}$,
%$\ldots$, $a_{m-1}=y_0$.
By the construction, if $1\leq i<j\le m$, then $R(a_j) \not\subseteq R(a_i)$: indeed, 
 $u_{j}\in R(a_j)$, but 
$a_j \notin R^{-1}[U_{j-1}]$, and $a_i \in U_{j-1}$.
%\IS{But the point is that there is no $a\in Y{\setminus}Y_m$ (ok, no, this is not true). Do we reason by contradiction?} 
By Proposition \ref{prop:sel:chain},  $R(a_i)\subseteq R(a)$ for some $i<m$. 
%Let $u=u_{m-i-1}$. 
We have 
$u_i\in R(a_i)$,  and so  $u_i\in R(a)$. Hence, $a\in R^{-1}[U]$.

\hide{
For $1\leq i<m$, let $l_i$ be the least $l$ such that $y_{i+1} R^l y_{i}$; we have $l_i\leq m$ by $m$-transitivity of $F$.
Let also $l_m$ be the least $l$ such that $y R^l y_{m-1}$.

For $1\leq a\leq b\leq m$, set $k_{ab}=\sum_{a\leq i\leq b} l_i \mod m$,
and consider the set $S=\{k_{am}\}_{1\leq a\leq m}$.
We claim that $k_{am}=0$ for some $a$. Assume not. In this case $|S|\leq m-1$, and hence
$k_{am} = k_{bm}$ for all $a<b\leq m$.
Then $\sum_{a\leq i\leq {b-1}}l_i$ is a multiple of $m$, and so
$y_{b} R^{mn} y_a R u_a$ for some $n$, and so $y_{b} R u_a$. This contradicts the construction.
\ISh{(later) or the assumption; improve the logic}
Hence $k_{am}=0$ for some $a$. Then
$y R^{mn} y_a R u_a$, and so $y R u_a$. This proofs \eqref{eq:sel:cluster}. 
}
\end{proof}

\begin{proposition}\label{prop:sel:cluster}
 Let $F=(X,R)$ be an  $\Lm$-cluster frame, $V\subseteq X$.
 Then there exists $U \subseteq V$ such that $|U|\leq m+1$ and $R^{-1}[V] = R^{-1}[U]$.
\end{proposition}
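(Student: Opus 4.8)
The plan is to split on whether the cluster frame $F$ is $m$-regular or $m$-irregular, treating the regular case by direct appeal to Proposition \ref{prop:sel-cluster-reg} and the irregular case via the structural dichotomy of Proposition \ref{prop:irregAiML}. Throughout, note that $U \subseteq V$ always gives $R^{-1}[U] \subseteq R^{-1}[V]$, so in every case it suffices to produce $U$ with $|U| \leq m+1$ that satisfies the reverse inclusion $R^{-1}[V] \subseteq R^{-1}[U]$.

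If $F$ is $m$-regular, I would simply invoke Proposition \ref{prop:sel-cluster-reg}, which already yields $U \subseteq V$ with $|U| \leq m \leq m+1$ and $R^{-1}[V] = R^{-1}[U]$. Nothing further is needed here; the extra slack of one element in the bound is consumed only by the irregular case.

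Now suppose $F$ is $m$-irregular. By Proposition \ref{prop:irregAiML} there are points $a_0,\ldots,a_m$ with $a_0 R a_1 R \ldots R a_m R a_0$, and moreover either $X = \{a_0,\ldots,a_m\}$ or $R \cup Id_X = X\times X$. In the first subcase $|X| \leq m+1$, so I take $U = V$; then $|U| = |V| \leq |X| \leq m+1$ and the required equality is immediate. In the second subcase every pair of distinct points is related, and I argue by the size of $V$: if $|V| \leq 1$ I take $U = V$; if $V$ contains two distinct points $v_1, v_2$, I take $U = \{v_1,v_2\}$, noting $|U| = 2 \leq m+1$ since $m \geq 1$. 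For any $a \in X$, either $a \neq v_1$, whence $a R v_1$, or $a = v_1$, whence $a R v_2$ because $v_2 \neq v_1$; thus $R^{-1}[U] = X \supseteq R^{-1}[V]$, and equality follows.

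The step I expect to need the most care is the ``almost universal'' subcase $R \cup Id_X = X\times X$: here reflexivity of individual points is not guaranteed, so a single witness in $U$ need not cover itself. The remedy is precisely to retain two distinct points of $V$, so that each point of $X$ is covered through the other one. This is the only spot where the reflexivity subtlety of weak (as opposed to ordinary) transitivity intervenes, and it is what demands a moment's attention rather than any deep argument; it is also the reason the bound $m+1$ cannot in general be improved to $m$, the $X = \{a_0,\ldots,a_m\}$ subcase already witnessing the necessity of $m+1$.
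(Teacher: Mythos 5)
Your proposal is correct and follows essentially the same route as the paper: the $m$-regular case is delegated to Proposition \ref{prop:sel-cluster-reg}, the almost-universal case $R\cup Id_X=X\times X$ is handled with a two-element $U$ (the paper simply calls this ``trivial''; your two-witness argument is the intended justification), and the remaining case of a cycle of size at most $m+1$ takes $U=V$. The only cosmetic difference is the order in which the subcases of Proposition \ref{prop:irregAiML} are examined.
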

\begin{proof}
If $F$ is regular, it follows from Proposition \ref{prop:sel-cluster-reg}.
If $R\cup Id_X = X\times X$, it is trivial that such $U$ with $|U|\leq 2$ exists.
By Proposition \ref{prop:irregAiML},  in the only remaining case, $F$ is a cycle of size $\leq m+1$, and we put $U=V$.
\end{proof}

%AK typo
Let $\lngth{\vf}$ denote the number of subformulas of $\vf$.

\smallskip 
\begin{theorem}
Let $m>0$. Then a formula is satisfiable in an $\Lm$-cluster iff 
it is satisfiable in an $\Lm$-cluster of size at most $2\lngth{\vf}m$. %AK typo
\end{theorem}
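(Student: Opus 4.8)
The plan is to satisfy $\vf$ inside a big $\Lm$-cluster and then carve out a small sub-\emph{cluster} by selective filtration (Definition \ref{def:selective}), using the selection Propositions \ref{prop:sel:cluster} and \ref{prop:sel-cluster-reg} to keep the number of retained points under control. Suppose $M=(F,\theta)$ with $F=(X,R)$ an $\Lm$-cluster and $M,a\mo\vf$; put $\Gamma=\Sub(\vf)$, and for each subformula $\Di\psi\in\Gamma$ let $V_\psi=\{x\mid M,x\mo\psi\}$. By Proposition \ref{prop:sel:cluster} there is $U_\psi\subseteq V_\psi$ with $|U_\psi|\le m+1$ and $R^{-1}[V_\psi]=R^{-1}[U_\psi]$; the force of this equality is that every point satisfying $\Di\psi$ already has an $R$-successor inside the small set $U_\psi$. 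Setting $X_0=\{a\}\cup\bigcup\{U_\psi\mid \Di\psi\in\Gamma\}$ and $R_0=R\restr X_0$, these equalities show at once that $(X_0,R_0,\theta\restr X_0)$ is a selective filtration of $M$ through $\Gamma$, so by the selective filtration lemma $\vf$ is still satisfied at $a$. Since a formula has at least one atomic subformula, at most $\lng{\vf}-1$ subformulas have the form $\Di\psi$, whence $|X_0|\le 1+(m+1)(\lng{\vf}-1)\le 2m\lng{\vf}$.

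The genuine obstacle is that $(X_0,R_0)$, while an $\Lm$-frame by Proposition \ref{eq:Lm-trans-and-subfr}, need not be a \emph{cluster}: a restriction of a cluster may fail to be strongly connected, and one cannot freely add edges to repair this without disturbing the $\Box$-subformulas of $\vf$. I would therefore split into cases according to Proposition \ref{prop:irregAiML}. If $F$ is $m$-irregular, then either $F$ already has at most $m+1\le 2m\lng{\vf}$ points, so $F$ itself is the required cluster, or $R\cup Id_X=X\times X$; in the latter case any two distinct points of any subset are $R$-related, so $R_0\cup Id_{X_0}=X_0\times X_0$ and $(X_0,R_0)$ is automatically a cluster, and we are done.

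The remaining and hardest case is $m$-regular $F$. Here I would run the forward selection through Proposition \ref{prop:sel-cluster-reg} instead, whose witness sets have size only $\le m$, spending at most $m\lng{\vf}$ points and reserving the other half of the budget to restore strong connectivity. The key tool is a consequence of Proposition \ref{prop:sel-cluster-reg} applied to $V=X$: since every point of a nontrivial cluster has a successor, $R^{-1}[X]=X$, so there is $U$ with $|U|\le m$ and $R^{-1}[U]=X$, i.e.\ every point reaches $U$ in one step; dually, applying the proposition to the $\Lm$-cluster $(X,R^{-1})$ yields $U'$ with $|U'|\le m$ that reaches every point in one step. I would then enlarge $X_0$ by a strongly connected ``core'' $B\ni a$ assembled from $U\cup U'$, so that every retained witness both reaches $B$ and is reached from $B$ in one step; since $B$ is strongly connected, $X_0\cup B$ collapses into a single cluster. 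The step I expect to be delicate is exactly this assembly: linking $U$, $U'$, and $a$ into a strongly connected set of size $O(m)$ (controlling the intermediate points via Propositions \ref{prop:sel:chain} and \ref{prop:sel-cluster-reg}) so that the total stays within $2m\lng{\vf}$. Once connectivity is secured, the selective filtration lemma finishes the nontrivial implication; the converse is immediate, since any $\Lm$-cluster of size at most $2m\lng{\vf}$ is in particular an $\Lm$-cluster.
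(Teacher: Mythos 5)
Your first paragraph is, up to a cosmetic difference, exactly the paper's proof: for each relevant subformula $\psi$ choose $U_\psi\subseteq\valext(\psi)$ by Proposition~\ref{prop:sel:cluster}, observe that the restriction to $\{a\}\cup\bigcup U_\psi$ is a selective filtration through $\Sub(\vf)$, and count (the paper runs the selection over all subformulas rather than only those under a $\Di$, but the bound is the same). That part is correct, and your inclusion of the designated satisfying point $a$ is actually a small improvement in precision over the paper's wording.

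Where you diverge is in insisting that the restriction be a \emph{cluster}, and this worry is legitimate: the paper's proof simply declares $M_0$ to be ``the required countermodel'' without addressing it, and the restriction genuinely need not be a cluster. For instance, with $m=3$, $X=C_0\cup C_1\cup C_2$, $R=C_0\times C_1\cup C_1\times C_2\cup C_2\times C_0$, $C_0=\{x_1,x_2\}$, $C_1=\{y\}$, $C_2=\{z\}$, $\valext(p)=\{x_1\}$, $\valext(q)=\{x_2\}$, the formula $\Di p\wedge\Di q$ is satisfied at $z$ and the selection can return $X_0=\{z,x_1,x_2\}$, whose restriction has only the edges $(z,x_1),(z,x_2)$ and is not strongly connected. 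Your irregular case is handled correctly. But your regular case is not yet a proof: the facts $R^{-1}[U]=X$ and $R[U']=X$ do not by themselves make $U\cup U'$ strongly connected under the restricted relation, and you concede the assembly is left open. The clean way to close it is structural: an $m$-regular $\Lm$-cluster containing an edge is a strongly connected digraph of some period $g$, its period partition $C_0,\dots,C_{g-1}$ satisfies $g\mid m$ (since $R^{m+1}\subseteq R\neq\emptyset$), and because $R^{km+1}\subseteq R$ for all $k$ while $R^{km+1}$ eventually contains any given pair $(x,y)$ with $y$ one period-class ahead of $x$, one gets $R=\bigcup_i C_i\times C_{i+1\bmod g}$ exactly. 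Adding to $X_0$ one point from each class costs $g\le m$ extra points, makes the restriction strongly connected, and still fits your budget: $1+(m+1)(\lng{\vf}-1)+m=(m+1)\lng{\vf}\le 2m\lng{\vf}$. So the half of your argument that overlaps the paper is right, and the half you added targets a real omission in the paper's own proof, but as written it still contains a gap in the regular case.
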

\begin{proof}
Readily follows from Proposition \ref{prop:sel:cluster}: for a model $M=(F,\val)$ on an $\Lm$-cluster and a formula $\vf$,  for each its subformula $\psi$, consider the set $V=\valext(\psi)$
and the corresponding set $U$ satisfying \eqref{eq:sel:cluster}; put $U=U_\psi$. 
Then the restriction $M_0$ of $M$ to the union of these sets is a selective filtration:
indeed, if $M,a\mo \Di \psi$ 
for $a$ in $M_0$ and a subformula $\Di\psi$ of $\vf$, then $aRb$ for some $b\in U_\psi$. 
So $M_0$ is the  required countermodel.
\end{proof}

\begin{corollary}
For $m>0$, the satisfiability problem on $\Lm$-clusters is in $\mathrm{NP}$.
\end{corollary}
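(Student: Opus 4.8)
The plan is to read this off the preceding theorem as a straightforward small-model argument: the theorem gives a witnessing $\Lm$-cluster of size at most $2\lngth{\vf}m$, and since $m$ is a fixed parameter while $\lngth{\vf}$ is bounded by the length of $\vf$, this bound is polynomial in $|\vf|$. So a satisfiable formula has a polynomial-size certificate, and membership in $\mathrm{NP}$ reduces to checking that such a certificate can be guessed and verified in nondeterministic polynomial time.

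First I would describe the guessing phase. Given an input formula $\vf$, the algorithm nondeterministically guesses a finite frame $(X,R)$ with $|X|\leq 2\lngth{\vf}m$, a valuation $\val$ of the variables occurring in $\vf$ on $X$, and a distinguished point $a\in X$. Since $|X|$ is polynomial in $|\vf|$, the relation $R$, the restricted valuation, and the point $a$ are all encoded by polynomially many bits, so this guess is an NP certificate of the correct size.

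Next comes the deterministic verification phase, where I would check three conditions in polynomial time. First, that $(X,R)$ is a cluster frame, that is $R^*=X\times X$; this is a reachability test computable in polynomial time. Second, that $(X,R)$ validates $\Lm$, which by \eqref{eq:m-property} amounts to $R^{m+1}\subseteq R\cup Id_X$; as $m$ is fixed, $R^{m+1}$ is obtained from $R$ by a constant number of Boolean matrix products, so this test is polynomial. Third, that $(X,R),\val,a\mo\vf$, which is ordinary model checking of a modal formula over a finite Kripke model and runs in time polynomial in $|X|\cdot\lngth{\vf}$ by evaluating subformulas bottom-up. The algorithm accepts iff all three checks succeed.

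By the theorem, $\vf$ is satisfiable in some $\Lm$-cluster iff a certificate of this bounded size exists, so the algorithm accepts exactly the satisfiable formulas; hence the problem lies in $\mathrm{NP}$. I do not expect a genuine obstacle here, as the corollary is essentially a restatement of the theorem's polynomial bound. The only point deserving care is that \emph{every} verification step runs in polynomial time, and this rests on $m$ being a constant (so that computing $R^{m+1}$ is cheap) together with the standard fact that modal model checking over finite models is polynomial.
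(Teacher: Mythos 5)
Your proposal is correct and is exactly the argument the paper intends: the corollary is stated as an immediate consequence of the preceding small-model theorem, and you have simply filled in the standard guess-and-check details (polynomial-size certificate from the $2\lngth{\vf}m$ bound, plus polynomial-time verification of the cluster condition, the $\Lm$-validity condition \eqref{eq:m-property}, and model checking). Nothing is missing.
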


\subsection{Maximality lemma and subframe pretransitive logics}

Consider a frame $(X,R)$ and its subset $V$.
We say that $a\in V$ is a {\em maximal element of $V$},
if for all $b\in V$,  $aR^* b$ implies $b R^* a$.

An important property of canonical transitive frames is that every non-empty definable subset has a maximal element \cite{Fine85}. This property transfers for the pretransitive case as well.  The following proposition generalizes \cite[Proposition 6]{gliv}.
%\ISh{Later: change the wording, it is to close to \cite{gliv}}

\smallskip 

\begin{proposition}[Maximality lemma]\label{prop:max-general}
Suppose that $\frF=(X,R)$ is the canonical frame of a pretransitive $\vL$,
$\Psi$ is a set of formulas. If $\{a\in X\mid \Psi\subseteq a\}$ is non-empty (that is, $\Psi$ is $L$-consistent),
then it has a maximal element.
\end{proposition}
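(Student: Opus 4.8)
The plan is to obtain the maximal element by a Zorn's lemma argument on the preorder $R^*$ restricted to $V=\{a\in X\mid \Psi\se a\}$, which is non-empty since $\Psi$ is $L$-consistent. As $L$ is pretransitive, fix $m$ with $R^*=\bigcup_{i\le m}R^i$ on every $L$-frame, and introduce the master modality $\Box^*\vf:=\bigwedge_{i\le m}\Box^i\vf$. On the canonical frame $\frF$, truth coincides with membership, so $\Box^*\vf\in a$ iff $\vf\in b$ for every $b$ with $aR^*b$. To each point $c$ I associate its $\Box^*$-theory $\Lambda(c)=\{\psi\mid \Box^*\psi\in c\}$.

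First I would record three facts about $\Lambda$, all stemming from the fact that $\Box^*$ satisfies the $\LS{4}$ axioms in any pretransitive $L$. From reflexivity $\Box^*\psi\imp\psi$ we get $\Lambda(c)\se c$. From transitivity $\Box^*\psi\imp\Box^*\Box^*\psi$ we get that $\Lambda$ is monotone along $R^*$: if $cR^*d$, then $\Lambda(c)\se\Lambda(d)$. Finally, the canonical characterization $cR^*d$ iff $\Lambda(c)\se d$. The direction $(\Rightarrow)$ is immediate from the semantics of $\Box^*$; the converse — that $\Lambda(c)\se d$ forces $d$ to be reachable from $c$ in at most $m$ steps — is precisely the place where pretransitivity is essential, and I expect this to be the main obstacle (one proves that the canonical relation of $\Box^*$ equals $R^*$, the content behind ``expressible transitive reflexive closure modality'').

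With these in hand the Zorn argument runs smoothly. Let $\mathcal{C}$ be a chain in $(V,R^*)$; I would produce an upper bound lying inside $V$. Consider $T=\Psi\cup\bigcup_{c\in\mathcal{C}}\Lambda(c)$. Any finite $T_0\se T$ mentions only finitely many members of $\mathcal{C}$; since $\mathcal{C}$ is a chain there is a top one $c_k$ among them, and by monotonicity $\Lambda(c_i)\se\Lambda(c_k)$ for each, so $T_0\se\Psi\cup\Lambda(c_k)$. As $c_k\in V$ we have $\Psi\se c_k$, and $\Lambda(c_k)\se c_k$ by reflexivity; hence $T_0\se c_k$ is consistent. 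By compactness $T$ is $L$-consistent, so by Lindenbaum it extends to a maximal consistent set $b$. Then $\Psi\se b$, whence $b\in V$, and $\Lambda(c)\se b$ for every $c\in\mathcal{C}$, so $cR^*b$ by the characterization; thus $b$ is an upper bound of $\mathcal{C}$ in $V$.

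Applying Zorn's lemma to $V$ ordered by $R^*$ up to the equivalence of lying in the same cluster (i.e. $aR^*b\wedge bR^*a$) yields a point $a\in V$ whose cluster is maximal: if $b\in V$ and $aR^*b$, maximality forces $bR^*a$. This is exactly the required maximal element of $V$. Besides the converse half of the canonical characterization of $R^*$, the only other point needing care is that the upper bound stays in $V$; this is why $\Psi$ is inserted into $T$ explicitly and why reflexivity of $\Box^*$ is used to keep $T$ consistent.
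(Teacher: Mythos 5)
Your proof is correct, and it follows the same overall skeleton as the paper's: pass to the quotient by clusters, apply Zorn's lemma, and use a compactness argument to produce an upper bound for a chain. The difference lies entirely in how that upper bound is manufactured. The paper stays topological: it observes that $R^*(a)=\bigcup_{i\leq m}R^i(a)$ is a finite union of sets each defined by $\{\vf\mid\Box^i\vf\in a\}$, hence closed in the Stone topology, so the family $\{R^*(a)\cap V\mid a\in\Sigma_0\}$ consists of closed sets with the finite intersection property and has non-empty intersection by compactness of the Stone space. You instead work syntactically with the master modality $\Box^*$, take $T=\Psi\cup\bigcup_{c\in\mathcal{C}}\Lambda(c)$, and invoke finite consistency plus Lindenbaum. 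These are two faces of the same compactness, but your route requires the extra lemma you flag as ``the main obstacle'': that $\Lambda(c)\se d$ implies $cR^*d$. That lemma is true and standard, but you should supply the argument rather than only anticipate it; it is the usual disjunction trick: if $\Psi_i=\{\vf\mid\Box^i\vf\in c\}\not\se d$ for every $i\leq m$, pick $\vf_i\in\Psi_i$ with $\neg\vf_i\in d$ and set $\vf=\bigvee_{i\leq m}\vf_i$; then $\Box^i\vf\in c$ for all $i\leq m$ by monotonicity, so $\vf\in\Lambda(c)\se d$, contradicting $\bigwedge_{i\leq m}\neg\vf_i\in d$. The paper's formulation is arranged precisely so as to sidestep this lemma (it only needs each $R^i(a)$, not their union, to be definable by a single set of formulas), which is what it buys over yours; what yours buys is that the same $\Box^*$-theory bookkeeping also yields the monotonicity $\Lambda(c)\se\Lambda(d)$ along $R^*$ and makes the upper bound an explicit maximal consistent set rather than an unspecified point of a non-empty intersection.
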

\begin{proof}
Consider the skeleton $\ff{F}=(\ff{X},\leq_R)$ of $F$.
Let $V=\{a\in X\mid \Psi\subseteq a\}$,  
$\ff{V}=\{C\in\ff{X}\mid C\cap V\neq \emp\}.$

Let $\Sigma$ be a chain in $\ff{V}$, $\Sigma_0=(\bigcup\Sigma)\cap V$.
Consider the family $$\clU=\{R^*(a)\cap V\mid a\in \Sigma_0\}.$$
It is straightforward that this family has the finite intersection property: we have 
$\bigcap \clU_0\neq \emp$ 
for every finite $\clU_0\subseteq \clU$. 

%For a formula $\psi$, let $s(\psi)=\{a\in X\mid \psi \in a\}$
In a pretransitive canonical frame,
$R^*(a)=\bigcup_{i\leq m} R^i(a)$ for some $m<\omega$. Each $R^i(a)$ is defined
by the set $\Psi_i=\{ \vf \mid \Box^i \vf\in a\}$: $a R^i b$ iff $\Psi_i\subseteq b$,
see, e.g., \cite[Proposition 5.9]{CZ}.
%R^i(a)\bigcap\{s(\psi)\mid \Box^*\psi\in a\}$
%\bigcap\{s(\psi)\mid \Box^*\psi\in a\}$
%for $a\in X$ we have $R^*(a)=\bigcap\{s(\psi)\mid \Box^*\psi\in a\}$
%\ISh{define in prel: $\Box^*$}; see, e.g., \cite[Proposition 5]{gliv} for details. \ISh{What is the best ref? Do we have it in our paper? If CZ gives it?}
It follows that each $U\in \clU$ is closed in the Stone topology $\tau$ on $F$ (recall that $\tau$ is given by the base 
$\{ \valext(\vf) \mid \vf \text{ is a formula}\}$).

It is well-known that $\tau$ is compact; see, e.g., \cite[Theorem 1.9.4]{gold:math-mod93}.  Hence
$\bigcap \clU$ is non-empty. Consider $a\in \bigcap \clU$. Then the cluster $[a]_R$ is an upper bound of the chain $\Sigma$.

By Zorn lemma, $\ff{V}$  has a maximal element $C$ in the poset $\ff{F}$.
By the definition of $\ff{V}$, $V\cap C$ is non-empty. Consider $b\in V\cap C$. Due to the construction, $b$ is a maximal element of $V$.
\end{proof}

\begin{corollary}\label{cor:max}
Suppose that $\frF=(X,R)$ is the canonical frame of a pretransitive logic.
Let $a\in X$, $\vf\in b$ for some $b$ in $R(a)$. Then $R(a)\cap\{b\mid \vf\in b\}$
has a maximal element.
\end{corollary}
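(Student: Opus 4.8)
The plan is to obtain this as a direct consequence of the Maximality lemma (Proposition \ref{prop:max-general}), by exhibiting the set $R(a)\cap\{b\mid \vf\in b\}$ as a set of the form $\{b\in X\mid \Psi\subseteq b\}$ for a suitable $\Psi$.

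First I would recall the standard description of one-step successors in a canonical frame. Putting $\Psi_1=\{\psi\mid \Box\psi\in a\}$, we have $aRb$ iff $\Psi_1\subseteq b$; this is exactly the case $i=1$ of the definability of $R^i(a)$ already invoked in the proof of Proposition \ref{prop:max-general} (cf. \cite[Proposition 5.9]{CZ}), and it holds in any canonical frame, independently of pretransitivity. Hence $R(a)=\{b\mid \Psi_1\subseteq b\}$.

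Next, set $\Psi=\Psi_1\cup\{\vf\}$. Since $b\in R(a)$ iff $\Psi_1\subseteq b$, and $\vf\in b$ iff $\{\vf\}\subseteq b$, I would observe that
$$
R(a)\cap\{b\mid \vf\in b\}=\{b\in X\mid \Psi\subseteq b\}.
$$
By the hypothesis of the corollary there exists $b\in R(a)$ with $\vf\in b$, so $\Psi\subseteq b$ for this $b$; that is, $\{b\in X\mid \Psi\subseteq b\}$ is non-empty (equivalently, $\Psi$ is $L$-consistent). The frame $\frF$ is the canonical frame of a pretransitive logic, so Proposition \ref{prop:max-general} applies to $\Psi$ and yields a maximal element of $\{b\mid \Psi\subseteq b\}$, which is precisely the desired maximal element of $R(a)\cap\{b\mid \vf\in b\}$.

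There is essentially no obstacle here: all the substantive work—the finite intersection property, the definability of each $R^i(a)$, and the compactness of the Stone topology used to produce upper bounds of chains—is already carried out in Proposition \ref{prop:max-general}. The only points requiring care are the bookkeeping identity $R(a)\cap\{b\mid \vf\in b\}=\{b\mid \Psi\subseteq b\}$ and the remark that the notion of maximal element used in the corollary is the same $R^*$-based notion as in the proposition, so that the maximal element delivered by the lemma is a maximal element in the required sense.
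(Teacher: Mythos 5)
Your proposal is correct and is exactly the paper's own proof: the paper takes $\Psi=\{\vf\}\cup\{\psi\mid \Box\psi\in a\}$ and applies Proposition \ref{prop:max-general}, which is precisely your argument with the bookkeeping identity $R(a)\cap\{b\mid\vf\in b\}=\{b\mid\Psi\subseteq b\}$ spelled out. Nothing to add.
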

\begin{proof}
Let $\Psi=\{\vf\}\cup\{\psi\mid \Box\psi\in a\}$ and apply 
Proposition \ref{prop:max-general}.
\end{proof}

Maximality is a very useful tool. Similarly to the transitive case, it gives a simple argument for the finite model property of $\wK4$. Namely, let $\vf$ be a $\wK4$-satisfiable formula. 
%AK typo
Consider the set $\Gamma$ of its subformulas. 
For each $\psi\in \Gamma$, take the set $\clV_\psi$ of clusters
in the canonical model $M$ of $\wK4$ where $\psi$ is satisfied in at least one point.
Let $\clU_\psi$ be the set of all maximal clusters in $\clV_\psi$.
The restriction of $M$ to $\bigcup\{\clV_\psi\mid \psi\in \Gamma\}$
will be 
a selective filtration of $M$ through $\Gamma$ and it will have the height
not exceeding the length of $\vf$. This is the crucial step. Clusters can be made
finite according
to Proposition \ref{prop:sel-cluster-reg}; to make the branching in the skeleton finite
is a routine procedure.\footnote{Besides the finite model property, this argument also gives a simple way to prove that $\wK4$ is decidable in $\PSpace$. This was announced in \cite[Section 7]{ShapON_PSPACE05}; proof of the complexity result is given in
\cite{CondSatOnline}. }
Below we give a detailed and more general (and more technical) argument: selective filtration in canonical models of the logics $\Lm$, $m>0$, and their subframe extensions.

\IShLater{Subframe - definition: double check}
%We say that a logic $L$ is {\em subframe}, if $L$ is the logic of a class of frames  which is closed under taking substructures.  

\IShLater{More words on results of Fine}

%The logics $\Lm$ are examples of pretransitive subframe logics.
%All their subframe extensions have the finite model property according to the following theorem.

\smallskip

\begin{theorem}\label{thm:selective}
Let $m>0$, $L$  a  subframe canonical logic which extends $\Lm$. Then $L$ has the finite model property.
\end{theorem}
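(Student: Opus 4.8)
The plan is to derive the finite model property from a \emph{selective} filtration of the canonical model, using the subframe property to guarantee that the resulting finite frame still validates $L$. Since $L$ is canonical, $L=\Log(F)$ for its canonical frame $F=(X,R)$, and a formula is $L$-consistent iff it is satisfied at some point of the canonical model $M=(F,\v)$; moreover, as $L\supseteq\Lm$, the frame $F$ is $m$-transitive by Proposition \ref{eq:Lm-trans-and-subfr}, so the Maximality lemma (Proposition \ref{prop:max-general}) and Corollary \ref{cor:max} apply to $F$. Fix $\vf\notin L$; then $\neg\vf$ is satisfied at some $a_0$ in $M$, and we put $\Gamma=\Sub(\neg\vf)$. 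It suffices to build a \emph{finite} selective filtration $M_0$ of $M$ through $\Gamma$ whose domain $X_0$ is a subset of $X$, with $M_0=M\restr X_0$: by the Selective filtration lemma $\neg\vf$ is then satisfied in $M_0$, while the frame of $M_0$ is the subframe $F\restr X_0$, which validates $L$ precisely because $L$ is a subframe logic and $F$ is an $L$-frame. Thus $\vf$ is refuted on a finite $L$-frame.

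To build $X_0$ I would work in two layers. For the inter-cluster layer, grow $X_0$ recursively from $a_0$: whenever $a\in X_0$ and $M,a\mo\Di\psi$ for some $\Di\psi\in\Gamma$, use Corollary \ref{cor:max} to add a witness $b$ that is a \emph{maximal} element of $R(a)\cap\{c\mid \psi\in c\}$; this secures the selective-filtration condition for the one-step relation $R\restr X_0$, and since only finitely many $\Di\psi\in\Gamma$ must be witnessed at each point, the branching of the selected skeleton is finite. For the intra-cluster layer, for each selected cluster $C$ I would apply Proposition \ref{prop:sel:cluster} to the sets $\valext(\psi)\cap C$ (together with the finitely many inter-cluster witnesses landing in $C$), obtaining a subset of $C$ of size at most $(m+1)\lng{\vf}+O(1)$ that preserves all witnesses required inside $C$, since it preserves $R^{-1}[\valext(\psi)\cap C]$. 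Restricting $M$ to the union of these finite pieces keeps $M_0$ a selective filtration with every selected cluster finite.

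The main obstacle is the \emph{height} bound, i.e.\ showing the recursion terminates and the selected skeleton has finite depth. The clean transitive argument — a maximally chosen witness for $\Di\psi$ makes $\psi$ false everywhere strictly above it, so one subformula is consumed per level — does not transfer directly, because $R$ is only $m$-transitive: a one-step maximal witness in $R(a)$ need not control the whole strict future of $b$, as $aR^*c$ with $[a]_R\neq[c]_R$ fails to imply $aRc$ when $m>1$ (this is exactly why Proposition \ref{prop:Lm-diffClusters}, rather than the plain $\wK4$ property \eqref{eq:basicWk4}, is required). To overcome this I would measure depth by the future type $\Lambda(a)=\{\psi\in\Gamma\mid M,c\nvDash\psi\text{ for all }c\text{ with }aR^*c,\ [a]_R\neq[c]_R\}$, which is non-decreasing along strictly ascending chains; choosing witnesses that are maximal with respect to $R^*$ (via the Maximality lemma) forces $\Lambda$ to increase strictly at each strict step, bounding the depth by $\lng{\vf}$. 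The delicate point is reconciling these $R^*$-maximal ``landmarks'' with the one-step witnesses demanded by the selective-filtration condition: here I would invoke Proposition \ref{prop:sel:chain}, which shows that along a sufficiently long $R^*$-chain one has $R(a_i)\subseteq R(a_j)$ for some $i<j$, so redundant intermediate sources can be pruned, and Proposition \ref{prop:Lm-diffClusters}, which lets the one-step witnesses be absorbed into the bounded-height landmark structure. Combining the height, branching, and cluster-size bounds yields a finite $X_0$.
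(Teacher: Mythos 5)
Your overall architecture coincides with the paper's: refute $\vf$ at a point $a_0$ of the canonical model, build a finite selective filtration by choosing maximal one-step witnesses across clusters (Corollary \ref{cor:max}) and shrinking each selected cluster via Proposition \ref{prop:sel:cluster}, and then invoke the subframe property to keep the resulting finite frame an $L$-frame. The gap is exactly where you flag the ``delicate point'': the height bound. Your proposed measure $\Lambda(a)$ increases strictly only if the witness $b$ for $\Di\psi$ is maximal in $R^*(a)\cap\valext(\psi)$, i.e.\ among \emph{all} $\psi$-points in the future of $a$; but the selective-filtration condition forces $b$ to be an $R$-successor of $a$, and a point maximal merely in $R(a)\cap\valext(\psi)$ can have $\psi$-points strictly above it that lie outside $R(a)$. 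Concretely, for $m=2$ take $aRbRc$ with $a,b,c$ in three distinct clusters and $\neg(aRc)$; this is consistent with $R^3\subseteq R\cup Id$, Proposition \ref{prop:Lm-diffClusters} gives nothing for $aR^2c$, so $c$ may satisfy $\psi$ without violating the maximality of $b$ in $R(a)\cap\valext(\psi)$. Hence $\psi\notin\Lambda(b)$ and your depth measure can stall. The sentence about pruning via Proposition \ref{prop:sel:chain} and ``absorbing'' one-step witnesses names the right tool but is not an argument, and this is precisely the step where the whole proof lives.

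The paper resolves it differently: it keeps the one-step maximal witnesses $c(a,\psi)\in R(a)\cap\valext(\psi)$ and accepts that a single $\psi$ may fail to be ``consumed'' at a step, but shows it cannot fail more than $m$ times. Formally, a parent relation $\lhd=\bigcup_\psi\lhd_\psi$ on clusters is introduced, and any $\lhd$-chain of length exceeding $m|\Gamma|+1$ must, by pigeonhole, contain $m+1$ links labelled by the same $\psi$; the $m+1$ source points $a_m R^*\cdots R^* a_0$ lie in distinct clusters, so Proposition \ref{prop:sel:chain} yields $R(a_i)\subseteq R(a_j)$ for some $i<j$, whence $c(a_i,\psi)\in R(a_j)\cap\valext(\psi)$ sits strictly above $c(a_j,\psi)$, contradicting the maximality of $c(a_j,\psi)$ \emph{in $R(a_j)\cap\valext(\psi)$} --- no $R^*$-maximal landmarks are ever needed. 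This gives a height bound of $m|\Gamma|+1$ rather than your claimed $\lng{\vf}$. To complete your proof you would need to either carry out this pigeonhole argument or give a genuinely different termination proof; as written, the recursion is not shown to terminate.
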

\begin{proof}
Consider the canonical model $M=(X,R,\val)$ of $L$. Assume that $\vf$ is $L$-consistent. So we have $\vf\in a_0$ for some $a_0\in X$.

Recall that for $a\in X$, $[a]_R$ denotes the cluster of $a$.   
Let $\Phi$ be the set of subformulas of $\vf$, $\Gamma$ the set of subformulas of $\vf$ of form $\Di\psi$.
For $a\in X$, put
\begin{align*}
		\Gamma_a &= a\cap \Gamma, \\
		\Gamma_a^\sim &= \{\Di \psi \in \Gamma_a\mid  \exists b\in [a]_R \,(aRb\,\&\,\psi \in b)\},\\
		\Gamma_a^\uparrow &= \Gamma_a {\setminus} \Gamma_a^\sim.
	\end{align*}

%The following two choice functions are crucial for the construction. \ISh{In fact, they are not exactly choice functions}

For each $a\in X$,  $\Di \psi\in \Gamma_a^\uparrow$, fix a maximal point $c(a,\psi)$ in the set
$R(a)\cap \valext(\psi)$; such a point exists by Corollary  \ref{cor:max}.
By the definitions, 
\begin{equation}\label{eq:propOfMaxc}
(a,c(a,\psi))\in R  \text{ and } [a]_R  \neq [c(a,\psi)]_R.
\end{equation}

%Let $(\ff{X},\leq_R)$ be the skeleton of  $(X,R)$.  
Let $\ff{X}$ be the set of clusters in $(X,R)$.  
On $\ff{X}$, for $\psi\in\Phi$, define the {\em $\psi$-parent} relation $\lhd_\psi$:
put $C\lhd_\psi D$, if
$$\EE a\in C ( \Di\psi\in\Gamma_a^\uparrow \,\&\,c(a,\psi)\in D).$$
Put $$\lhd \; =\;\bigcup \{\lhd_\psi\mid \Di\psi\in\Gamma\}.$$
\hide{
Also, put
$\Phi(C)=\{\psi\in \Phi  \mid
\EE a\in C \EE D\in \clU_{n-1}  ( \Di\psi\in\Gamma_a^\uparrow\, \& \,c(a,\psi)\in C \,\& C<_R D)\}.$
} 
From \eqref{eq:propOfMaxc}, we have:
\begin{equation}\label{eq:propOflhd}
   \text{if $C\lhd D$, then $C\neq D$ and $\EE c\,\in C\, \EE d\in D\;  cRd $.}
\end{equation}

\begin{lemma}
Assume that for some clusters we have $C_n\lhd  C_{n-1}\lhd   \ldots \lhd  C_0$.  
Then $n\leq m|\Gamma|+1$. 
\end{lemma}
\begin{proof}
For the sake of contradiction, assume $n> m|\Gamma|+1$. 
Then for some $\psi\in\Phi$, for at least $m+1$ clusters $C_l$ in this chain we have $C_l\lhd_\psi C_{l-1}$.   
Fix $l_m>l_{m-1}>\ldots >l_0>0$ such that 
$C_{l_k}\lhd_\psi C_{l_k-1}$ for all $k\leq m$.
Then there are points  $a_m\in C_{l_m}, \ldots, a_0\in C_{l_0}$ such that 
\begin{equation}\label{eq:lemmaOnParet}
c(a_k,\psi)\in C_{l_k-1}
\end{equation} 
for all $k\leq m$. 
%IS: new
By \eqref{eq:propOflhd},  the clusters of  points $a_k$ form a strictly increasing chain 
in the skeleton of $(X,R)$. 
By Proposition \ref{prop:sel:chain}, $R(a_i)\subseteq  R(a_j)$ for some $i<j\leq m$. 
%\AK{Proposition \ref{prop:sel:chain} states the inverse inclusion: $R(a_j)\subseteq  R(a_i)$. It seems that here is OK and the Proposition should be changed.}
Hence, $c(a_i,\psi)\in R(a_j)$. On the other hand, by \eqref{eq:lemmaOnParet}, $c(a_j,\psi)R^* c(a_i,\psi)$, while  $c(a_i,\psi)$ and
$c(a_j,\psi)$ belong to different clusters; this contradicts the maximality of 
$c(a_j,\psi)$ in $R(a_j)\cap\valext(\psi)$.
\end{proof}

For each $C\in \ff{X}$, for each $\Di\psi\in \Gamma$, fix a finite set $U(C,\psi)\subseteq C\cap \valext(\psi)$ such that
\begin{equation}
%\{a\in C\mid \Di\psi\in a\}\subseteq R^{-1}[U(C,\psi)];
\{a\in C\mid \EE b\in C (aRb\,\&\,\psi \in b)\}\;\subseteq\; R^{-1}[U(C,\psi)];
\end{equation}
such a set exists by Proposition \ref{prop:sel:cluster}.
Put $$U(C)\;=\;\bigcup\{U(C,\psi)\mid  \Di\psi\in \Gamma\}.$$
We have: 
\begin{equation}\label{eq:Gamma-sim}
\text{
If $\Di\psi\in \Gamma_a^\sim$, then $\psi\in b$ for some $b\in U([a]_R)\cap R(a)$.
}
\end{equation}

\medskip 
 
Recursively, we define $Y_n\subseteq X$ and $\clU_n\subseteq  \ff{X}$. %; P and auxiliary functions 
Put
$$
\clU_0=\{[a_0]_R\}, \quad Y_0=\{a_0\}\cup U([a_0]_R).
$$ 
For $n>0$, put
\begin{eqnarray*}
% \nonumber % Remove numbering (before each equation)
\clU_n &= &\{[c(a,\psi)]_R\mid a\in Y_{n-1}\,\&\,\Di\psi\in\Gamma_a^\uparrow\},\\
Y_n &= &\{c(a,\psi)\mid a\in Y_{n-1}\,\&\,\Di\psi\in\Gamma_a^\uparrow\} \cup \bigcup\{U(C)\mid C\in \clU_n\}.
%Y_n &= &\{c(a,\psi)\mid a\in Y_{n-1}\,\&\,\Di\psi\in\Gamma_a^\uparrow\},\\
%Z_n&= & \{U(C)\mid C\in \clU_n\}
\end{eqnarray*}

By induction on $n$, we have:% for $n>0$: 
\begin{eqnarray}
\label{eq:sel:finiteZU}&&\text{$\clU_n$  and  $Y_n$ are finite;}\\
\label{eq:sel:inclusionYandU}&&Y_n\subseteq \bigcup\clU_n;\\
\label{eq:sel:select-sim}&&\text{If $a\in Y_{n}$ and $\Di\psi\in \Gamma_a^\sim$, then $\psi\in b$ for some $b\in Y_n\cap R(a)$;}\\
\label{eq:sel:select-up}&&\text{If $a\in Y_{n}$ and $\Di\psi\in \Gamma_a^\uparrow$, then $\psi\in b$ for some $b\in Y_{n+1}\cap R(a)$;}\\
\label{eq:sel:chain-step}&&
\AA C\in \clU_{n+1}\, \EE D \in \clU_{n} (D \lhd  C);\\
\label{eq:sel:chain}
&&\AA C\in \clU_n  \;([a_0]_R \, \lhd ^n \, C) .
\end{eqnarray}
Finiteness of $\clU_n$ and $Y_n$ is immediate from finiteness of $\Phi$
and $U(C)$ for each cluster $C$; \eqref{eq:sel:inclusionYandU} is straightforward from the definition of $Y_n$. 
To show  \eqref{eq:sel:select-sim}, 
assume that $a\in Y_{n}$ and $\Di\psi\in \Gamma_a^\sim$.  Then  
$[a]_R\in \clU_n$ by \eqref{eq:sel:inclusionYandU}, and so  $U([a]_R) \subseteq Y_n$. Now 
\eqref{eq:sel:select-sim} follows from \eqref{eq:Gamma-sim}.
The statement \eqref{eq:sel:select-up} is given by the first term in the definition of $Y_n$. 
From the definition of $\lhd_\psi$, we have \eqref{eq:sel:chain-step}, which, in turn, implies 
\eqref{eq:sel:chain}.

From the above lemma and \eqref{eq:sel:chain}, it follows that for some $n\leq m|\Gamma|+1$, the set $\clU_{n+1}$ is empty. By \eqref{eq:sel:inclusionYandU},  $Y_{n+1}$ is empty.
From \eqref{eq:sel:select-sim} and \eqref{eq:sel:select-up}, the restriction  $M_0$ of $M$ to $\bigcup_{i\leq n}Y_n$ is a selective filtration of $M$ through $\Phi$. Hence, 
$M_0,a_0\mo \vf$ by Selective filtration lemma. By \eqref{eq:sel:finiteZU}, $M_0$ is finite. 
\IShLater{$M_0=M\restr X_n$ is undefined}

\hide{ 
For $n>0$, we have
\begin{equation}
\AA C\in \clU_n \EE a\in C \EE D\in \clU_{n-1}  \EE \psi \in \Phi ( \Di\psi\in\Gamma_a^\uparrow\, \& \,c(a,\psi)\in C\& C<_R D).
\end{equation}
For $C\in \clU_n$, $n>0$, let 
\begin{equation}
\Phi(C)=\{\psi\in \Phi  \mid 
\EE a\in C \EE D\in \clU_{n-1}  ( \Di\psi\in\Gamma_a^\uparrow\, \& \,c(a,\psi)\in C \,\& C<_R D)\}.
\end{equation}

$\clU={\bigcup{n<\omega}}\clU_n$. 
On $\clU$, define the {\em child} relation $\lhd$: 
$C\prec D$, if for some $n>0$, $D\in \clU_n$, $D\in \clU_{n-1}$, and  
$\EE a\in C \EE \psi \in \Phi ( \Di\psi\in\Gamma_a^\uparrow \,\&\,c(a,\psi)\in C)$
  }

Finally, $M_0$ is based on an $L$-frame, since $L$ is subframe.    
\end{proof}
\IShLater{Add a remark about $k$-canonical  logics}

\IShLater{think out: 
 
Selective filtration is perhaps the simplest argument for the finite model property of $\wK4$.
Also, it gives a counter model of height bounded by a polynomial in the length of the formula.
However, unlike epifiltrations, it does not imply the finite model property for the derived logics.
}

\bigskip

%\subsection{tmp: Andrey}
%\input{filtr-selective}

\section{Problems}

\paragraph*{Definable filtrations}

If the equivalence $\sim$ in Definition \ref{def:epi} of filtration is induced by a set of formulas, that is  $\sim\; =\; \sim_\Delta$ for some   $\Delta$,
then the filtration $\ff{M}$ is called \emph{definable}.
Definable filtrations give more general transfer results than filtrations by means of arbitrary equivalence.
In particular, logics that admit definable filtration can be used to construct decidable extensions of Propositional Dynamic Logic
\cite{KikotShapZolAiml2020}, \cite{ExtCPDL-RogShap2022}.
In Theorems \ref{thm:wK4AF} and \ref{thm:ADF-Lm}, the equivalence relations were defined semantically.

\smallskip

\begin{problem}
For $m>0$, do logics $\Lm$ admit definable filtration?
\end{problem}

\paragraph{Filtrations via local tabularity and closure conditions}

Local tabularity of $\Lm$-clusters was crucial for building filtrations on $\Lm$-frames.
Another component was existence of the corresponding closure.

We say that a class $\clF$ is {\em closable}, if
for any frame $(X,R)$ there exists the smallest relation $R^{\clF}$ containing $R$ such that $(X,R^{\clF})\in \clF$.

Many examples of such classes are given by universal Horn sentences.

\smallskip

\begin{problem}
Let $\clF$ be a modally definable class of frames,\IShLater{Define in more exact form}
$\clC$ the class of clusters occurring in frames in $\clF$. Assume that $\Log(\clC)$ is locally tabular and $\clF$ is closable.
Does $\clF$ admit filtration?
\end{problem}

\paragraph{Subframe $m$-transitive logics}
There are weaker  than $\Lm$  subframe pretransitive logics. For example, consider the property
$$
\AA x_0 \, x_1\,  x_2\, x_3 (x_0 R x_1 R x_2 R x_3 \to x_0=x_3\vee x_0 R x_2 \vee x_1 R x_3 \vee x_0 R x_3).
$$
Then the class of such frames is 2-transitive and is closed under taking substructures. The finite model property of the logic of such frames is  unknown.  Very recently, the finite model property was announced for a family of pretransitive subframe logics that are weaker than $\Lm$ \cite{Dvorkin}. 

\IShLater{In fact, all pretransitive subframe have a similar property; can be written down as a modal formula; I do not think we have time for this now, but perhaps in the next iteration...}

\smallskip

\begin{problem}
Let $\clF$  be a class of $m$-transitive frames closed under taking subframes. Does the logic of $\clF$ have the finite model property?
\end{problem} 

%added May 2

\paragraph{Complexity}
It is immediate that all logics $\Lm$ are $\PSpace$-hard, since they have $\LS{4}$ as a fragment, and $\LS{4}$ is $\PSpace$-hard \cite{Ladner77}.
For the logic $\wK4$ ($m=1$), the $\PSpace$ upper bound was established in \cite{CondSatOnline}.

\smallskip

\begin{problem}
What is the complexity of the logics $\Lm$ for $m>1$?
\end{problem}
We conjecture that the construction given in Theorem \ref{thm:selective} leads to $\PSpace$ upper bound for all $m$.

\IShLater{

\paragraph*{Selective filtrations: transfer results}

Selective filtration is perhaps the simplest argument for the finite model property of $\wK4$.
Also, it gives a counter model of height bounded by a polynomial in the length of the formula.
However, unlike epifiltrations, it does not imply the finite model property for the derived logics.

  Problem: Transfer results.
}

\bibliographystyle{amsalpha}
\bibliography{wk4}

%\newpage
%\input{storeroom}

\end{document}